\newtheorem{theo}{Theorem}[section]
\newtheorem{coro}[theo]{Corollary}
\newtheorem{lemma}[theo]{Lemma}
\newtheorem{propo}[theo]{Proposition}
\newtheorem{defn}[theo]{Definition}
\newtheorem{rk}[theo]{Remark}
\newcommand{\1}{1\hspace{-.55ex}\mbox{l}}
\newcommand{\R}{\mathbb{R}}
\numberwithin{equation}{section}
\newcommand{\osc}{\mathop{{\rm osc}}}
\begin{document}
\title[Self-interacting diffusions: convergence in distribution]{Convergence in distribution of some 
self-interacting diffusions: the simulated annealing method}
\author{S\'ebastien CHAMBEU and Aline KURTZMANN}
\thanks{A.K. is partially supported by the Swiss National Science Foundation grant
200020-112316/1}

\maketitle

\begin{abstract}
We study some self-interacting diffusions living on $\mathbb{R}^d$ solutions to: $$\mathrm{d}X_t = \mathrm{d}B_t - g(t)\nabla V(X_t - \overline{\mu}_t) \mathrm{d}t$$ where $\overline{\mu}_t$ is the empirical mean of the process $X$, $V$ is an asymptotically strictly convex potential and $g$ is a given function, not increasing too fast to the infinity or constant. The authors have already proved that the ergodic behavior of $X$ is strongly related to $g$. We go further and, using the simulated annealing method, we give some conditions for the convergence in distribution of $X$  toward $X_\infty$ (which law is related to the global minima of $V$). We also investigate the case $g(t)= 1$.
\end{abstract}

\section{Introduction}
In \cite{CK}, the authors have obtained some conditions for both the pointwise
ergodicity and the almost sure convergence of some
self-interacting diffusions. We will go further in the study of
such processes. The aim of this paper is to obtain some conditions first, for the convergence in probability, and second, the convergence in distribution of the self-interacting diffusion $X$ defined by
\begin{equation} \label{equX1}
\mathrm{d}X_t = \mathrm{d}B_t - g(t)\nabla V(X_t -
\overline{\mu}_t) \mathrm{d}t,\,\ X_0 = x
\end{equation}
where $B$ is a standard Brownian motion and $\overline{\mu}_t$
denotes the empirical mean of $X$: 
\begin{eqnarray}\label{defmu}
\overline{\mu}_t = \frac{1}{r+t}\left(r\bar{\mu} + \int_{0}^{t}
X_{s} \mathrm{d}s\right),\,\ \overline{\mu}_0 = \overline{\mu}.
\end{eqnarray}
Here $\mu $ is an initial probability measure on
$\mathbb{R}^d$, $\bar{\mu}$ denotes the mean of $\mu$ and $r>0$ is
an initial weight.

This paper deals with the well-known theory of simulated annealing,
which has been developed since the 80's. For physical systems, an important
question is to find the globally minimum
energy states of the system. Experimentally, the ground states are
reached by a procedure, called the chemical annealing. Let us explain the procedure. One first melts a
substance and then cools it slowly enough to pass 
through the freezing temperature. If the temperature decreases too fast, then the system does not end up into a ground state, but in a local (but not global) minimum. On the other hand, if the
temperature decreases too slowly, then the system approaches the
ground states very slowly. The competition between these two
effects determines the optimal speed of cooling, that is the
annealing schedule.

The study of the simulated annealing involves the
theory of (homogeneous and) non-homogeneous Markov chains and
diffusion processes, large deviation theory, spectral analysis of
operator and singular perturbation theory. Pioneer work has been done by Freidlin and Wentzell \cite{FW}. The
initial problem consists in finding the global minima of a 
function $U$. Indeed, one has to study the Markov
process $X^\varepsilon$ in $\mathbb{R}^d$ given by the
Langevin-type Markov diffusion (we emphasize that
$\varepsilon = \varepsilon(t)$)
\begin{equation}\label{edsrecuit}
\mathrm{d}X^\varepsilon_t = \varepsilon \mathrm{d}B_t - \nabla
U(X^\varepsilon) \mathrm{d}t.
\end{equation}
$X^\varepsilon$ may be considered as a perturbation of the
trajectory $X^0$ of the dynamical system
$\frac{\mathrm{d}X_t^0}{\mathrm{d}t} = -\nabla U(X_t^0)$. Let us explain briefly the model. If the temperature $\varepsilon$ is almost constant for
a sufficiently large amount of time, then the process
$X^\varepsilon$ and the fixed temperature process behave
approximatively the same at the end of that time-interval. Denote by $min$ the set of all the global minima of $U$. The
optimal annealing schedule (that is, $\varepsilon$), for the
convergence criterion $\mathbb{P}_x(X^\varepsilon_t \in min)\rightarrow 1$ as $t$ goes to $+\infty$, was first
determined by Hajek \cite{haj} for a finite state space. Later, Chiang,
Hwang and Sheu \cite{CHS} studied the convergence rate of the latter probability via the large
deviations of the transition density of $X^\varepsilon$. This rate is
actually strongly related to the spectral gap of the invariant
measure of $X^\varepsilon$.

Note, that Chiang, Wang and Sheu were one of the firsts to show the
convergence of the algorithm of the simulated annealing, in the case
$\varepsilon(t)^2 = k/\log t$ for $k$ large enough. Later, Royer
\cite{R} obtained the same result for $k>\Lambda$, where $\Lambda$
is related to the second eigenvalue
$\lambda_2^\varepsilon$ of the corresponding infinitesimal generator. 
Moreover, Hwang and Sheu \cite{H2} established (by probabilistic methods) the existence of $\Lambda
:= \underset{\varepsilon \rightarrow 0}{\lim} - \varepsilon^2 \log
\lambda_2^\varepsilon$. Finally, Holley and Stroock \cite{hs_rec}
initiated an other method and proved, in the discrete case, the
convergence of the simulated annealing algorithm via Sobolev's
inequality. They went further in their study with Kusuoka
\cite{HKS}. After that, Miclo \cite{M} proved, by using some functional
inequalities, that the free energy (that is the relative entropy
of the distribution of the process at time $t$ with respect to the
invariant probability at that time $t$) satisfies a differential
inequality, implying (under some decreasing evolution of the
temperature to zero) the convergence of the process to the global
minima of the potential. 

A natural question arises: what happens if the temperature, that
is $\varepsilon$, decreases too fast to zero? Then, the potential
can freeze in a local minimum (the ``choice" of this minimum
depends on the initial condition) and therefore the process
converges to this local minimum. We won't consider this case here.
\medskip

First, we study the process $Y_t := X_t -\bar{\mu}_t$, which satisfies the following SDE
\begin{equation}\label{equYV}
\left\{%
\begin{array}{ll}
    \mathrm{d}Y_{t} = \mathrm{d}B_{t} - g(t) \nabla V(Y_{t}) \mathrm{d}t - Y_t\frac{\mathrm{d}t}{r+t};\, Y_{0} = x - \overline{\mu};\\
    \mathrm{d}\overline{\mu}_t = Y_t \frac{\mathrm{d}t}{r+t}.
\end{array}%
\right.
\end{equation}
We will adapt the simulated annealing method to $Y$. We will prove that, depending on $g$, either the process $Y$ converges in distribution (and not in probability) towards a variable which is concentrated on the global minima of $V$ or converges in probability to a random variable, which support is $\R^d$. Suppose that $V=W+\chi$, where $W$ is strictly convex and $\chi$ is a compactly supported function. Define $\osc (\chi) := \sup\limits_x \chi(x) - \inf\limits_x \chi(x)$. Denote by $(m_i)_{1\le i\le n}$, respectively $(M_i)_{1\le i\le p}$, the local minima, respectively maxima and saddle points of $V$. We have proved in \cite{CK}, that there exist some nonnegative constants $a_i$ such that $\sum_i a_i =1$ and for all continuous bounded $f$: $\frac{1}{t}\int_0^t f(Y_s) \mathrm{d}s \xrightarrow {a.s.} \sum a_i f(m_i)$. 
Here are the main results of the present work, corresponding to two classes of functions $g$. First, we prove, for a logarithmic $g$, that $X$ converges under a condition of symmetry on the critical points of $V$:
\begin{theo}
Suppose $\infty>\underset{t\rightarrow \infty}{\lim} g(t)^{-1} \log G(t) = k>\max\{2\osc(\chi),d/4\}$. Then the process $X$ converges in distribution to $Y_\infty + \int_0^\infty Y_s \frac{\mathrm{d}s}{r+s}$ if and only if $V$ is such that $\underset{1\leq i \leq p}{\sum} a_i m_i =0$.
\end{theo}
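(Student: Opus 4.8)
The plan is to study $X$ through the decomposition $X_t = Y_t + \overline{\mu}_t$: by \eqref{equYV} the empirical mean is the ``slow'' variable $\overline{\mu}_t = \overline{\mu} + \int_0^t Y_s\,\tfrac{\mathrm{d}s}{r+s}$, so the convergence in distribution of $X$ will be decided by the behaviour of $\overline{\mu}_t$. Under the hypothesis $k>\max\{2\osc(\chi),d/4\}$ the simulated‑annealing analysis applies to $Y$: $Y_t$ converges in distribution to $Y_\infty$, the family $(Y_t)_{t\ge 0}$ is tight, and from the asymptotic strict convexity of $V$ (apply It\^o's formula to $e^{\delta|Y_t|^2}$ for $\delta$ small) one gets $\sup_t\mathbb{E}[e^{\delta|Y_t|^2}]<\infty$, hence $\sup_t\mathbb{E}[|Y_t|^p]<\infty$ for every $p$. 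I would record these facts first, as they are used repeatedly.

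For the necessity of the condition I argue by contraposition: suppose $m_\ast:=\sum_i a_i m_i\neq 0$. The almost sure ergodic theorem of \cite{CK}, stated for bounded continuous $f$, extends to the coordinate maps $f(y)=y^{(j)}$ by a routine truncation based on the uniform moment bound above, so $\frac1t\int_0^t Y_s\,\mathrm{d}s\to m_\ast$ a.s. Setting $A_t:=\int_0^t Y_s\,\mathrm{d}s$ and integrating by parts, $\int_0^t Y_s\tfrac{\mathrm{d}s}{r+s}=\tfrac{A_t}{r+t}+\int_0^t\tfrac{A_s}{(r+s)^2}\mathrm{d}s$; since $A_t=m_\ast t+o(t)$ a.s., a Kronecker‑type estimate gives $\overline{\mu}_t/\log(r+t)\to m_\ast$ a.s., hence $\|\overline{\mu}_t\|\to\infty$. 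As $\sup_t\mathbb{E}[|Y_t|]<\infty$, for each $\varepsilon>0$ Markov's inequality gives $R_\varepsilon$ with $\mathbb{P}(\|Y_t\|>R_\varepsilon)<\varepsilon$ for all $t$; then $\|X_t\|\ge\|\overline{\mu}_t\|-\|Y_t\|$ yields $\limsup_t\mathbb{P}(\|X_t\|\le L)\le\varepsilon$ for every $L$, so $(X_t)$ is not tight and cannot converge in distribution.

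For the sufficiency, assume $m_\ast=0$. The key step, and the main obstacle, is to show that $\int_0^\infty Y_s\tfrac{\mathrm{d}s}{r+s}$ converges, say in $L^2$ and almost surely; we then set $\overline{\mu}_\infty:=\overline{\mu}+\int_0^\infty Y_s\tfrac{\mathrm{d}s}{r+s}$. Here I would use the quantitative side of the annealing analysis of $Y$: on each interval on which $g$ is essentially constant $=g(t)$, the frozen dynamics is reversible with invariant density $\propto e^{-2g(t)V}$, whose spectral gap $\lambda_{g(t)}$ decays no faster than $\exp(-c_\ast g(t))$ with $c_\ast\le 2\osc(\chi)$; the condition $k>2\osc(\chi)$ forces the relaxation time $1/\lambda_{g(t)}$ to be a power $t^{\theta}$ with $\theta<1$, while $k>d/4$ supplies the $L^2$/entropy control comparing $\mathrm{Law}(Y_t)$ to this instantaneous equilibrium. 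From this one extracts: a covariance bound showing $|\mathbb{E}\langle Y_u,Y_v\rangle-\langle\mathbb{E}Y_u,\mathbb{E}Y_v\rangle|$ is negligible once $|u-v|\gg u^{\theta}$, whence $\mathbb{E}\big\|\int_T^{T'}Y_s\tfrac{\mathrm{d}s}{r+s}\big\|^2\to 0$ as $T,T'\to\infty$; and the identity $\int\nabla V\,\mathrm{d}\mu_g=-\tfrac1{2g}\int\nabla(e^{-2gV})=0$, which kills the leading contribution of the drift and lets one control $\|\mathbb{E}[Y_s]\|$ and thereby show that $\int_0^\infty\mathbb{E}[Y_s]\tfrac{\mathrm{d}s}{r+s}$ converges exactly under the symmetry $m_\ast=0$. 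A Borel--Cantelli/maximal‑inequality argument then upgrades $L^2$ convergence of the weighted integral to almost sure convergence.

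It remains to identify the distributional limit of $X_t=Y_t+\overline{\mu}_t$. Writing $\overline{\mu}_t=\overline{\mu}_\infty-\int_t^\infty Y_s\tfrac{\mathrm{d}s}{r+s}$ and bounding the remainder in $L^1$, it suffices to understand the joint law of $(Y_t,\overline{\mu}_\infty)$. Conditioning on $\mathcal{F}_{t/2}$: the tail $\int_{t/2}^\infty Y_s\tfrac{\mathrm{d}s}{r+s}$ tends to $0$ in $L^2$, so $\overline{\mu}_\infty$ is, up to a vanishing error, $\mathcal{F}_{t/2}$‑measurable; whereas, the relaxation time of $Y$ at time $t$ being $o(t)$, the conditional law of $Y_t$ given $\mathcal{F}_{t/2}$ is within $o(1)$ of the instantaneous equilibrium and hence asymptotically independent of $\mathcal{F}_{t/2}$. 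Thus $(Y_t,\overline{\mu}_\infty)$ converges in law to an independent pair distributed as $(Y_\infty,\overline{\mu}_\infty)$, and $X_t\xrightarrow{d}Y_\infty+\int_0^\infty Y_s\tfrac{\mathrm{d}s}{r+s}$, as announced. The delicate point throughout is the sufficiency step: extracting from the time‑inhomogeneous annealing dynamics decay rates just strong enough — under the stated lower bound on $k$ — for the logarithmically weighted integral $\int_0^\infty Y_s\tfrac{\mathrm{d}s}{r+s}$ to converge.
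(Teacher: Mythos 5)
Your overall architecture coincides with the paper's: both arguments write $X_t=Y_t+\overline{\mu}_t$, reduce everything to the convergence or divergence of $\int_0^t Y_s\frac{\mathrm{d}s}{r+s}$, and obtain the divergence half from $\frac{1}{t}\int_0^t Y_s\,\mathrm{d}s\to\sum_i a_im_i\neq 0$, which forces $\overline{\mu}_t$ to grow like $(\sum_i a_im_i)\log t$. The two technical steps, however, are carried out by genuinely different means. For the rate needed to make the logarithmically weighted integral converge, the paper does not use spectral estimates at all: it invokes Theorem 1 of Bena\"im and Schreiber \cite{besch}, by which the normalized occupation measure of the time-changed process inherits the speed $G^{-1}(t+1)-G^{-1}(t)$ of the pseudotrajectory, and then integrates by parts after the time change; you instead propose extracting a covariance-decay bound from the instantaneous relaxation time $t^{2\osc(\chi)/k}$, which is heavier machinery but yields an explicit $L^2$-Cauchy estimate for $\int_T^{T'}Y_s\frac{\mathrm{d}s}{r+s}$. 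For the identification of the limit, the paper concludes with Slutsky's theorem applied to $U_t=\overline{\mu}_t$ and $V_t=X_t$ --- a step which, as written, would require $|X_t-\overline{\mu}_t|=|Y_t|\to 0$ in probability, which is false since $Y_t$ converges in law to the nondegenerate $\Pi_0$; your conditioning argument giving asymptotic independence of $Y_t$ and $\overline{\mu}_\infty$ is the more defensible way to obtain the joint convergence of $(Y_t,\overline{\mu}_t)$, and is a genuine improvement on the paper at that point.

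One caveat on your sufficiency step: the convergence of the deterministic part $\int_0^\infty\mathbb{E}[Y_s]\frac{\mathrm{d}s}{r+s}$ does not follow from the symmetry $\sum_i a_im_i=0$ alone. You need the bias $\|\mathbb{E}[Y_s]\|$ to be integrable against $\mathrm{d}s/(r+s)$, and the natural estimate coming from the Laplace expansion of $\Pi_{t,\varepsilon(t)}$ around $\Pi_0$ only gives $O(\varepsilon^2)=O(1/\log s)$, which is borderline non-integrable against $\mathrm{d}s/s$. This is the genuinely delicate point of the whole theorem; the paper's treatment of it (via the pseudotrajectory rate) is no more detailed than yours, but you should not present it as an automatic consequence of the symmetry condition.
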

Second, we show that, for a constant $g$, $X$ converges under a condition of symmetry on $V$:
\begin{theo}
Suppose that $\underset{t\rightarrow \infty}{\lim} g(t) = 1$. Then $\overline{\mu}_t$ converges in probability and $X_t$ converges in probability to $\overline{\mu}_\infty+ Y_\infty$, where $Y_\infty$ has the normalized distribution density  $e^{-2V(x)}/Z$, if and only if $\int x e^{-2V(x)} \mathrm{d}x = 0$. Else $X$ diverges.
\end{theo}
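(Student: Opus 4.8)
The plan is to reduce the statement to the asymptotics of the empirical mean, which by \eqref{equYV} equals $\overline{\mu}_t = \overline{\mu}_0 + \int_0^t Y_s\,\tfrac{\mathrm{d}s}{r+s}$. Write $L := \tfrac12\Delta - \nabla V\cdot\nabla$ for the generator of the autonomous diffusion $\mathrm{d}Z_t = \mathrm{d}B_t - \nabla V(Z_t)\,\mathrm{d}t$, whose invariant probability is $\pi(\mathrm{d}x) = Z^{-1}e^{-2V(x)}\,\mathrm{d}x$, and set $\bar m := \int x\,\pi(\mathrm{d}x)$, so that the condition $\int x e^{-2V(x)}\,\mathrm{d}x = 0$ reads $\bar m = 0$. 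Since $g(t)\to 1$ and $-Y_t/(r+t)$ is a drift vanishing as $t\to\infty$, $Y$ is an asymptotically autonomous perturbation of $Z$; combining the asymptotic strict convexity of $V$ — which provides a Lyapunov function and a Harris/coupling contraction outside a compact set — with the results of the preceding sections and of \cite{CK}, one has $Y_t\Rightarrow Y_\infty\sim\pi$ and $\sup_{t\ge 0}\mathbb{E}[\,|Y_t|^p\,]<\infty$ for every $p$. It thus remains to decide whether $\int_0^t Y_s\,\tfrac{\mathrm{d}s}{r+s}$ converges, and the natural dichotomy is $\bar m = 0$ versus $\bar m\neq 0$.

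The main tool is a Poisson equation for $L$. Since $x-\bar m$ is centred under $\pi$, there is a vector field $v\colon\R^d\to\R^d$ with $Lv = -(x-\bar m)$ — e.g.\ $v = \int_0^\infty P_t(\mathrm{id}-\bar m)\,\mathrm{d}t$, $(P_t)_{t\ge 0}$ being the semigroup of $Z$ — and the spectral gap of $L$ together with the gradient estimates for $P_t$, both coming from the asymptotic convexity of $V$, yield that $v$ has at most linear growth and that $\nabla v$ is bounded. Applying Itô's formula to $v(Y_t)$ — the generator of $Y$ at time $s$ being $L^{(s)} = L + (1-g(s))\nabla V\cdot\nabla - (r+s)^{-1}\,y\cdot\nabla$ — and solving for $S_t := \int_0^t Y_s\,\mathrm{d}s$, one gets
\begin{align*}
S_t &= \bar m\,t + v(Y_0) - v(Y_t) + \int_0^t (1-g(s))\,\nabla V(Y_s)\cdot\nabla v(Y_s)\,\mathrm{d}s \\
&\quad {}- \int_0^t\frac{(Y_s\cdot\nabla)v(Y_s)}{r+s}\,\mathrm{d}s + N_t ,
\end{align*}
with $N_t := \int_0^t\nabla v(Y_s)\,\mathrm{d}B_s$ a martingale satisfying $\langle N\rangle_t = O(t)$. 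Using the uniform moments of $Y$, the boundedness of $\nabla v$, $\mathbb{E}|N_t| = O(\sqrt{t})$ and $\int_0^t|1-g(s)|\,\mathrm{d}s = o(t)$, one checks that $\|S_t-\bar m\,t\|_{L^1} = o(t)$, and, piece by piece, that each error term is integrable against $(r+s)^{-2}\,\mathrm{d}s$: $v(Y_0)-v(Y_s)$ is $O(1)$ in $L^1$; $\mathbb{E}|N_s| = O(\sqrt{s})$ with $\int^\infty(r+s)^{-2}\sqrt{s}\,\mathrm{d}s<\infty$; the $(r+s)^{-1}$-term has $L^1$-norm $O(\log s)$ with $\int^\infty(r+s)^{-2}\log s\,\mathrm{d}s<\infty$; and — provided $g\to 1$ fast enough, say $\int^\infty|g(s)-1|\,s^{-1}\,\mathrm{d}s<\infty$, which is vacuous when $g\equiv 1$ — the $(1-g)$-term as well, after a Fubini exchange.

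An integration by parts now gives
\begin{equation*}
\int_0^t Y_s\,\frac{\mathrm{d}s}{r+s} = \frac{S_t}{r+t} + \int_0^t\frac{S_s}{(r+s)^2}\,\mathrm{d}s ,
\end{equation*}
into which we substitute $S_s = \bar m\,s + (S_s-\bar m\,s)$ and use $\tfrac{t}{r+t} + \int_0^t\tfrac{s}{(r+s)^2}\,\mathrm{d}s = \log(r+t) + O(1)$. If $\bar m\neq 0$, the leading term $\bar m\,\log(r+t)$ has norm $\to\infty$, while the remaining contributions are $o(\log t)$ in $L^1$ (since $\|S_s-\bar m\,s\|_{L^1} = o(s)$ becomes $o(1/s)$ after division by $(r+s)^2$) and $(S_t-\bar m\,t)/(r+t)\to 0$; hence $|\overline{\mu}_t|\to\infty$, and as $Y_t$ is tight, $|X_t| \ge |\overline{\mu}_t| - |Y_t|\to\infty$, i.e.\ $X$ diverges. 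If $\bar m = 0$, the leading term disappears, $S_t/(r+t)\to 0$ in probability, and $\int_0^\infty\frac{S_s}{(r+s)^2}\,\mathrm{d}s$ converges absolutely in $L^1$ by the summability of the four pieces; hence $\overline{\mu}_t\to\overline{\mu}_\infty := \overline{\mu}_0 + \int_0^\infty Y_s\,\tfrac{\mathrm{d}s}{r+s}$ in probability (indeed a.s.), and $X_t = Y_t+\overline{\mu}_t$ converges in distribution to $Y_\infty+\overline{\mu}_\infty$. The two cases yield the two implications.

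The main obstacle, I expect, lies upstream of this computation: establishing the behaviour of $Y$ itself — convergence in distribution to $\pi$ and the uniform moment bounds — for the non-autonomous, not-globally-convex equation \eqref{equYV}; this is exactly where the splitting $V = W+\chi$ and the attendant Lyapunov/coupling estimates do their work, and it is shared with the earlier sections and with \cite{CK}. A close second is the construction and a priori control of the Poisson solution $v$ on all of $\R^d$ (linear growth, bounded gradient), which again rest on a Poincaré inequality for $\pi$ and on gradient bounds for $P_t$ under asymptotic convexity. Once these are in hand, the remaining work — the bounds on $N_t$, on the $(1-g)$-term and on the $(r+s)^{-1}$-term, and the two integrations by parts — is routine, the one genuine subtlety being to pin down how fast $g$ must tend to $1$.
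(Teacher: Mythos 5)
Your proposal is correct in outline and reaches the dichotomy by the same outer reduction as the paper --- write $\overline{\mu}_t = \overline{\mu}_0 + \int_0^t Y_s\,\frac{\mathrm{d}s}{r+s}$, integrate by parts against $(r+s)^{-1}$, and let everything hinge on whether $S_t:=\int_0^t Y_s\,\mathrm{d}s$ equals $\bar m\,t$ plus an error whose weighted tail is integrable --- but the engine you use for that central estimate is genuinely different. The paper compares $Y$ on $[T,\infty)$ with the autonomous Kolmogorov process $Z^T$ started at $Y_T$, obtains $\mathbb{E}(Y^T_v-Z^T_v)^2=O(1/T)$ by a convexity/Gronwall computation, invokes the ergodic (limit-quotient) theorem for $Z$, and then upgrades $\frac{1}{t}\int_0^t Y_s\,\mathrm{d}s\to\overline{\gamma}$ through an asymptotic-pseudotrajectory argument for $n_t=m_{e^t}$ (its Lemmas 5.1--5.2 and Proposition 5.3). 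You instead solve the Poisson equation $Lv=-(x-\bar m)$ and extract $S_t-\bar m\,t$ from a martingale decomposition, which buys an explicit quantitative rate ($O(\sqrt t)$ in $L^1$) and makes the integrability of $\int^\infty |S_s-\bar m\,s|(r+s)^{-2}\,\mathrm{d}s$, hence the a.s.\ convergence of $\overline{\mu}_t$ when $\bar m=0$ and the $\bar m\log(r+t)$ divergence otherwise, entirely mechanical; it also accommodates $g\to 1$ (not just $g\equiv 1$) under an explicit Dini-type condition, which the paper does not address. The price is that all the difficulty is pushed into the a priori control of the corrector $v$ (linear growth, bounded gradient) for a non-globally-convex $V$, which the paper's coupling route avoids.

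Two caveats, neither fatal to the architecture. First, you only claim $Y_t\Rightarrow Y_\infty$ and accordingly conclude convergence \emph{in distribution} of $X_t$, whereas the theorem asserts convergence \emph{in probability}; to get the stronger mode you need $Y_t\to Y_\infty$ in probability for a fixed limiting random variable, which is exactly what the paper's Lemma 5.1 supplies by the $L^2$ comparison of $Y_t$ with a \emph{stationary} copy of $Z$ --- this should be made explicit rather than absorbed into ``the results of the preceding sections''. Second, the bounded-gradient estimate for $v=\int_0^\infty P_t(\mathrm{id}-\bar m)\,\mathrm{d}t$ is asserted, not proved: the Poincar\'e inequality for $\pi$ (via Holley--Stroock perturbation) gives $L^2$ convergence of the integral, but the uniform bound on $\nabla v$ requires a commutation or reflection-coupling argument exploiting $V=W+\chi$; you correctly identify this as the main technical debt of your route.
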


The paper is organized in the following way. In Section 2, we recall the notations and some results of \cite{CK}. Section 3 is devoted to the study of the process $Y$. In particular, we prove the convergence in distribution of $Y$ towards global minima thanks to the simulated annealing method. Afterwards, we deduce in Section 4 some conditions for the convergence of the self-interacting process $X$. Finally, we will study the constant case $g\equiv 1$ in Section 5.

\section{Notation, hypothesis and former results} 
We denote by $G$ the function $G(t) := \int_0^t g(s) \mathrm{d}s$ and $G^{-1}$ is its generalized inverse: $G^{-1}(t) := \inf \{u\ge 0; G(u) \ge t\}$. In the whole
following, $(\cdot,\cdot)$ denotes the Euclidian scalar product.
We denote by $\mathcal{P}(\mathbb{R}^d)$ the set of probability measures on $\mathbb{R}^d$.

In the sequel, we suppose that $V:
\mathbb{R}^d\rightarrow \mathbb{R}_+$ is such that:
\begin{enumerate}
    \item (\textit{regularity and positivity}) $V\in \mathcal{C}^2(\mathbb{R}^d)$ and $V\geq 0$;
    \item (\textit{convexity}) $V= W+\chi$ where $\chi$ is a compactly supported function and there exists $c>0$ such that $\nabla^2 W(x) \geq c Id$ and $\nabla \chi$ is Lipschitz (with the constant $\tilde{C}>0$);
    \item (\textit{growth}) there exist $a,b>0$ such that for all $x\in \mathbb{R}^d$, we have 
    \begin{equation}\label{growth}
    \Delta V(x)\leq a+bV(x) \,\, \text{and} \,\, \lim_{|x|\rightarrow \infty} \frac{|\nabla V(x)|^2}{V(x)} = \infty.
    \end{equation}
\end{enumerate}
We also assume that $V$ has a finite number of critical points.
Let $Max =\{M_{1}, M_{2},\cdots, M_{p}\}$ be the set of saddle
points and local maxima of $V$ and $Min=\{m_{1}, m_{2},\cdots,
m_{n}\} $ be the set of the local minima of $V$. We assume that
$\forall i$, $\forall \xi \in \mathbb{R}^d$, $(\nabla^2
V(m_i)\xi,\xi)>0$ and for all $M_i$, $\nabla^2 V(M_i)$ admits a negative eigenvalue.

We also assume that the application $g: \mathbb{R}_+ \rightarrow
\mathbb{R}_+$ belongs to $\mathcal{C}^1(\mathbb{R}_+)$ and, without any loss of generality, that $g(0)>0$. In the following, we will consider the cases $g(t) = k\log t$ and $g(t) =1$.
\begin{rk}
If $\underset{t\rightarrow \infty}{\lim} g(t) =\infty$, then for all $T>0$, we have that $G^{-1}(t+T) - G^{-1}(t) \underset{t\rightarrow \infty}{\longrightarrow} 0$.
\end{rk}

We have already shown that the SDE \eqref{equX1} studied admits a
unique global strong solution:
\begin{propo}(\cite{CK} proposition 3.3)
For any $x\in \mathbb{R}^d$, $\mu\in\mathcal{P}(\mathbb{R}^d)$ and
$r>0$, there exists a unique global strong solution $(X_t, t\geq
0)$ of \eqref{equX1}.
\end{propo}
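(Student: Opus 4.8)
The plan is to eliminate the path-dependence by passing to the auxiliary process $Y_t := X_t - \overline{\mu}_t$, which turns \eqref{equX1} into an honest (time-inhomogeneous) Markovian SDE, and then to establish existence, uniqueness and non-explosion for that equation by a localisation-plus-Lyapunov argument exploiting the convexity built into hypothesis (2). First I would differentiate \eqref{defmu}: writing $\overline{\mu}_t = (r+t)^{-1}(r\bar{\mu} + \int_0^t X_s\,\mathrm{d}s)$ one obtains
\begin{equation*}
\mathrm{d}\overline{\mu}_t = \frac{X_t - \overline{\mu}_t}{r+t}\,\mathrm{d}t = \frac{Y_t}{r+t}\,\mathrm{d}t ,
\end{equation*}
so that $(X,\overline{\mu})$ solves \eqref{equX1}--\eqref{defmu} if and only if $(Y,\overline{\mu})$ solves the system \eqref{equYV}. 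The decisive point is that the first line of \eqref{equYV}, namely $\mathrm{d}Y_t = \mathrm{d}B_t - g(t)\nabla V(Y_t)\,\mathrm{d}t - Y_t(r+t)^{-1}\mathrm{d}t$, does not involve $\overline{\mu}$: the problem has become a genuine SDE in the single variable $Y$.

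For this $Y$-equation the diffusion coefficient is the identity and the drift is $b(t,y) = -g(t)\nabla V(y) - y/(r+t)$. Since $V\in\mathcal C^2$ the map $\nabla V$ is locally Lipschitz, while $g\in\mathcal C^1$ and $t\mapsto (r+t)^{-1}$ is smooth on $[0,\infty)$ because $r>0$; hence $b$ is continuous in $t$ and locally Lipschitz in $y$, uniformly on compact time-intervals. The classical theory of stochastic differential equations then yields a unique strong solution up to an explosion time $\tau = \lim_n \tau_n$, where $\tau_n := \inf\{t\ge 0:\ |Y_t|\ge n\}$, together with pathwise uniqueness on each stochastic interval $[0,\tau_n]$.

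The one step that genuinely uses the structure of $V$ is non-explosion, i.e.\ $\tau = \infty$ almost surely. Here I would take $|y|^2$ as Lyapunov function. Because $\chi$ is compactly supported, $\nabla\chi$ is continuous and vanishes off a compact set, hence bounded by some $K<\infty$; combining this with $\nabla^2 W\ge c\,\mathrm{Id}$ gives the dissipativity estimate
\begin{equation*}
(y,\nabla V(y)) = (y,\nabla W(y)-\nabla W(0)) + (y,\nabla W(0)+\nabla\chi(y)) \ge c|y|^2 - (|\nabla W(0)|+K)|y| \ge \tfrac{c}{2}|y|^2 - C_0
\end{equation*}
for a constant $C_0$. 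Applying It\^o's formula to $|Y_{t\wedge\tau_n}|^2$, taking expectations (the stopped stochastic integral is a true martingale since $|Y|\le n$ on $[0,\tau_n]$) and inserting the bound gives
\begin{equation*}
\mathbb{E}\,|Y_{t\wedge\tau_n}|^2 = |Y_0|^2 + \mathbb{E}\!\int_0^{t\wedge\tau_n}\!\Big[d - 2g(s)(Y_s,\nabla V(Y_s)) - \tfrac{2|Y_s|^2}{r+s}\Big]\mathrm{d}s \le |Y_0|^2 + dt + 2C_0\,G(t),
\end{equation*}
a quantity finite for every $t$ and independent of $n$. Chebyshev's inequality then yields $\mathbb{P}(\tau_n\le t)\le (|Y_0|^2+dt+2C_0 G(t))/n^2\to 0$, whence $\tau=\infty$ a.s.\ and $Y$ is global.

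Finally I would transfer the conclusion back to $X$. Given the unique global $Y$, set $\overline{\mu}_t := \bar{\mu} + \int_0^t Y_s\,(r+s)^{-1}\mathrm{d}s$ and $X_t := Y_t + \overline{\mu}_t$; the computation of the first paragraph, read in reverse, shows that $\overline{\mu}_t$ coincides with the empirical mean \eqref{defmu} and that $X$ solves \eqref{equX1}. Uniqueness for $X$ follows because $\overline{\mu}$ is reconstructed pathwise, by a deterministic integral, from $Y$: any two solutions of \eqref{equX1} produce, via $Y=X-\overline{\mu}$, solutions of the $Y$-equation that must coincide. The only delicate point in the whole scheme is the non-explosion estimate, where one must check that the dissipativity survives the possibly unbounded factor $g(t)$; it does precisely because global existence is a finite-time statement and $G(t)=\int_0^t g(s)\,\mathrm{d}s<\infty$ for every $t$, so the growth condition \eqref{growth} is not even needed at this stage.
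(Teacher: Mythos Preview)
Your argument is correct. The reduction to the autonomous $Y$-equation \eqref{equYV}, local existence by local Lipschitzness of $\nabla V$, non-explosion via the Lyapunov function $|y|^2$ together with the dissipativity $(y,\nabla V(y))\ge \tfrac{c}{2}|y|^2-C_0$, and the final reconstruction $X_t=Y_t+\overline{\mu}_t$ are all sound; in particular the observation that only the finiteness of $G(t)$ on compact intervals is needed, not the growth condition \eqref{growth}, is accurate.

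There is nothing to compare with, however: the paper does not prove this proposition. It is simply recalled, with a reference to Proposition~3.3 of \cite{CK}, and no argument is given here. Your proof is precisely the kind of standard Lyapunov--localisation argument one expects to find in \cite{CK}, so in spirit it matches what the authors rely on.
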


\begin{theo}(theorem 5.6)\label{thm:cvge}
Suppose that $\lim g(t) = \infty$ and $\lim g'(t)/g^2(t) = 0$. Then, a.s. the normalized occupation measure of $Y$ converges weakly to a convex combination of Dirac measures taken in the critical points of $V$: there exist $a_i\geq 0$ such that $\sum_i a_i =1$ and for all continuous bounded function $f$, we have a.s. $$\frac{1}{t}\int_0^t f(Y_s) \mathrm{d}s \underset{t\rightarrow \infty}{\longrightarrow} \sum_{i=1}^n a_i f(m_i).$$
\end{theo}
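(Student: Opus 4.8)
The plan is to work with equation \eqref{equYV}: on its first line the process $(Y_t)$ is a genuine, time-inhomogeneous Markov diffusion — the self-interaction has disappeared, and $Y$ is driven by the strongly confining drift $-g(t)\nabla V(Y_t)$ perturbed only by the negligible term $-Y_t/(r+t)$. I would proceed in three stages: (i) an energy estimate forcing the occupation measure $\Pi_t:=\frac1t\int_0^t\delta_{Y_s}\,\mathrm{d}s$ to load only the critical set of $V$; (ii) an instability argument discarding the points of $Max$; (iii) the passage from subsequential limits to genuine convergence, via the time change $u=G(t)$ which turns $Y$ into a slowly cooled Langevin diffusion.

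For stage (i), apply It\^o's formula to $V(Y_t)$ and rearrange to get
\begin{equation*}
\int_0^t g(s)\,|\nabla V(Y_s)|^2\,\mathrm{d}s = V(Y_0)-V(Y_t)+N_t+\tfrac12\int_0^t\Delta V(Y_s)\,\mathrm{d}s-\int_0^t\frac{(\nabla V(Y_s),Y_s)}{r+s}\,\mathrm{d}s,
\end{equation*}
where $N_t=\int_0^t(\nabla V(Y_s),\mathrm{d}B_s)$. I would then bound the right-hand side from above: $V\ge0$; the growth bound $\Delta V\le a+bV$; the inequality $(\nabla V(x),x)\ge -K$ coming from convexity (since $V(0)\ge V(x)-(\nabla V(x),x)$ up to the compactly supported $\chi$), which makes the last integral $O(\log t)$; the inequality $V\le C(1+|\nabla V|^2)$ coming from strong convexity of $W$ (again up to $\chi$), which lets one re-absorb $\int_0^t V(Y_s)\,\mathrm{d}s$ into the left-hand side, using that $g$ exceeds any fixed constant past a finite time; and the strong law of large numbers for $N_t$, whose bracket $\int_0^t|\nabla V(Y_s)|^2\,\mathrm{d}s$ is itself dominated by $\int_0^t g(s)|\nabla V(Y_s)|^2\,\mathrm{d}s$. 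One concludes that a.s.\ $\int_0^t g(s)|\nabla V(Y_s)|^2\,\mathrm{d}s=O(t)$, and since $g(s)\to\infty$, splitting the integral at a large fixed time gives $\frac1t\int_0^t|\nabla V(Y_s)|^2\,\mathrm{d}s\to0$ a.s.; the same computation yields tightness of $(\Pi_t)$. Hence every weak limit point $\nu$ of $(\Pi_t)$ satisfies $\int|\nabla V|^2\,\mathrm{d}\nu=0$, so is supported on the finite set $Min\cup Max$.

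For stage (ii), fix $M_j\in Max$ and a unit eigenvector $e$ of $\nabla^2 V(M_j)$ with negative eigenvalue $-\lambda$. As long as $Y$ stays in a small ball $B(M_j,\rho)$, Taylor expansion gives for $\zeta_t:=(Y_t-M_j,e)$
\begin{equation*}
\mathrm{d}\zeta_t=(\mathrm{d}B_t,e)+\lambda g(t)\,\zeta_t\,\mathrm{d}t+\big(g(t)\,o(\rho)+O(\rho/(r+t))\big)\,\mathrm{d}t,
\end{equation*}
a linear equation that is exponentially repelling in the fast time $G(t)$. Writing $\eta_t=e^{-\lambda G(t)}\zeta_t$, one checks that $\eta_t-\eta_0-\int_0^t e^{-\lambda G(s)}(\mathrm{d}B_s,e)$ stays of order $\rho$, while the stochastic integral converges a.s.\ because $\int^\infty e^{-2\lambda G(s)}\,\mathrm{d}s<\infty$ (here $g\to\infty$ is used, also through $\int^\infty e^{-\lambda G(s)}g(s)\,\mathrm{d}s<\infty$). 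Thus, for $\rho$ small, $\eta_t$ has a non-zero limit on the event that $Y$ never leaves $B(M_j,\rho)$, whence $|\zeta_t|\sim e^{\lambda G(t)}$ — impossible inside the ball — so $Y$ must leave it; moreover, because $g\to\infty$, an excursion entered at a late time is very short. An excursion / Borel--Cantelli bookkeeping then gives $\Pi_t(B(M_j,\rho))\to0$ a.s.\ for every small $\rho$, so every limit point of $\Pi_t$ is $\sum_i a_i\delta_{m_i}$ with $a_i\ge0$ and $\sum_i a_i=1$.

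It remains to show that $\Pi_t$ converges. The time change $u=G(t)$ turns $(Y_t)$ into
\begin{equation*}
\mathrm{d}\tilde Y_u=\varepsilon(u)\,\mathrm{d}\tilde B_u-\nabla V(\tilde Y_u)\,\mathrm{d}u-\frac{\varepsilon(u)^2}{r+G^{-1}(u)}\,\tilde Y_u\,\mathrm{d}u,\qquad\varepsilon(u)^2=\frac{1}{g(G^{-1}(u))},
\end{equation*}
a Langevin diffusion whose temperature $\varepsilon(u)^2$ tends to $0$, and the hypothesis $g'/g^2\to0$ makes $\varepsilon$ vary slowly. When $g$ grows faster than logarithmically the temperature decreases faster than Hajek's threshold, so noise-driven transitions between wells cease after a finite time: $\tilde Y$ (hence $Y$) is then trapped in a single well and, the intra-well fluctuations shrinking to $0$, $\Pi_t\to\delta_{m_{i^*}}$ for a (random) $i^*$. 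In the borderline logarithmic regime one must instead control the inter-well dynamics with the spectral-gap and free-energy estimates of simulated annealing (in the spirit of Miclo, Holley--Stroock--Kusuoka and Hwang--Sheu) to see that the weights $\Pi_t(\{m_i\})$ stabilize. I expect the conceptual core to be stage (ii) together with the borderline case of stage (iii): one needs quantitative control of the excursions of $Y$ near each $M_j$ and of the transitions between wells, uniformly along the whole cooling schedule, and it is precisely there that the hypotheses $\lim g=\infty$ and $\lim g'/g^2=0$ are genuinely used; stage (i), while the most computation-heavy, is essentially routine.
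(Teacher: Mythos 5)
A preliminary remark: the present paper does not prove this statement. It is Theorem 5.6 of the companion work \cite{CK}, recalled here without proof, so your attempt cannot be checked against an argument printed in this text and must be judged on its own merits. On those merits, your stage (i) is sound and is indeed the routine part: It\^o's formula applied to $V(Y_t)$, together with $\Delta V\le a+bV$, $(\nabla V(x),x)\ge -K$, $V\le C(1+|\nabla V|^2)$ outside a compact set and the strong law for the local martingale $N_t$, does yield $\frac1t\int_0^t|\nabla V(Y_s)|^2\,\mathrm{d}s\to 0$ a.s.\ and tightness, hence that every weak limit point of the occupation measure is carried by the finite critical set of $V$.

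The genuine gaps are in stages (ii) and (iii), which you correctly identify as the core but do not carry out. In stage (ii), the exponential-repulsion computation along the unstable eigendirection shows that $Y$ cannot remain indefinitely in a small ball $B(M_j,\rho)$; this is strictly weaker than what is needed, namely that the \emph{total occupation time} of $B(M_j,\rho)$ up to time $t$ is $o(t)$ almost surely. Stage (i) controls the time spent in an annulus $B(M_j,\rho)\setminus B(M_j,\rho')$, where $|\nabla V|^2$ is bounded below, but it bounds neither the number of re-entries into $B(M_j,\rho')$ nor the time spent approaching $M_j$ along its stable manifold, where the drift is contracting and an excursion need not be short. The ``excursion / Borel--Cantelli bookkeeping'' you dispatch in one sentence is precisely the nontrivial content of such non-attainability results, and nothing in the sketch supplies it. Stage (iii) is worse: the theorem asserts convergence of the occupation measure, i.e.\ stabilization of the weights $a_i$, and you prove no such thing. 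In the fast-cooling regime you assert trapping in a single well ``after a finite time'' without an argument (and this would make the limit random, which must then be reconciled with the fact that the $a_i$ are used as constants depending only on $V$ in Theorems 1.1 and 4.1); in the borderline logarithmic regime you explicitly defer to ``spectral-gap and free-energy estimates'' that you do not perform --- and note that the free-energy machinery of Section 3 of this paper is developed only under the extra restriction $k>\max\{2\osc(\chi),d/4\}$ and yields convergence in distribution, not the almost sure ergodic statement, whereas the theorem you are proving assumes only $g\to\infty$ and $g'/g^2\to 0$. As it stands, your argument establishes at best that subsequential limits of $\frac1t\int_0^t\delta_{Y_s}\,\mathrm{d}s$ are convex combinations of Dirac masses at critical points, not the stated theorem.
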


\section{Asymptotic behavior of $Y$}\label{s:asympY}
Consider the time-changed process $Z_t := Y_{G^{-1}(t)}$, satisfying the following SDE
\begin{equation}\label{recequZ}
\mathrm{d}Z_t = \frac{1}{\sqrt{g\circ G^{-1}(t)}} \mathrm{d}W_t -
\left(\nabla V(Z_t) + \frac{Z_t}{(r+G^{-1}(t))g\circ
G^{-1}(t)}\right) \mathrm{d}t, \, Z_0=z,
\end{equation}
where $W$ is a Brownian motion such that $\frac{1}{\sqrt{g\circ G^{-1}(t)}} \mathrm{d}W_t$ has the same law as $B_{G^{-1}(t)}$\footnote{The Wiener processes $B_t$ and $W_t$ are not the same, but this does not matter because we are only interested in the probability distribution.}. We identify $1/\sqrt{g\circ G^{-1}(t)}$ as the temperature in the simulated annealing model. So, define $\varepsilon^2(t) := \frac{1}{g\circ G^{-1}(t)}$. The process $Z$ reads
\begin{equation}
\mathrm{d}Z_t = \varepsilon(t) \mathrm{d}B_t - \left(\nabla V(Z_t) + \frac{Z_t \varepsilon(t)^2}{r+G^{-1}(t)}\right) \mathrm{d}t.
\end{equation}
In this part, we suppose that $\frac{\log G(t)}{g(t)}$ is bounded and that $g'(t)/g(t)^2$ converges to zero.

\subsection{Tightness}
We begin to prove that the law of the process $Z$ is a tight family of measures.

\begin{lemma}(Chiang-Hwang-Sheu)\label{tight}
There exist some $R,c>0$ such that $$\mathbb{E}\left(V(Z_t)\1_{ \{V(Z_t)\geq
R\}}\right) \leq \frac{c}{g\circ G^{-1}(t)}.$$
\end{lemma}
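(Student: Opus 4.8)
The plan is to adapt the standard Lyapunov/semimartingale argument of Chiang--Hwang--Sheu for Langevin-type annealing diffusions, treating the extra drift term $-Z_t\varepsilon(t)^2/(r+G^{-1}(t))$ as a lower-order perturbation. Concretely, I would apply It\^o's formula to $e^{\beta t}V(Z_t)$ for a suitable small constant $\beta>0$ (or, more robustly, to $V(Z_t)$ directly and then use Gronwall on $\mathbb{E}V(Z_t)$), and estimate the generator $\mathcal{L}_t V$. Writing $\mathcal{L}_t$ for the time-dependent generator of \eqref{recequZ}, one has
\begin{equation*}
\mathcal{L}_t V(z) = \frac{\varepsilon(t)^2}{2}\Delta V(z) - |\nabla V(z)|^2 - \frac{\varepsilon(t)^2}{r+G^{-1}(t)}(z,\nabla V(z)).
\end{equation*}
The first term is controlled by the growth hypothesis $\Delta V\le a+bV$ in \eqref{growth}, so it contributes at most $\tfrac{\varepsilon(t)^2}{2}(a+bV)$; the dominant good term is $-|\nabla V(z)|^2$, which by the second part of \eqref{growth} dominates any multiple of $V(z)$ once $|z|$ (equivalently $V(z)$) is large, say for $V(z)\ge R_0$; the last term is handled by convexity, since $(z,\nabla V(z))\ge c|z|^2 - C'\ge -C''$ using $\nabla^2 W\ge c\,\mathrm{Id}$ and the compact support of $\chi$, hence $-\tfrac{\varepsilon(t)^2}{r+G^{-1}(t)}(z,\nabla V(z))\le \tfrac{\varepsilon(t)^2}{r+G^{-1}(t)} C'' \le C''\varepsilon(t)^2$. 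Combining, there is $R\ge R_0$ and constants such that on $\{V(z)\ge R\}$ one gets $\mathcal{L}_t V(z)\le -\kappa V(z) + c_1\varepsilon(t)^2$ for some $\kappa>0$ absorbing the $bV$ correction, while on $\{V(z)<R\}$ trivially $\mathcal{L}_t V(z)\le c_2\varepsilon(t)^2 + $ const.

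Next I would turn this pointwise generator bound into the stated expectation bound. Set $\phi(t):=\mathbb{E}(V(Z_t))$. From It\^o's formula and the generator estimate, $\phi$ satisfies (after checking that the local martingale part is a true martingale, or by localization and Fatou) the differential inequality $\phi'(t)\le -\kappa\,\mathbb{E}(V(Z_t)\mathbf{1}_{\{V(Z_t)\ge R\}}) + c_3(1+\varepsilon(t)^2)$, and since $\mathbb{E}(V(Z_t)\mathbf{1}_{\{V(Z_t)\ge R\}}) \ge \phi(t) - R$, this closes to $\phi'(t)\le -\kappa\phi(t) + c_4(1+\varepsilon(t)^2)$. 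Because $\varepsilon(t)^2 = 1/(g\circ G^{-1}(t))\to 0$ and $\varepsilon(t)^2$ is essentially monotone (its derivative is controlled by the hypothesis $g'/g^2\to 0$, which is exactly what makes $\varepsilon$ slowly varying), Gronwall's lemma gives $\limsup_t \phi(t) \le c_4/\kappa \cdot \limsup_t (1+\varepsilon(t)^2)$ — but this only bounds $\mathbb{E}V(Z_t)$ by a constant, not by $c/(g\circ G^{-1}(t))$. To get the sharper rate I would instead work on $\{V(Z_t)\ge R\}$ from the start: let $\psi(t):=\mathbb{E}(V(Z_t)\mathbf{1}_{\{V(Z_t)\ge R\}})$ and note that the ``bad'' region contributes, via an occupation-time / comparison argument, a production term of order $\varepsilon(t)^2$ only (the process spends most of its time near the minima where $V$ is $O(1)$, and excursions into $\{V\ge R\}$ are penalized by the strong drift $-|\nabla V|^2$). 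This yields $\psi'(t)\le -\kappa\psi(t) + c_5\varepsilon(t)^2$, and then Gronwall together with the slow variation of $\varepsilon(t)^2$ (so that $\int_0^t e^{-\kappa(t-s)}\varepsilon(s)^2\,\mathrm{d}s \sim \varepsilon(t)^2/\kappa$) delivers exactly $\psi(t)\le c\,\varepsilon(t)^2 = c/(g\circ G^{-1}(t))$ for $t$ large, and then for all $t$ after adjusting the constant.

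The main obstacle is the last step: upgrading the crude bound $\mathbb{E}V(Z_t)=O(1)$ to the tight rate $\mathbb{E}(V(Z_t)\mathbf{1}_{\{V(Z_t)\ge R\}})=O(\varepsilon(t)^2)$. The point is that the constant-order contribution to $\mathbb{E}V(Z_t)$ comes entirely from the region $\{V\le R\}$ near the wells, whereas on the tail $\{V\ge R\}$ the Lyapunov inequality has a genuinely contractive drift with a source term that is only $O(\varepsilon(t)^2)$ (coming from the $\tfrac{\varepsilon^2}{2}\Delta V$ and the $\tfrac{\varepsilon^2}{r+G^{-1}}(z,\nabla V)$ terms, both carrying a factor $\varepsilon(t)^2$), plus a boundary flux across $\{V=R\}$ which must be shown to be $O(\varepsilon(t)^2)$ as well. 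Making this rigorous requires either a careful stopping-time decomposition of the trajectory into excursions above and below level $R$, or a two-function Lyapunov argument (first prove $\mathbb{E}V(Z_t)=O(1)$, then feed this into a refined inequality for $\psi$). One also has to be careful that all the a priori integrability and martingale-regularity needed to differentiate under the expectation is justified, which follows from the super-linear growth of $|\nabla V|^2/V$ guaranteeing non-explosion and finite moments of $V(Z_t)$ — this is routine given hypotheses (1)--(3) but should be noted. Since the lemma is attributed to Chiang--Hwang--Sheu, I would cite \cite{CHS} for the core estimate and only detail the modifications caused by the additional drift $-Z_t\varepsilon(t)^2/(r+G^{-1}(t))$, which, being bounded in operator effect by $C''\varepsilon(t)^2$ as shown above, does not alter the conclusion.
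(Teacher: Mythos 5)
Your overall strategy --- a Lyapunov/It\^o argument producing a differential inequality with a contractive drift and an $O(\varepsilon(t)^2)$ source term, closed by Gronwall together with the slow variation of $\varepsilon$ --- is exactly the paper's, and your treatment of the extra radial drift $-Z_t\varepsilon(t)^2/(r+G^{-1}(t))$ and of the final Gronwall step matches what the paper does. But there is a genuine gap at precisely the step you yourself flag as the main obstacle: you assert $\psi'(t)\le -\kappa\,\psi(t)+c_5\,\varepsilon(t)^2$ for $\psi(t):=\mathbb{E}\bigl(V(Z_t)\1_{\{V(Z_t)\ge R\}}\bigr)$ via an unspecified ``occupation-time / comparison argument''. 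As written this cannot be derived: the map $z\mapsto V(z)\1_{\{V(z)\ge R\}}$ is not $\mathcal{C}^2$ (it is not even continuous), so It\^o's formula does not apply to it, $\psi$ need not be differentiable, and the ``boundary flux across $\{V=R\}$'' you mention is exactly the term you would have to control and do not. Neither of the two routes you sketch (an excursion decomposition, or feeding $\mathbb{E}V(Z_t)=O(1)$ into a refined inequality) is carried out, so the proof is incomplete at its decisive point.

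The paper's resolution (adapted from Duflo) is to smooth the truncation rather than work with the sharp indicator: take $\phi\in\mathcal{C}^2(\mathbb{R};[0,1])$ nondecreasing with $\phi=0$ on $(-\infty,r_0]$ and $\phi=1$ on $[R,\infty)$, apply It\^o's formula to $\psi:=(\phi\circ V)\,V$, and set $\alpha(t):=\mathbb{E}\,\psi(Z_t)$. Because $\psi$ is globally $\mathcal{C}^2$ there is no boundary term; because $\phi\circ V$ vanishes near the wells, the constant-order contribution from $\{V\le r_0\}$ disappears entirely; and one checks $\Delta\psi\le \widetilde C\bigl((\phi\circ V)V+1\bigr)$, the drift bound $\bigl(\phi\circ V+V\,\phi'\circ V\bigr)|\nabla V|^2\ge 2c\,(\phi\circ V)V$ on the support of $\phi\circ V$ (using $|\nabla V|^2\ge 2cV$ there, from the growth hypothesis), and $(z,\nabla V(z))\ge 0$ where $V(z)$ is large, which together give cleanly $\alpha'(t)\le -c\,\alpha(t)+\tfrac{\widetilde C}{2}\,\varepsilon(t)^2$. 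Gronwall and the pointwise inequality $\1_{\{V\ge R\}}\le\phi\circ V$ then finish. If you insert this smooth-cutoff device in place of the sharp indicator, your argument becomes the paper's proof; without it, the central differential inequality remains unjustified.
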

\begin{proof}
Let us first exhibit a constant $\gamma$ such that $\mathbb{E}\left(V(Z_t)\right) \leq \mathbb{E} V(Z_0) + \gamma t$. By It\^o's formula, we have
\begin{eqnarray*}
\frac{\mathrm{d}}{\mathrm{d}t}\mathbb{E}V(Z_t) = -\mathbb{E}\left[\left( \nabla V(Z_t),\frac{Z_t\varepsilon(t)^2}{(r+G^{-1}(t))}\right)+\left|\nabla V(Z_t)\right|^2 - \frac{\Delta V(Z_t)}{2}\varepsilon(t)^2 \right]. 
 \end{eqnarray*}
As $\left|\nabla V\right|^2-\Delta V$ is bounded from below, the first assertion follows.
Now, we adapt the proof given by Duflo \cite{D}. The growth condition \eqref{growth} implies that there exists $r_0$ such that,
for $V(x)\geq r_0$, we get $|\nabla V(x)|^2 \geq 2cV(x)$ where $c>0$
is a constant. Let $\phi$ be a nonnegative and nondecreasing function of class $\mathcal{C}^2$, $\phi: \mathbb{R}\rightarrow
[0,1]$ such that $\phi(x) =0$ for $x\leq r_0$, $\phi(x) =1$ for $x\geq R$ where $r_0<R<\infty$. Remark, that the continuous function $\nabla (\phi\circ V) = (\phi'\circ V) \nabla V$ is bounded and consider the application $\psi := (\phi\circ V) V$. We apply It\^o's formula to the function $x\mapsto \psi(x)$ and we get
\begin{eqnarray*}
\mathrm{d} \psi(Z_t) &=& \varepsilon(t) (\nabla \psi(Z_t), \mathrm{d}W_t)+ \frac{\varepsilon(t)^2}{2} \Delta \psi(Z_t) \mathrm{d}t\\
&-& \left(\frac{\psi(Z_t)}{ V(Z_t)} + V(Z_t) \phi' \circ V(Z_t)\right)\left( |\nabla V (Z_t)|^2 + \frac{\varepsilon(t)^2}{r+ G^{-1}(t)} (Z_t, \nabla V(Z_t))\right) \mathrm{d}t.
\end{eqnarray*}
Let $\alpha (t) := \mathbb{E}[\phi\circ V(Z_t) V(Z_t)]$. By the first assertion, $\alpha$ is well defined.
Due to 
the compact support of $\phi'$ and 
because we can decompose $\Delta \psi$ as
\begin{eqnarray*}
\Delta \psi(z) &=& \phi''\circ V(z) V(z) |\nabla V(z)|^2 + \phi'\circ V(z) V(z) \Delta V(z)\\ 
&+& 2 \phi'\circ V(z) |\nabla V(z)|^2 + \phi \circ V(z) \Delta V(z),
\end{eqnarray*}
there exists $\widetilde{C}>0$ such that
$\Delta \psi \leq \widetilde{C} ((\phi \circ V)V +1)$. So, we get the bound 
\begin{eqnarray*}
\int_t^{t+h} \varepsilon(s)^2 \mathbb{E}\left(\Delta \psi(Z_s)\right)\mathrm{d}s &\leq & \widetilde{C}\left(G^{-1}(t+h)-G^{-1}(t)\right)
+ \widetilde{C} \int_{t}^{t+h} \alpha(s)\varepsilon(s)^2 \mathrm{d}s.
\end{eqnarray*}
On the other hand, we have the lower bound
\begin{eqnarray*}
&\mathbb{E}& \left[ \left( \phi\circ V(Z_t) + V(Z_t) \phi'\circ V(Z_t)\right) |\nabla V (Z_t)|^2 \right]\\ 
&\geq &  2c \mathbb{E} \left[ \left( \phi\circ V(Z_t) + V(Z_t) \phi'\circ V(Z_t)\right) V (Z_t) \right]\geq  2c \alpha(t).
\end{eqnarray*}
For $r\ge r_0$ large enough, if $V(z)\geq r$ then $(z,\nabla V(z)) \geq 0$, so we get 
\begin{eqnarray*}
\mathbb{E} \left( \left( \phi\circ V(Z_t) + V(Z_t) \phi'\circ V(Z_t)\right) (Z_t, \nabla V(Z_t)) \right)  \geq  0.
\end{eqnarray*}
Therefore, the preceding It\^o's formula leads to 
\begin{eqnarray*}
\alpha(t+h) - \alpha (t) &\leq & -2c\int_t^{t+h}\alpha (s) \mathrm{d}s + \frac{\widetilde{C}}{2}\int_t^{t+h} \alpha(s)\varepsilon(s)^2 \mathrm{d}s\\ 
&+& \frac{\widetilde{C}}{2}(G^{-1}(t+h) - G^{-1}(t)),
\end{eqnarray*}
Letting $h$ go to zero, this yields to $\alpha'(t) \leq -2c \alpha(t) + \frac{\widetilde{C}}{2} (\alpha(t)+1) \varepsilon(t)^2$. Choose $r$ large enough, so that we have $$\alpha'(t) \leq -c \alpha(t) + \frac{\widetilde{C}}{2}\varepsilon(t)^2.$$
In order to solve this inequation, let $\alpha(t) := \beta(t) e^{-ct}$. We have $\beta(t)\leq \int_0^t \frac{\widetilde{C}e^{cs}}{2}\varepsilon(s)^2\mathrm{d}s$. As $\lim_{t\to + \infty} \frac{g'(t)}{g^2(t)} = 0$, this yields to $\alpha (t)\leq C_1\varepsilon(t)^2/c$ where $C_1$ is a positive constant independent of $c$. To conclude, we just need to remark that $\1_{\{V\geq R\}} \leq \phi \circ V$.
\end{proof}

\begin{coro}\label{tension}
The time marginal laws of the processes $Y$ and $Z$ are tight.
\end{coro}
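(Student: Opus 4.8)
The plan is to derive tightness of the family $\{\mathcal{L}(Z_t)\}_{t\ge 0}$ directly from Lemma \ref{tight}, and then transfer it to $Y$ through the deterministic time change $s\mapsto G(s)$. The starting observation is that the convexity hypothesis on $V$ makes $V$ coercive: since $\nabla^2 W\geq c\,\mathrm{Id}$ one has $W(x)\geq W(0)+(\nabla W(0),x)+\tfrac{c}{2}|x|^2$, and $\chi$ is bounded with compact support, so $V(x)\to\infty$ as $|x|\to\infty$. Consequently every sublevel set $\{x:V(x)\leq\lambda\}$ is closed and bounded, hence compact, and these sets are the natural candidates for the compacts required by Prokhorov's criterion.

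Next I would control the temperature from below. As $g$ is continuous, strictly positive on $[0,\infty)$ and, in the regime considered in this section, tends to infinity, we have $\delta:=\inf_{u\geq 0}g(u)>0$; therefore $g\circ G^{-1}(t)\geq\delta$ for every $t$, and the right-hand side of Lemma \ref{tight} is bounded by $c/\delta$ uniformly in $t$. The essential point is that one must use the \emph{truncated} moment bound and not merely $\mathbb{E}V(Z_t)$, which grows linearly in $t$ (as shown at the beginning of the proof of Lemma \ref{tight}): for $\lambda\geq R$, Markov's inequality gives
\[
\mathbb{P}\big(V(Z_t)\geq\lambda\big)\leq\frac{1}{\lambda}\,\mathbb{E}\big(V(Z_t)\1_{\{V(Z_t)\geq\lambda\}}\big)\leq\frac{1}{\lambda}\,\mathbb{E}\big(V(Z_t)\1_{\{V(Z_t)\geq R\}}\big)\leq\frac{c}{\delta\lambda}.
\]
Given $\varepsilon>0$, choose $\lambda\geq R$ with $c/(\delta\lambda)\leq\varepsilon$; then $K_\varepsilon:=\{x:V(x)\leq\lambda\}$ is compact and $\sup_{t\geq 0}\mathbb{P}(Z_t\notin K_\varepsilon)\leq\varepsilon$, which is precisely tightness of $\{\mathcal{L}(Z_t)\}_{t\geq 0}$.

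Finally, since $g>0$ the function $G$ is a continuous strictly increasing bijection of $[0,\infty)$ onto $[0,\infty)$, and by construction $Y_s=Z_{G(s)}$; hence $\{\mathcal{L}(Y_s)\}_{s\geq 0}$ is merely a reindexing of $\{\mathcal{L}(Z_t)\}_{t\geq 0}$ and is therefore tight as well. The argument is short once Lemma \ref{tight} is available; the only genuine subtlety — already flagged above — is to replace the time-growing full moment $\mathbb{E}V(Z_t)$ by the truncated quantity $\mathbb{E}(V(Z_t)\1_{\{V(Z_t)\geq\lambda\}})$ before applying Markov's inequality, so that the resulting tail bound is uniform in $t$, together with the elementary remark that $V$ is coercive so that its sublevel sets can serve as the compacts.
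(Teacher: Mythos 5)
Your proof is correct and follows essentially the same route as the paper: Lemma \ref{tight} plus Markov's inequality applied to sublevel sets of the coercive potential $V$, then transfer to $Y$ via the deterministic time change. The only (cosmetic) difference is that the paper first deduces the \emph{uniform} bound $\mathbb{E}V(Z_t)\le R+c/g(0)$ by splitting on $\{V<R\}$ and $\{V\ge R\}$ and then applies Markov to the full moment, so your parenthetical claim that $\mathbb{E}V(Z_t)$ ``grows linearly in $t$'' is not accurate (only the crude a priori bound at the start of Lemma \ref{tight}'s proof does); this does not affect the validity of your argument, which applies Markov directly to the truncated moment.
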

\begin{proof}
The previous result implies $$\mathbb{E}V(Z_t) = \mathbb{E}(V(Z_t)\1_{V(Z_t) < R}) + \mathbb{E}(V(Z_t)\1_{V(Z_t) \geq R}) \leq R + \frac{c}{g(0)}.$$ So, for a constant $A>0$, there exists a compact set $K$ such that $\{V\leq A\} \subset K$ and $$\mathbb{P}(Z_t \in K) \geq \mathbb{P}(V(Z_t) \leq A) \geq 1-\frac{\mathbb{E}(V(Z_t))}{A} \underset{A \to \infty}{\rightarrow} 1.\qedhere$$
\end{proof}

\subsection{Convergence in distribution towards the global minima of $V$}
Remember that $\varepsilon^2(t) := \frac{1}{g\circ G^{-1}(t)}$ and let
$a(t) := (r+G^{-1}(t)) \varepsilon^{-2}(t)$. The process $Z$ reads
\begin{equation}\label{recuit}
\mathrm{d}Z_t = \varepsilon(t) \mathrm{d}B_t - \nabla V_t(Z_t) \mathrm{d}t
\end{equation}
where we have defined $V_t(x) := V(x) + \frac{|x|^2}{a(t)}$.
Actually, we will prove that this non-homogeneous Markov process
converges in distribution to its ``invariant" probability measure.
Of course, if we suppose that $a(t) \equiv a$ and $\varepsilon(t)\equiv \varepsilon$, then the convergence in distribution is well-known. 

Let $L_{t,\varepsilon}$ be the operator defined by
$L_{t,\varepsilon} := \frac{1}{2}\varepsilon^2\Delta - (\nabla
V_t, \nabla )$ and $P^{t,\varepsilon}$ the associated semigroup. Define 
\begin{equation}\label{probainv}
\Pi_{t,\varepsilon}(\mathrm{d}x) := \frac{1}{Z(t,\varepsilon)}
e^{-2\varepsilon^{-2} V_t(x)} \mathrm{d}x,
\end{equation}
where $Z(t,\varepsilon) := \int_{\mathbb{R}^d} e^{-2\varepsilon^{-2} V_t(x)} \mathrm{d}x$. As $|\nabla V_t|^2 - \Delta V_t$ is bounded from below, the theory of Schr\"odinger operator implies that
$L_{t,\varepsilon}$ is self-adjoint in $L^2(\Pi_{t,\varepsilon})$ and admits a
spectral gap. Furthermore, when $|\nabla V_t|^2 - \Delta V_t$ goes
to the infinity as $|x|\rightarrow \infty$ then the spectrum of
$L_{t,\varepsilon}$ is discrete: $0=\lambda_1(t,\varepsilon)<-\lambda_2(t,\varepsilon)< \ldots$. Heuristically, when the time is of order $e^{\varepsilon^{-2} \lambda_2}$, the
transition density has a nice lower bound and the process is very
close to the ``invariant probability'' $\Pi_{t,\varepsilon}$. So, our main goal is to compute the convergence of the latter probability measure when $t$ goes to the infinity. What is more, as the subspace corresponding to the first eigenvalue
$\lambda_1(t,\varepsilon)$ is composed by the constant functions, we find
 $$\lambda_2(t,\varepsilon) =\inf\left\{\int |\nabla
\phi|^2 \mathrm{d}\Pi_{t,\varepsilon}; \quad
\mathrm{Var}_{\Pi_{t,\varepsilon}} (\phi) =1, \phi \in
\mathcal{C}^\infty(\mathbb{R}^d)\right\}.$$ So, our first aim is to compute
the eigenvalue $\lambda_2$ and study its behavior when
$t\rightarrow \infty$ (that is $\varepsilon\rightarrow 0$).  

Consider for a while $\Pi_{\infty,\varepsilon}:= \frac{1}{Z(\varepsilon)}
e^{-2\varepsilon^{-2} V(x)} \mathrm{d}x$,
where $Z(\varepsilon) := \int_{\mathbb{R}^d} e^{-2\varepsilon^{-2} V(x)} \mathrm{d}x$. Let $min=\{m_1,\ldots, m_q\}$ be the set of the global minima of $V$. Hwang \cite{H} has established that $\Pi_{\infty,\varepsilon}$ converges weakly when $\varepsilon$ converges to zero and described the limit:
\begin{lemma}(\cite{H})
When $t$ goes to the infinity, the probability measure $\Pi_{\infty,\varepsilon(t)}$ converges weakly
to 
\begin{equation}\label{eq:pi0}
\Pi_0 :=\frac{1}{\underset{1\leq i\leq q}{\sum} (\mathrm{det} \nabla^2 V(m_i))^{-1/2}} \underset{1\leq i\leq q}{\sum} (\mathrm{det} \nabla^2 V(m_i))^{-1/2}\delta_{m_i}.
\end{equation} 
\end{lemma}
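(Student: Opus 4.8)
The plan is to establish this by Laplace's method, adapted to an integral over $\mathbb{R}^d$ that concentrates at a finite set of nondegenerate minima. Since weak convergence is tested against bounded continuous functions, it suffices to fix such an $f$ and show that $\int_{\mathbb{R}^d} f(x)\,\Pi_{\infty,\varepsilon}(\mathrm{d}x)=Z(\varepsilon)^{-1}\int f\,e^{-2\varepsilon^{-2}V}\,\mathrm{d}x$ converges, as $\varepsilon\to 0$, to $\bigl(\sum_{i=1}^{q}(\det\nabla^2V(m_i))^{-1/2}\bigr)^{-1}\sum_{i=1}^{q} f(m_i)(\det\nabla^2V(m_i))^{-1/2}$, the sums running over the global minima. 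Replacing $V$ by $V-\min V$ alters neither $\Pi_{\infty,\varepsilon}$ nor the claimed limit, so I may assume $\min V=0$ and $\{V=0\}=\{m_1,\dots,m_q\}$.

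The first step is localisation. The decomposition $V=W+\chi$ with $\nabla^2W\ge cId$ and $\chi$ compactly supported (hence bounded) forces $V(x)\to\infty$ as $|x|\to\infty$, at least quadratically, so $e^{-V}$ is integrable and every sublevel set $\{V\le\eta\}$ is compact. Because $V$ has finitely many critical points and $V>0$ off $\{m_1,\dots,m_q\}$, for each $\delta>0$ there is $\eta=\eta(\delta)>0$ with $\{V\le\eta\}\subset\bigcup_i B_\delta(m_i)$, hence $V\ge\eta$ on the complement. Then, for $\varepsilon\le 1$, $\int_{\{V\ge\eta\}}e^{-2\varepsilon^{-2}V}\,\mathrm{d}x\le e^{-\eta\varepsilon^{-2}}\int e^{-V}\,\mathrm{d}x = O(e^{-\eta\varepsilon^{-2}})$, which is $o(\varepsilon^d)$; the same bound with the extra bounded factor $f$ handles the numerator. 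So both the total mass and the integral of $f$ concentrate in the balls $B_\delta(m_i)$ up to an error negligible against $\varepsilon^d$.

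Finally I would evaluate each $\int_{B_\delta(m_i)}$ by the rescaling $x=m_i+\varepsilon y$. Since $\nabla V(m_i)=0$ and $\nabla^2V(m_i)>0$, Taylor's formula gives $2\varepsilon^{-2}V(m_i+\varepsilon y)\to (\nabla^2V(m_i)y,y)$ pointwise, while for $\delta$ small enough the nondegeneracy of the Hessian yields $2\varepsilon^{-2}V(m_i+\varepsilon y)\ge c'|y|^2$ on $\{|y|<\delta/\varepsilon\}$, furnishing the integrable dominating function $e^{-c'|y|^2}$; dominated convergence then gives $\varepsilon^{-d}\int_{B_\delta(m_i)}e^{-2\varepsilon^{-2}V}\,\mathrm{d}x\to\int_{\mathbb{R}^d}e^{-(\nabla^2V(m_i)y,y)}\,\mathrm{d}y=\pi^{d/2}(\det\nabla^2V(m_i))^{-1/2}$, and likewise with the factor $f(m_i+\varepsilon y)\to f(m_i)$ inserted. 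Summing over the global minima (the non-global local minima $m_j$ contribute only $O(\varepsilon^d e^{-2\varepsilon^{-2}V(m_j)})$ with $V(m_j)>0$, hence nothing), dividing numerator by denominator, and letting $\delta\to0$ to kill the localisation error yields the identity \eqref{eq:pi0}. The only genuinely delicate point is the tail control in the localisation step — ensuring integrability of $e^{-2\varepsilon^{-2}V}$ and that its mass outside a fixed compact is exponentially small, which is exactly where the coercivity from $\nabla^2W\ge cId$ and the growth assumption \eqref{growth} enter; the rest is standard Laplace asymptotics.
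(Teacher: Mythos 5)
Your argument is correct, but note that the paper does not prove this lemma at all: it is imported verbatim from Hwang's 1980 paper (reference \cite{H}), and the remark immediately following it points out that Hwang's theorem is actually more general, covering minimizing sets that need not be finite collections of nondegenerate points. What you have written is essentially the standard Laplace-method proof of the special case the authors actually use, under their standing hypothesis that each $\nabla^2 V(m_i)$ is positive definite; the same computation is what the authors invoke (again by citation to \cite{H,Z}) inside the proof of Proposition \ref{convrecuit} for the perturbed potential $V_t = V + |x|^2/a(t)$. Your three steps are all sound: the quadratic coercivity of $V=W+\chi$ (with $\nabla^2 W \ge c\,Id$ and $\chi$ bounded) gives integrability of $e^{-V}$ and compact sublevel sets, so the tail bound $\int_{\{V\ge\eta\}} e^{-2\varepsilon^{-2}V}\,\mathrm{d}x \le e^{-(2\varepsilon^{-2}-1)\eta}\int e^{-V}\,\mathrm{d}x = o(\varepsilon^d)$ is valid for $\varepsilon\le 1$; the rescaling $x=m_i+\varepsilon y$ with the dominating Gaussian $e^{-c'|y|^2}$ correctly produces $(\pi\varepsilon^2)^{d/2}(\det\nabla^2 V(m_i))^{-1/2}$ for each global minimum (non-global local minima being killed by the factor $e^{-2\varepsilon^{-2}V(m_j)}$ with $V(m_j)>0$); and the $\pi^{d/2}\varepsilon^d$ factors cancel in the ratio, yielding \eqref{eq:pi0}. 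The only cosmetic point is that the final ``let $\delta\to 0$'' is unnecessary: for any fixed admissible $\delta$ the rescaled domain $\{|y|<\delta/\varepsilon\}$ already exhausts $\mathbb{R}^d$ as $\varepsilon\to 0$, so the limit is exact for fixed $\delta$. What your self-contained proof buys is independence from the external reference under the nondegeneracy assumption; what the citation buys the authors is the more general statement for degenerate or non-isolated minimizers, which they explicitly do not need.
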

\begin{rk}
We use here a weaker form than the result of Hwang, who has actually proved the convergence of $\Pi_{\infty,\varepsilon(t)}$ for a more general set $min$.
\end{rk}

We can now state and prove the
\begin{propo}\label{convrecuit}
Suppose that (asymptotically) $g(t)> \frac{d}{4}\log t$. The probability measure $\Pi_{t,\varepsilon(t)}$ converges weakly
to $\Pi_0$ as $t$ goes to the infinity, where $\Pi_0$ is defined by \eqref{eq:pi0}. 
\end{propo}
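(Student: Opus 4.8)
The strategy is to read $\Pi_{t,\varepsilon(t)}$ as a Gaussian reweighting of $\Pi_{\infty,\varepsilon(t)}$ and to deduce the statement from Hwang's lemma. Since $a(t)=(r+G^{-1}(t))\varepsilon^{-2}(t)$, one has $2\varepsilon^{-2}(t)\,|x|^2/a(t)=2|x|^2/(r+G^{-1}(t))$, so that
\begin{equation*}
\Pi_{t,\varepsilon(t)}(\mathrm{d}x)=\frac{1}{Z(t,\varepsilon(t))}\,e^{-2\varepsilon^{-2}(t)V(x)}\,e^{-2|x|^2/(r+G^{-1}(t))}\,\mathrm{d}x .
\end{equation*}
The standing hypothesis $g(t)>\tfrac d4\log t$ forces both $\varepsilon(t)\to 0$ and $G^{-1}(t)\to\infty$; hence, on the one hand, Hwang's lemma yields $\Pi_{\infty,\varepsilon(t)}\Rightarrow\Pi_0$, a measure carried by the finite set $\{m_1,\dots,m_q\}$ of global minima of $V$, and, on the other hand, the extra weight $x\mapsto e^{-2|x|^2/(r+G^{-1}(t))}$ tends to $1$ locally uniformly. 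It therefore suffices to prove that this reweighting is asymptotically harmless; the only genuine difficulty is that it is \emph{not} uniform at infinity, and this will be absorbed by the tightness of the family $\{\Pi_{\infty,\varepsilon(t)}\}_t$ (a by-product of its weak convergence, or of the growth of $V$ as in Corollary \ref{tension}).

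First I would control the ratio of partition functions. Setting $\eta(t):=2/(r+G^{-1}(t))\to 0$, one has $Z(t,\varepsilon(t))/Z(\varepsilon(t))=\int e^{-\eta(t)|x|^2}\,\mathrm{d}\Pi_{\infty,\varepsilon(t)}\le 1$, while, restricting the integral defining $Z(t,\varepsilon(t))$ to the balls $B(m_i,\delta)$ (pairwise disjoint for $\delta$ small),
\begin{equation*}
\frac{Z(t,\varepsilon(t))}{Z(\varepsilon(t))}\ \ge\ \sum_{i=1}^{q} e^{-\eta(t)(|m_i|+\delta)^2}\,\Pi_{\infty,\varepsilon(t)}\bigl(B(m_i,\delta)\bigr)\ \xrightarrow[t\to\infty]{}\ \sum_{i=1}^{q}a_i=1,
\end{equation*}
using $\eta(t)\to 0$ and $\liminf_t\Pi_{\infty,\varepsilon(t)}(B(m_i,\delta))\ge\Pi_0(\{m_i\})=a_i$ by the portmanteau theorem; hence $Z(t,\varepsilon(t))/Z(\varepsilon(t))\to 1$. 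Then, for bounded continuous $f$,
\begin{equation*}
\int f\,\mathrm{d}\Pi_{t,\varepsilon(t)}=\frac{Z(\varepsilon(t))}{Z(t,\varepsilon(t))}\int f(x)\,e^{-\eta(t)|x|^2}\,\mathrm{d}\Pi_{\infty,\varepsilon(t)}(x),
\end{equation*}
and I split the remaining integral over a closed ball $K=\bar B(0,N)$ having all the $m_i$ in its interior and over $K^c$: on $K$ one has $f(x)e^{-\eta(t)|x|^2}\to f(x)$ uniformly and $\Pi_{\infty,\varepsilon(t)}|_K\Rightarrow\Pi_0|_K$ (since $\Pi_0(\partial K)=0$), so that part converges to $\int_K f\,\mathrm{d}\Pi_0$; the contribution of $K^c$ is at most $\|f\|_\infty\,\Pi_{\infty,\varepsilon(t)}(K^c)$, which is uniformly small for $N$ large by tightness, and $\Pi_0(K^c)=0$ as well. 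Letting $t\to\infty$ and then $N\to\infty$ gives $\int f\,\mathrm{d}\Pi_{t,\varepsilon(t)}\to\int f\,\mathrm{d}\Pi_0$, which is the assertion.

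The main obstacle is precisely the exchange between the two limiting regimes — the Gibbs concentration $\varepsilon(t)\to 0$ and the loss of uniformity of $e^{-\eta(t)|x|^2}$ as $|x|\to\infty$ — and it is handled above by a tail bound for $\Pi_{\infty,\varepsilon(t)}$ uniform in $t$. One may obtain this either from Prokhorov's theorem applied to the convergent family $\Pi_{\infty,\varepsilon(t)}\Rightarrow\Pi_0$, or directly from the quadratic lower bound $V(x)\ge\tfrac c2|x|^2-C$ provided by hypothesis (2) (strong convexity of $W$ and boundedness of $\chi$), together with the Laplace estimate $Z(\varepsilon(t))\ge c_1\,\varepsilon(t)^{d}\,e^{-2\varepsilon^{-2}(t)\min V}$ near a global minimum, which bounds $\mathbb{E}_{\Pi_{\infty,\varepsilon(t)}}[|x|^2]$ uniformly in $t$; all the integrals involved are finite because $V_t$ still grows quadratically. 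Note that the precise constant in the hypothesis $g(t)>\tfrac d4\log t$ appears to play no role in this proposition beyond ensuring $\varepsilon(t)\to 0$ and $G^{-1}(t)\to\infty$, its sharp form being dictated by the subsequent spectral-gap and free-energy arguments.
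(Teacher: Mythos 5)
Your proof is correct, but it follows a genuinely different route from the paper's. The paper redoes Laplace's method directly on $Z(t,\varepsilon(t))$ and on $\int\phi\,e^{-2\varepsilon^{-2}(t)V_t}\mathrm{d}\lambda$: it splits at the sublevel set $K=\{V\le 1\}$ and bounds the contribution of $K^c$ by the \emph{full} Gaussian integral $(2\pi a(t)\varepsilon^2(t))^{d/2}e^{-2\varepsilon^{-2}(t)}$, discarding all decay of $V$ beyond $V\ge 1$; this is precisely where the hypothesis $g(t)>\frac d4\log t$ is consumed, to force $a(t)^{d/2}e^{-2\varepsilon^{-2}(t)}\to 0$ so that the tail term is $o(\varepsilon(t)^d)$ and does not swamp the Laplace main term. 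You instead factor $\Pi_{t,\varepsilon(t)}$ as the reweighting of $\Pi_{\infty,\varepsilon(t)}$ by $e^{-\eta(t)|x|^2}$ with $\eta(t)=2/(r+G^{-1}(t))\to 0$, and reduce everything to Hwang's lemma plus the portmanteau inequality on small balls around the $m_i$ (for the ratio of partition functions) and tightness of the convergent family $\{\Pi_{\infty,\varepsilon(t)}\}$ (for the tail of the numerator). This buys two things: you never need to re-derive the Laplace asymptotics, and your observation that the constant $\frac d4$ is irrelevant here --- only $g\to\infty$ and $G^{-1}\to\infty$ matter --- is accurate; the constant is an artifact of the paper's crude Gaussian bound (and is in any case dominated later by the spectral/free-energy requirements). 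One small caveat: your alternative "direct" tail bound via $V(x)\ge\frac c2|x|^2-C$ and $Z(\varepsilon)\gtrsim\varepsilon^d$ does not work as literally stated, since the additive constant $C$ gets amplified to $e^{2C\varepsilon^{-2}}$; one must first split at a sublevel set $\{V\ge 1\}$ (writing $e^{-2\varepsilon^{-2}V}\le e^{-2(\varepsilon^{-2}-1)}e^{-2V}$ there) before invoking integrability. The Prokhorov route you give first is clean and sufficient, so this does not affect the validity of your argument.
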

\begin{proof}
Recall, that $\varepsilon^2(t)a(t)=2(r+G^{-1}(t))$ and 
\begin{eqnarray*}
Z(t,\varepsilon(t)) &=& \int_{\mathbb{R}^d} e^{-2\varepsilon^{-2}(t)V(x)}e^{-2\frac{|x|^2}{\varepsilon^{2}(t)a(t)}}\mathrm{d}x.
\end{eqnarray*}
Let $K$ be the compact set $K:=\{x | V(x)\leq 1\}$. There exists a constant $A>0$ such that $K\subset B(0,A)$. Then, on one hand, we get the upper bound
\begin{eqnarray*}
\int_{K^c} e^{-2\varepsilon^{-2}(t)V(x)}e^{-2\frac{|x|^2}{\varepsilon^2(t)a(t)}}\mathrm{d}x \leq  \int_{\mathbb{R}^d} e^{-2\varepsilon^{-2}(t)}e^{-2\frac{|x|^2}{\varepsilon^2(t)a(t)}}\mathrm{d}x = \frac{(2\pi a(t)\varepsilon^2(t))^{d/2}}{e^{2\varepsilon^{-2}(t)}}.
\end{eqnarray*}
On the other hand, we obtain similarly the lower bound
\begin{eqnarray*}
\int_{K} e^{-2\varepsilon^{-2}(t)V(x)}\mathrm{d}x \geq \int_{K} e^{-2\varepsilon^{-2}(t)V(x)}e^{-2\frac{A}{a(t)\varepsilon^2(t)}}\mathrm{d}x.
\end{eqnarray*}
By Laplace's method, we have (see \cite{H,Z})
$$\int_{K} e^{-2\varepsilon^{-2}(t)V(x)}\mathrm{d}x \, \, \underset{t\to+\infty}{\sim} \; \; \sum_{i=1}^q (\pi\varepsilon^2(t))^{d/2} (\det \nabla^2V(m_i))^{-1/2},$$
where $(m_i)_i$ are the global minima of $V$ (they form a finite set). On the other hand, as $g(t)>\frac{d}{4} \log t$, we have that $G^{-1}(t)$ goes to the infinity and so $$a(t)^{d/2}e^{-2\varepsilon^{-2}(t)} = (G^{-1}(t) g\circ G^{-1}(t))^{d/2} e^{-2g\circ G^{-1}(t)}\underset{t\to \infty}{\longrightarrow} 0.$$ As a consequence, we find the asymptotic equivalence
$$Z(t,\varepsilon(t)) \; \; \underset{t\to+\infty}{\sim} \;\; \sum_i (\pi\varepsilon^2(t))^{d/2} (\det \nabla^2V(m_i))^{-1/2}.$$
By the same method, if $\phi$ is a continuous function, with compact support containing only the global minimum $m_1$, we find 
$$ \int_{R^{d}} \phi(x) e^{-2\varepsilon^{-2}(t)V(x)}e^{-2\frac{|x|^2}{a(t)\varepsilon^2(t)}}\mathrm{d}x \underset{t\to+\infty}{\sim} (\pi\varepsilon^2(t))^{d/2} (\det \nabla ^2V(m_1))^{-1/2}\phi(m_1).$$
This concludes the proof and also give the explicit form of $\Pi_0$.
\end{proof}

To prove that $Z$ converges in distribution towards the global minima of $V$, 
 we follow the approach initiated by Holley and Stroock \cite{hs_rec}, Holley, Kusuoka and Stroock \cite{HKS} and Miclo \cite{M}, using some functional inequalities. We suppose in the
following that $g\circ G^{-1}(t)$ is asymptotically equivalent to $k\log(1+t)$ for $k$ large enough. The remainder of the Section is the following. First, we will show that the measures $(\Pi_{t,\varepsilon(t)},t\ge 0)$ satisfy a logarithmic Sobolev inequality. This will prove useful for the convergence of the free energy to zero. After that, we show that $Z$  converges in distribution to a random variable of law $\Pi_0$.

\begin{defn}
The measure $\mu$ satisfies the logarithmic Sobolev inequality,
with the constant $C$, denoted $LSI(C)$, if for all function $h\in
L^2(\mu)$, we have $$\int h^2\log h^2 \mathrm{d}\mu - \left(\int
h^2 \mathrm{d}\mu\right) \log \left(\int h^2 \mathrm{d}\mu\right)
\leq C \int |\nabla h|^2 \mathrm{d}\mu.$$
\end{defn}

Let $p(s,x,t,y)$ denote the density of the semi-group corresponding to the non-homogeneous Markov process $Z$. We remind that we supposed $V = W+\chi$ and $c$ is the convexity constant of $W$.

\begin{lemma}\label{logSob}
The family of probability measures $(\Pi_{t,\varepsilon(t)}, t\geq 0)$ satisfies a logarithmic Sobolev inequality $LSI(C(t))$, where $C(t) = 2e^{\frac{2}{\varepsilon(t)^2} \osc (\chi)}/c$ (and $\osc(\chi) =
\sup\chi - \inf \chi$).
\end{lemma}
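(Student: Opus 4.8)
The plan is to establish the logarithmic Sobolev inequality for $\Pi_{t,\varepsilon(t)}$ by a perturbation argument, comparing the measure to a log-concave reference measure and controlling the perturbation by its oscillation. Recall that $V_t(x) = V(x) + |x|^2/a(t)$ and $V = W + \chi$, so we may write $V_t = (W + |x|^2/a(t)) + \chi =: \widetilde{W}_t + \chi$. Since $\nabla^2 W \geq c\,\mathrm{Id}$ and $|x|^2/a(t)$ is convex, we have $\nabla^2 \widetilde{W}_t \geq c\,\mathrm{Id}$, so the measure $\widetilde{\Pi}_{t,\varepsilon}(\mathrm{d}x) := \frac{1}{\widetilde{Z}(t,\varepsilon)} e^{-2\varepsilon^{-2}\widetilde{W}_t(x)}\,\mathrm{d}x$ is strictly log-concave with convexity constant $2c/\varepsilon^2$.

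First I would invoke the Bakry-\'Emery criterion: a probability measure of the form $e^{-U}\,\mathrm{d}x$ with $\nabla^2 U \geq \rho\,\mathrm{Id}$, $\rho > 0$, satisfies $LSI(2/\rho)$. Applied to $\widetilde{\Pi}_{t,\varepsilon}$ with $U = 2\varepsilon^{-2}\widetilde{W}_t$ and $\rho = 2c\varepsilon^{-2}$, this gives $LSI(\varepsilon^2/c)$ for $\widetilde{\Pi}_{t,\varepsilon}$. Second, I would use the Holley-Stroock perturbation lemma: if $\mu$ satisfies $LSI(C_0)$ and $\nu = \frac{1}{\mathcal{Z}} e^{-\psi}\,\mu$ with $\psi$ bounded, then $\nu$ satisfies $LSI(C_0 e^{\osc(\psi)})$. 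Here $\Pi_{t,\varepsilon}$ is obtained from $\widetilde{\Pi}_{t,\varepsilon}$ by the density proportional to $e^{-2\varepsilon^{-2}\chi(x)}$, so $\psi = 2\varepsilon^{-2}\chi$ has oscillation $2\varepsilon^{-2}\osc(\chi)$ (using that $\chi$ is compactly supported, hence bounded). Combining the two steps yields $LSI(C(t))$ with $C(t) = \frac{\varepsilon(t)^2}{c}\, e^{2\varepsilon(t)^{-2}\osc(\chi)}$. To match the stated constant $C(t) = 2e^{2\varepsilon(t)^{-2}\osc(\chi)}/c$, one absorbs the harmless factor $\varepsilon(t)^2 \leq 2$ (for $t$ large, since $\varepsilon(t) \to 0$) into the constant $2$, or more cleanly one notes the bound $\varepsilon^2 e^{2\varepsilon^{-2}\osc(\chi)} \leq 2 e^{2\varepsilon^{-2}\osc(\chi)}$ eventually; this cosmetic discrepancy is immaterial for the free-energy argument that follows.

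The main obstacle, and the only place requiring genuine care, is checking that the Bakry-\'Emery and Holley-Stroock statements apply in the non-compact setting of $\mathbb{R}^d$ with the stated regularity: one needs $\chi$ (equivalently $\psi$) to be bounded, which follows from compact support and continuity (indeed $\chi \in \mathcal{C}^2$ from hypothesis (1)-(2)); one needs enough integrability and smoothness of the densities so that the log-Sobolev functional is finite and the constants transfer, which is guaranteed by the growth condition \eqref{growth} ensuring $e^{-2\varepsilon^{-2}V_t}$ is integrable and defines a genuine probability measure with the requisite tail decay. The quadratic confinement $|x|^2/a(t)$ in fact only helps, so the argument is uniform in $t$ up to the explicit $\varepsilon(t)$-dependence. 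I would therefore structure the proof as: (i) decompose $V_t = \widetilde{W}_t + \chi$ and note $\widetilde{W}_t$ is $c$-uniformly convex; (ii) apply Bakry-\'Emery to get $LSI(\varepsilon(t)^2/c)$ for the Gaussian-type perturbed measure; (iii) apply Holley-Stroock with the bounded perturbation $2\varepsilon(t)^{-2}\chi$ to pick up the factor $e^{2\varepsilon(t)^{-2}\osc(\chi)}$; (iv) conclude, adjusting the prefactor to the stated form.
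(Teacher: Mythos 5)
Your proof follows essentially the same route as the paper's: decompose $V_t$ into the uniformly convex part $W+|x|^2/a(t)$ plus the bounded perturbation $\chi$, apply the Bakry--\'Emery criterion to the former and the Holley--Stroock perturbation lemma to the latter. The prefactor discrepancy you flag ($\varepsilon(t)^2/c$ versus $2/c$) is only a normalization issue --- the paper invokes Bakry--\'Emery for the carr\'e du champ $\Gamma_t(f)=\tfrac{\varepsilon(t)^2}{2}|\nabla f|^2$ of the generator $L_{t,\varepsilon}$ rather than for $|\nabla f|^2$ --- and, as you correctly note, it is immaterial for the free-energy argument that follows.
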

\begin{proof}
The point is to use the celebrated Bakry-Emery $\Gamma_2$-criterion
(see \cite{BakEm}). Indeed, to the operator
$L_{t,\varepsilon(t)}$, we can associate the operator ``carr\'e du
champ": for all function $f,g\in \mathcal{C}^\infty$
\begin{equation}
\Gamma_t^V(f,g) := \frac{1}{2} \left(L_{t,\varepsilon(t)}(fg) -
fL_{t,\varepsilon(t)}g - gL_{t,\varepsilon(t)}f \right).
\end{equation}
Then, we define the operator $\Gamma_2^V$ as
\begin{equation}
\Gamma_2^V(t)(f) := \frac{1}{2} \left(L_{t,\varepsilon(t)}
\Gamma_t^V(f,f) - 2 \Gamma_t^V( f,L_{t,\varepsilon(t)}f) \right).
\end{equation}
The $\Gamma_2$-criterion asserts that if there exists a positive constant $C$ such that $\Gamma_2^{V_t} \geq C \Gamma_t^{V_t}$, then $\Pi_{t,\varepsilon(t)}$ satisfies a logarithmic Sobolev inequality, $LSI(2/C)$. 
An easy calculation, for all function $f$ of class
$\mathcal{C}^\infty$, leads to $$\Gamma_t^V(f,f) = \frac{\varepsilon(t)^2}{2}|\nabla f|^2$$ and $$\Gamma_2^V(t)(f) =
\frac{\varepsilon(t)^2}{2}(\nabla f, \nabla^2 V \nabla f)+ \frac{\varepsilon(t)^4}{4} ||\nabla^2 f||^2 +
\frac{\varepsilon(t)^2}{2 a(t)} |\nabla f|^2.$$ 
Recall, the decomposition $V=W +\chi$ where $W$ is
strictly convex with a constant $c$ and $\chi$ is a compactly
supported function. We apply Bakry and Emery's criterion to
the function $W + |x|^2/a(t)$ and we get that $\Gamma_2^W(t)(f) \geq c
\Gamma_t^W(f)$. So, the probability measure
$e^{-2\varepsilon^{-2}(t) (W(x) + |x|^2/a(t))} /Z$ satisfies the
inequality $LSI(2/c)$. We conclude, by Holley and Stroock's perturbation lemma \cite{HS}, that the measure
$\Pi_{t,\varepsilon(t)}$ satisfies a $LSI(C(t)$ inequality with $C(t) \le 2
e^{\frac{2}{\varepsilon(t)^2} \osc \chi} /c$.
\end{proof}
Let us denote by $(P_s^{t,\varepsilon},s\geq 0)$ the $\mathcal{C}^0$ semigroup corresponding to the non-homogeneous Markov process $Z$. For any function $f$ and any probability measure $\mu$ (such that $f$ is $\mu$-integrable), we note $<f>_{\mu} := \int_{\mathbb{R}^d} f \mathrm{d}\mu$.
\begin{lemma}\label{spectralgap}
The probability measure $\Pi_{t,\varepsilon}$ admits a spectral gap:
there exists a constant $\lambda_2(\varepsilon)>0$ such that for
all $s\geq 0$, all continuous $f\in L^2(\Pi_{t,\varepsilon})$
$$||P_s^{t,\varepsilon} f - \Pi_{t,\varepsilon} f||_{L^2(\Pi_{t,\varepsilon})}
\leq e^{-\lambda_2(t,\varepsilon) s}
\mathrm{Var}_{\Pi_{t,\varepsilon}}(f).$$
\end{lemma}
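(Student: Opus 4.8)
The plan is to obtain the estimate from two ingredients that are essentially already in place: the self-adjointness of $L_{t,\varepsilon}$ in $L^2(\Pi_{t,\varepsilon})$ — which grants a spectral calculus for the (frozen-in-$t$) semigroup $(P_s^{t,\varepsilon})_{s\ge 0}$ generated by $L_{t,\varepsilon}$ — and the logarithmic Sobolev inequality of Lemma~\ref{logSob}, which I would weaken to a Poincar\'e inequality in order to exhibit the explicit positive spectral gap $\lambda_2(t,\varepsilon)$.

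First I would record the Dirichlet form identity. Since $\Pi_{t,\varepsilon}$ is the reversible, hence invariant, measure of $L_{t,\varepsilon}$, integrating by parts and using $\Gamma_t^V(f,f)=\tfrac{\varepsilon(t)^2}{2}|\nabla f|^2$ (computed in the proof of Lemma~\ref{logSob}) gives, for $f$ in a suitable core and then by density,
$$\mathcal E_{t,\varepsilon}(f,f):=-\langle f,L_{t,\varepsilon}f\rangle_{L^2(\Pi_{t,\varepsilon})}=\frac{\varepsilon(t)^2}{2}\int_{\mathbb R^d}|\nabla f|^2\,\mathrm d\Pi_{t,\varepsilon}.$$
Then, testing $LSI(C(t))$ with $h=1+\delta f$ and letting $\delta\to0$ yields the Poincar\'e inequality $\mathrm{Var}_{\Pi_{t,\varepsilon}}(f)\le\tfrac{C(t)}{2}\int_{\mathbb R^d}|\nabla f|^2\,\mathrm d\Pi_{t,\varepsilon}=\tfrac{C(t)}{\varepsilon(t)^2}\,\mathcal E_{t,\varepsilon}(f,f)$, that is,
$$\mathcal E_{t,\varepsilon}(f,f)\ge\lambda_2(t,\varepsilon)\,\mathrm{Var}_{\Pi_{t,\varepsilon}}(f),\qquad \lambda_2(t,\varepsilon):=\frac{\varepsilon(t)^2}{C(t)}=\frac{c}{2}\,\varepsilon(t)^2e^{-2\osc(\chi)/\varepsilon(t)^2}>0.$$
Equivalently, the spectrum of $-L_{t,\varepsilon}$ on the orthogonal complement of the constants lies in $[\lambda_2(t,\varepsilon),\infty)$. (Positivity of some such gap is already contained in the discreteness of the spectrum recalled after \eqref{probainv}; I would nonetheless go through the $LSI$ since the explicit dependence of $\lambda_2(t,\varepsilon)$ on $\varepsilon(t)$ is what will be needed afterwards, in the free-energy estimate.)

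Then I would run the classical semigroup argument. Given a continuous $f\in L^2(\Pi_{t,\varepsilon})$, set $\varphi:=f-\Pi_{t,\varepsilon}f$, so that $\Pi_{t,\varepsilon}\varphi=0$, $\|\varphi\|^2_{L^2(\Pi_{t,\varepsilon})}=\mathrm{Var}_{\Pi_{t,\varepsilon}}(f)$, and, by invariance of $\Pi_{t,\varepsilon}$, $P_s^{t,\varepsilon}\varphi=P_s^{t,\varepsilon}f-\Pi_{t,\varepsilon}f$ remains of zero mean for every $s$. The function $u(s):=\|P_s^{t,\varepsilon}\varphi\|^2_{L^2(\Pi_{t,\varepsilon})}$ is $\mathcal C^1$ (spectral calculus) with
$$u'(s)=2\langle P_s^{t,\varepsilon}\varphi,\,L_{t,\varepsilon}P_s^{t,\varepsilon}\varphi\rangle_{L^2(\Pi_{t,\varepsilon})}=-2\,\mathcal E_{t,\varepsilon}(P_s^{t,\varepsilon}\varphi,P_s^{t,\varepsilon}\varphi)\le-2\lambda_2(t,\varepsilon)\,u(s),$$
the last inequality being the Poincar\'e inequality applied to $P_s^{t,\varepsilon}\varphi$. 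Gr\"onwall's lemma gives $u(s)\le e^{-2\lambda_2(t,\varepsilon)s}u(0)$, hence
$$\|P_s^{t,\varepsilon}f-\Pi_{t,\varepsilon}f\|_{L^2(\Pi_{t,\varepsilon})}\le e^{-\lambda_2(t,\varepsilon)s}\,\mathrm{Var}_{\Pi_{t,\varepsilon}}(f)^{1/2},\qquad\text{i.e.}\qquad \mathrm{Var}_{\Pi_{t,\varepsilon}}(P_s^{t,\varepsilon}f)\le e^{-2\lambda_2(t,\varepsilon)s}\,\mathrm{Var}_{\Pi_{t,\varepsilon}}(f),$$
which is the asserted bound.

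I do not expect a genuine obstacle here: the argument is the textbook route ``$LSI\Rightarrow$ Poincar\'e $\Rightarrow$ exponential $L^2$-contraction of the semigroup''. The only points that deserve a line of justification are the soft analytic facts that $(P_s^{t,\varepsilon})_{s\ge0}$ is a strongly continuous Markov semigroup on $L^2(\Pi_{t,\varepsilon})$ with generator $L_{t,\varepsilon}$ admitting $\mathcal C^\infty_c(\mathbb R^d)$ as a core, and that $u$ is differentiable with the stated derivative — both of which follow from the self-adjointness of $L_{t,\varepsilon}$, itself guaranteed by the growth and convexity hypotheses on $V$ (which also ensure $\Pi_{t,\varepsilon}$ is a bona fide probability measure with finite second moment).
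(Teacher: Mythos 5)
Your proof is correct and follows essentially the same route as the paper, whose entire argument is the one-line observation that the logarithmic Sobolev inequality of Lemma~\ref{logSob} implies the spectral gap (Poincar\'e) inequality; you merely supply the standard details (the Dirichlet form identity, the linearization $h=1+\delta f$, and the Gr\"onwall semigroup argument) that the paper leaves implicit. Note that your derivation yields the correctly homogeneous bound with $\mathrm{Var}_{\Pi_{t,\varepsilon}}(f)^{1/2}$ on the right-hand side, whereas the statement as printed has $\mathrm{Var}_{\Pi_{t,\varepsilon}}(f)$, an apparent typographical slip.
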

\begin{proof}
As $\Pi_{t,\varepsilon(t)}$ satisfies the inequality $LSI(C(t))$, we get the spectral inequality
with constant $\lambda_2(t,\varepsilon(t))>0$.
\end{proof}

We want to use the previous functional inequalities in order to prove the convergence of $Z_t$ (and thus $Y_t$) towards the global minima of $V$.
\begin{defn}
The free energy (up to an additive constant), or relative Kullback information, of a measure $P$ absolutely continuous with respect to $\Pi$ is: $H(P|\Pi) := \int \mathrm{d}P\log\frac{\mathrm{d}P}{\mathrm{d}\Pi}.$ 
Equivalently, if we suppose that $P$ (respectively $\Pi$) has a density $p$ (respectively $\pi$) with respect to the Lebesgue measure $\lambda$, then we define
\begin{equation}
H(p|\Pi) := \int p \log\frac{p}{\pi} \mathrm{d}\lambda.
\end{equation}
\end{defn}

\begin{propo}\label{ineq}
For all initial $t_0, x_0$, we get
\begin{eqnarray*}
\frac{\mathrm{d}}{\mathrm{d}t}H\left(p(t_0,x_0,t,\cdot)|\Pi_{t,\varepsilon(t)}\right)
&\leq & -\frac{2}{C(t)} \varepsilon(t)^2
H\left(p(t_0,x_0,t,\cdot)|\Pi_{t,\varepsilon(t)}\right)\\
&-& 4 \dot{\varepsilon}(t) \varepsilon(t)^{-3}
\int p(t_0,x_0,t,\cdot)(V_t -<V_t>_{\Pi_{t,\varepsilon(t)}}) \mathrm{d}\lambda\\
&+& \frac{2}{\varepsilon(t)^2} \int p(t_0,x_0,t,\cdot) (\dot{V}_t
-<\dot{V}_t>_{\Pi_{t,\varepsilon(t)}}) \mathrm{d}\lambda.
\end{eqnarray*}
\end{propo}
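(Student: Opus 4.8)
The plan is to differentiate the free energy $H(p(t_0,x_0,t,\cdot)\,|\,\Pi_{t,\varepsilon(t)})$ in time, carefully separating two sources of time-dependence: the evolution of the density $p_t := p(t_0,x_0,t,\cdot)$ under the forward Kolmogorov (Fokker--Planck) equation associated to the generator $L_{t,\varepsilon(t)}$, and the drift of the reference measure $\Pi_{t,\varepsilon(t)}$ itself (through both $\varepsilon(t)$ and $V_t(x)=V(x)+|x|^2/a(t)$). Writing $\pi_t$ for the density of $\Pi_{t,\varepsilon(t)}$, we have
\begin{equation*}
H(p_t|\Pi_{t,\varepsilon(t)}) = \int p_t \log p_t \,\mathrm{d}\lambda - \int p_t \log \pi_t\,\mathrm{d}\lambda,
\end{equation*}
so that $\frac{\mathrm d}{\mathrm dt}H = \int \dot p_t(\log p_t - \log\pi_t)\,\mathrm d\lambda - \int p_t\,\partial_t\log\pi_t\,\mathrm d\lambda$, the $\int\dot p_t\,\mathrm d\lambda$ and similar mass terms vanishing since $\int p_t = 1$.

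First I would handle the term $\int \dot p_t(\log p_t-\log\pi_t)\,\mathrm d\lambda$. Using the Fokker--Planck equation $\dot p_t = L_{t,\varepsilon(t)}^* p_t = \frac12\varepsilon(t)^2\Delta p_t + \nabla\cdot(p_t\nabla V_t)$ and integrating by parts, this becomes the usual (negative) entropy dissipation $-\frac{\varepsilon(t)^2}{2}\int p_t\bigl|\nabla\log\frac{p_t}{\pi_t}\bigr|^2\,\mathrm d\lambda$; here one uses that $\pi_t\propto e^{-2\varepsilon(t)^{-2}V_t}$ is exactly the invariant density of the frozen dynamics, so $\frac12\varepsilon(t)^2\nabla\log\pi_t = -\nabla V_t$. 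Then I would apply the logarithmic Sobolev inequality $LSI(C(t))$ from Lemma~\ref{logSob} (in its equivalent entropy–Fisher-information form, with $h^2 = p_t/\pi_t$) to bound this dissipation above by $-\frac{2}{C(t)}\varepsilon(t)^2 H(p_t|\Pi_{t,\varepsilon(t)})$, which is the first term on the right-hand side of the claimed inequality.

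Next I would compute $-\int p_t\,\partial_t\log\pi_t\,\mathrm d\lambda$. Since $\log\pi_t = -2\varepsilon(t)^{-2}V_t(x) - \log Z(t,\varepsilon(t))$, the time derivative splits into a piece from differentiating the prefactor $\varepsilon(t)^{-2}$, namely $+4\dot\varepsilon(t)\varepsilon(t)^{-3}V_t(x)$, and a piece from differentiating $V_t$ itself, namely $-2\varepsilon(t)^{-2}\dot V_t(x)$ (where $\dot V_t(x) = \partial_t(|x|^2/a(t)) = -\dot a(t)|x|^2/a(t)^2$); the $\partial_t\log Z$ contribution is a constant in $x$ and so, after integrating against $p_t$ and subtracting, exactly produces the centering by $\langle\cdot\rangle_{\Pi_{t,\varepsilon(t)}}$ in the two remaining terms. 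Indeed $-\partial_t\log Z(t,\varepsilon(t)) = \langle \partial_t(2\varepsilon^{-2}V_t)\rangle_{\Pi_{t,\varepsilon(t)}} = -4\dot\varepsilon\varepsilon^{-3}\langle V_t\rangle_{\Pi_{t,\varepsilon(t)}} + 2\varepsilon^{-2}\langle\dot V_t\rangle_{\Pi_{t,\varepsilon(t)}}$, which when combined gives precisely the last two lines of the statement. The main obstacle I anticipate is purely technical rather than conceptual: justifying the interchange of differentiation and integration and the integrations by parts (decay of $p_t$ and its derivatives at infinity, integrability of $V_t p_t$ and $|\nabla\log(p_t/\pi_t)|^2 p_t$), which one controls using the tightness estimates of Lemma~\ref{tight} and Corollary~\ref{tension} together with standard parabolic regularity for $p(t_0,x_0,t,\cdot)$; one typically argues first for a regularized/truncated problem and then passes to the limit. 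Everything else is bookkeeping of the chain rule applied to $\log\pi_t$.
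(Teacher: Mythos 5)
Your proposal is correct and follows essentially the same route as the paper: split $\frac{\mathrm{d}}{\mathrm{d}t}H$ into the Fokker--Planck dissipation term and the drift of $\log\pi_{t,\varepsilon(t)}$, integrate by parts to obtain $-\frac{\varepsilon(t)^2}{2}\int p_t\left|\nabla\log\frac{p_t}{\pi_{t,\varepsilon(t)}}\right|^2\mathrm{d}\lambda$, bound it via the $LSI(C(t))$ of Lemma~\ref{logSob} applied to $h_t=\sqrt{p_t/\pi_{t,\varepsilon(t)}}$, and observe that the $\partial_t\log Z$ term produces exactly the centering by $\langle\cdot\rangle_{\Pi_{t,\varepsilon(t)}}$. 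The computations of $\dot{\pi}_{t,\varepsilon(t)}/\pi_{t,\varepsilon(t)}$ and $\nabla\pi_{t,\varepsilon(t)}/\pi_{t,\varepsilon(t)}$ match the paper's \eqref{pi_point} and \eqref{nabla_pi}, so no gap remains.
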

\begin{proof}
We adapt the proof of Holley and Stroock \cite{hs_rec} (and Miclo \cite{M}). In order to shorten notation, let $p_t := p(t_0,x_0,t,\cdot)$ be the distribution law of the process $Z_t$, knowing that $Z_{t_0} = x_0$ (the existence of a density $p_t$ follows from Girsanov theorem). Recall, that the family of probability measures $(\Pi_{t,\varepsilon(t)},t\geq 0)$ satisfies a family of Sobolev logarithmic inequalities $(LSI(C(t)), t\ge 0)$ and $\Pi_{t,\varepsilon(t)}( \mathrm{d}x) = \pi_{t,\varepsilon(t)}(x) \lambda(\mathrm{d}x)$. Let us introduce $$h_t := \sqrt{\frac{p_t}{\pi_{t,\varepsilon(t)}}},$$ satisfying in particular $\int h_t^2 \mathrm{d}\Pi_{t,\varepsilon(t)} = 1$. So, by Lemma \ref{logSob}, there exists a constant $C(t)$ such that
\begin{equation}\label{LSI}
H(p_t| \Pi_{t,\varepsilon(t)}) = \int p_t \log
\frac{p_t}{\pi_{t,\varepsilon(t)}} \mathrm{d}\lambda \leq C(t)
\int |\nabla h_t|^2 \mathrm{d}\Pi_{t,\varepsilon(t)}.
\end{equation}
Computing the gradient function of $h_t$, we get
$$\nabla h_t = \frac{\sqrt{p_t}}{2\pi_{t,\varepsilon(t)}} \left( \frac{\nabla p_t}{p_t} + 2 \frac{\nabla V_t}{\varepsilon(t)^2}\right).$$ We put this last estimate in the preceding inequality to get
\begin{equation}\label{eq:H}
H(p_t|\Pi_{t,\varepsilon(t)}) \leq \frac{C(t)}{4} \int p_t \left| \frac{\nabla p_t}{p_t} + 2\frac{\nabla V_t}{\varepsilon(t)^2} \right|^2 \mathrm{d}\lambda. 
\end{equation}
Moreover, the time-derivative of the free energy $H$ is
\begin{equation}\label{deriv}
\frac{\mathrm{d}}{\mathrm{d}t} H(p_t|\Pi_{t,\varepsilon(t)}) = \int \dot{p}_t \log \frac{p_t}{\pi_{t,\varepsilon(t)}}
\mathrm{d}\lambda - \int p_t \frac{\dot{\pi}_{t,\varepsilon(t)}}{\pi_{t,\varepsilon(t)}} \mathrm{d}\lambda.
\end{equation}
Our strategy is to find a upper bound for both terms of \eqref{deriv}. Kolmogorov forward equation reads
\begin{equation}\label{kolmo}
\dot{p}_t = \frac{1}{2} \varepsilon(t)^2 \Delta p_t + (\nabla p_t,
\nabla V_t) +\, \text{div}(V_t)p_t = \nabla \cdot \left(\frac{1}{2} \varepsilon(t)^2
\nabla p_t + p_t \nabla V_t\right).
\end{equation}
We have the following estimates on $\pi_{t,\varepsilon(t)}$:
\begin{equation} \frac{\dot{\pi}_{t,\varepsilon(t)}}{\pi_{t,\varepsilon(t)}} = 4\frac{\dot{\varepsilon}(t)}{\varepsilon(t)^3} \left(V_t -<V_t>_{\pi_{t,\varepsilon(t)}} \right) - \frac{2}{\varepsilon(t)^2} (\dot{V}_t - <\dot{V}_t>_{\Pi_{t,\varepsilon(t)}})\label{pi_point},
\end{equation}
\begin{equation}
\frac{\nabla \pi_{t,\varepsilon(t)}}{\pi_{t,\varepsilon(t)}} = -2\frac{\nabla V_t}{\varepsilon(t)^2} \label{nabla_pi}.
\end{equation}
Putting all the pieces together, 
integrating by parts and using \eqref{eq:H}, we get
\begin{eqnarray*}
\int \dot{p}_t \log\frac{p_t}{\pi_{t,\varepsilon(t)}} \mathrm{d}\lambda &=& \int \log \frac{p_t}{\pi_{t,\varepsilon(t)}} \nabla \cdot \left(\frac{1}{2} \varepsilon(t)^2 \nabla p_t + p_t
\nabla V_t\right) \mathrm{d}\lambda\\
&=& -\int \left(\frac{\nabla p_t}{p_t} - \frac{\nabla \pi_{t,\varepsilon(t)}}{\pi_{t,\varepsilon(t)}}, \frac{1}{2} \varepsilon(t)^2 \nabla p_t + p_t \nabla V_t\right) \mathrm{d}\lambda\\
&=& - \frac{\varepsilon(t)^2}{2} \int p_t \left|\frac{\nabla p_t}{p_t}+ 2 \frac{\nabla V_t}{\varepsilon(t)^2}\right|^2 \mathrm{d}\lambda\\
&\leq & -\frac{2}{C(t)} \varepsilon(t)^2 H(p_t |\Pi_{t,\varepsilon(t)}).
\end{eqnarray*}
On the other hand, we obtain for the second term of \eqref{deriv}:
\begin{eqnarray*}
\int p_t\frac{\dot{\pi}_{t,\varepsilon(t)}}{\pi_{t,\varepsilon(t)}} \mathrm{d}\lambda = 4
\frac{\dot{\varepsilon}(t)}{\varepsilon(t)^3} \int p_t (V_t - <V_t>_{\Pi_{t,\varepsilon(t)}}) \mathrm{d}\lambda 
- \frac{2}{\varepsilon(t)^2} \int p_t (\dot{V}_t - <\dot{V_t}>_{\Pi_{t,\varepsilon(t)}}) \mathrm{d}\lambda.
\end{eqnarray*}
Putting all the pieces together in \eqref{deriv} leads to the result.
\end{proof}

\begin{lemma}\label{majoration}
For all $t\geq 0$, the quantity $<|x|^2>_{\Pi_{t,\varepsilon(t)}}$ is bounded.
\end{lemma}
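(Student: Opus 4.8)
The plan is to reduce the estimate on $<|x|^2>_{\Pi_{t,\varepsilon(t)}}$ to one on $<V>_{\Pi_{t,\varepsilon(t)}}$ and then to control the latter by a Laplace-type argument in the spirit of the proof of Proposition~\ref{convrecuit}. First I would use the strict convexity of $W$: since $\nabla^2 W\geq c\,Id$, a second-order Taylor expansion of $W$ at the origin together with Young's inequality gives $W(x)\geq\frac c4|x|^2-C_0$ for a constant $C_0$ depending only on $c$, $W(0)$ and $\nabla W(0)$; as $\chi$ is bounded (compact support, $C^1$), this yields the pointwise bound $|x|^2\leq\frac4c V(x)+C_1$. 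Integrating this against $\Pi_{t,\varepsilon(t)}$, it suffices to prove $\sup_{t\geq0}<V>_{\Pi_{t,\varepsilon(t)}}<\infty$; the same quadratic lower bound also makes all the integrals below finite, since $e^{-2\varepsilon^{-2}(t)V_t}\leq e^{-2\varepsilon^{-2}(t)V}$ has Gaussian decay.

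Write $<V>_{\Pi_{t,\varepsilon(t)}}=N(t)/D(t)$ with $D(t)=Z(t,\varepsilon(t))$ the partition function and $N(t)=\int V\,e^{-2\varepsilon^{-2}(t)V_t}\,\mathrm{d}x$, and recall $2\varepsilon^{-2}(t)V_t(x)=2\,g\circ G^{-1}(t)\,V(x)+\frac{2|x|^2}{r+G^{-1}(t)}$. I would bound $D(t)$ from below exactly as in the proof of Proposition~\ref{convrecuit}: restricting the integral to a small ball $B(m,\delta)$ around a global minimiser $m$ of $V$ and using $r+G^{-1}(t)\geq r$ gives $D(t)\geq c_\delta\,e^{-2\varepsilon^{-2}(t)(V_*+\rho(\delta))}$, where $V_*:=\min V$, $\rho(\delta):=\sup_{B(m,\delta)}V-V_*\to0$ as $\delta\to0$, and $c_\delta>0$ does not depend on $t$. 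For the numerator I would split at a level $\eta>0$: over $\{V\leq V_*+\eta\}$ the contribution is at most $(V_*+\eta)\,D(t)$, while over $\{V>V_*+\eta\}$, for all $t$ large enough that $2\varepsilon^{-2}(t)=2\,g\circ G^{-1}(t)\geq1$, the elementary bound $e^{-2\varepsilon^{-2}(t)s}\leq e^{-s}$ for $s\geq0$ lets me dominate the integrand by $e^{-2\varepsilon^{-2}(t)(V_*+\eta)}\,V\,e^{-(V-V_*-\eta)}$, whose integral over $\mathbb{R}^d$ is a finite constant independent of $t$.

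Dividing, the $\{V\leq V_*+\eta\}$ part contributes at most $V_*+\eta$ to the ratio, and the $\{V>V_*+\eta\}$ part contributes a $t$-independent constant times $e^{-2\varepsilon^{-2}(t)(\eta-\rho(\delta))}$; choosing $\delta$ small enough that $\rho(\delta)\leq\eta$ makes this last factor $\leq1$ for every $t$, so $<V>_{\Pi_{t,\varepsilon(t)}}$ is bounded uniformly over all $t$ with $g\circ G^{-1}(t)\geq\frac12$. Since $g\circ G^{-1}(t)\to\infty$, the complementary set of times is bounded, and on it $t\mapsto<V>_{\Pi_{t,\varepsilon(t)}}$ is finite and continuous, hence bounded; combined with $|x|^2\leq\frac4c V(x)+C_1$ this gives the claim.

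The main obstacle is the matching of exponential rates: both $N(t)$ and $D(t)$ carry a factor of order $e^{-2\varepsilon^{-2}(t)V_*}$ which degenerates as $t\to\infty$, and a uniform bound survives only because the localisation radius $\delta$ in the lower bound for $Z(t,\varepsilon(t))$ can be taken small relative to the cut-off level $\eta$, so that the exponential loss $\rho(\delta)$ is dominated by $\eta$. Getting that bookkeeping right — and checking the residual integrals are genuinely $t$-independent — is the delicate point; everything else is routine Gaussian-tail estimation.
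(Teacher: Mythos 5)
Your proof is correct and follows essentially the same route as the paper's: split $\mathbb{R}^d$ by a level set of $V$, bound the contribution near the minima trivially, and show the tail integral is exponentially smaller than the partition function $Z(t,\varepsilon(t))$. The only cosmetic differences are that you first reduce $|x|^2$ to $V$ via the convexity of $W$ and re-derive a crude lower bound on $Z(t,\varepsilon(t))$, whereas the paper bounds $\int_{K^c}|x|^2e^{-2\varepsilon^{-2}(t)V_t}\,\mathrm{d}x$ directly and quotes the Laplace asymptotics of $Z(t,\varepsilon(t))$ already established in Proposition \ref{convrecuit}.
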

\begin{proof}
Let $K$ be the compact set $K:=\{x | V(x)\leq \eta\}$ where $\eta$
is a given positive constant. As $\Pi_{t,\varepsilon(t)}$
converges weakly to $\Pi_0$ we only need to prove that
$<|x|^2\1_{K^c}>_{\Pi_{t,\varepsilon(t)}}$ is bounded. 
For any $1\geq \varepsilon(t)^2$, we have
\begin{eqnarray*}
\int_{K^c} |x|^2 e^{-2\varepsilon^{-2}(t)V_t(x)} 
\mathrm{d}x
&\leq& \int_{K^c} |x|^2 e^{-2V(x)} e^{-2V(x)(\varepsilon^{-2}(t)-1)}\mathrm{d}x\\
&\leq&  e^{-2\eta(\varepsilon^{-2}(t)-1)} \int_{K^c} |x|^2 e^{-2V(x)}\mathrm{d}x
\end{eqnarray*}
But, as proved in Proposition \ref{convrecuit}, we have the asymptotic equivalence $$Z(t,\varepsilon(t)) \; \;
\underset{t\infty}{\sim}\;\; \sum_i (\pi\varepsilon^2(t))^{d/2}
(\mathrm{det} \nabla^2 V(m_i))^{-1/2},$$ which implies 
$ <|x|^2\1_{K^c}>_{\Pi_{t,\varepsilon(t)}} \leq \widetilde{C} \varepsilon^{-d}(t) e^{-2\eta\varepsilon^{-2}(t)} \underset{t\rightarrow\infty}{\rightarrow}0.$
\end{proof}

In order to conclude the convergence of the free energy to zero, we will use an easy result of Miclo \cite{M}
\begin{lemma}\label{gronwall}(Miclo, lemma 6)
Let $f: [0,\infty[ \rightarrow \mathbb{R}_+$ be a continuous
function such that a.s. $$f'(t) \leq \alpha_t -\beta_t f(t),$$
where $\alpha$ and $\beta$ are two continuous non-negative
functions such that $\int^\infty \beta_t \mathrm{d}t = \infty$ and
$\underset{t\rightarrow \infty}{\lim} \alpha_t / \beta_t = 0$.
Then $\underset{t\rightarrow \infty}{\lim} f(t) = 0$.
\end{lemma}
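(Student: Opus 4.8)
The plan is to turn the differential inequality into an integral one by the classical integrating-factor trick, and then play the two hypotheses against each other: the divergence of $\int\beta$ is used to annihilate the contribution of the initial value, while the decay $\alpha_t/\beta_t\to 0$ is used to control the forcing term. Since nothing below involves the probabilistic structure, I would fix once and for all a realization (i.e. an $\omega$ in the full-measure event on which $\alpha,\beta,f$ satisfy the stated assumptions) and argue deterministically. Set $B(t):=\int_0^t\beta_s\,\mathrm{d}t$. Because $f$ is (absolutely) continuous and $B$ is $C^1$, the product $f(t)e^{B(t)}$ is absolutely continuous, and
$$
\frac{\mathrm{d}}{\mathrm{d}t}\bigl(f(t)e^{B(t)}\bigr)=\bigl(f'(t)+\beta_t f(t)\bigr)e^{B(t)}\le \alpha_t e^{B(t)}.
$$
Integrating between an arbitrary $t_0$ and $t>t_0$ and multiplying by $e^{-B(t)}$ gives the basic estimate
$$
f(t)\le f(t_0)\,e^{-(B(t)-B(t_0))}+\int_{t_0}^{t}\alpha_s\,e^{-(B(t)-B(s))}\,\mathrm{d}s .
$$

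Next I would fix $\delta>0$. Since $\alpha_s/\beta_s\to 0$, there is $t_0=t_0(\delta)$ such that $\alpha_s\le\delta\,\beta_s$ for all $s\ge t_0$; as $\alpha,\beta\ge 0$ and continuous, the integrand above is a legitimate nonnegative continuous function, so
$$
\int_{t_0}^{t}\alpha_s\,e^{-(B(t)-B(s))}\,\mathrm{d}s\le\delta\int_{t_0}^{t}\beta_s\,e^{-(B(t)-B(s))}\,\mathrm{d}s
=\delta\bigl(1-e^{-(B(t)-B(t_0))}\bigr)\le\delta,
$$
where I used $\frac{\mathrm{d}}{\mathrm{d}s}e^{-(B(t)-B(s))}=\beta_s e^{-(B(t)-B(s))}$. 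For the first term, the hypothesis $\int^{\infty}\beta_s\,\mathrm{d}s=\infty$ forces $B(t)-B(t_0)\to\infty$ as $t\to\infty$, hence $f(t_0)e^{-(B(t)-B(t_0))}\to 0$. Combining the two bounds yields $\limsup_{t\to\infty}f(t)\le\delta$, and since $\delta>0$ is arbitrary and $f\ge 0$, we conclude $\lim_{t\to\infty}f(t)=0$.

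I do not expect a genuine obstacle here: the lemma is a soft Gronwall-type statement and the estimates are elementary. The only points needing a word of care are (i) the interpretation of the differential inequality — one reads it with $f$ differentiable, or more generally absolutely continuous, so that the fundamental theorem of calculus applies to $f(t)e^{B(t)}$; and (ii) the order of quantifiers: one must choose the splitting time $t_0$ as a function of the target precision $\delta$ \emph{before} letting $t\to\infty$, rather than attempting a bound on the forcing term that is uniform in time from the outset. Everything else is routine.
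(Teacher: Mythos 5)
Your proof is correct and follows essentially the same route as the paper's: the integrating factor $e^{\int_0^t\beta_s\,\mathrm{d}s}$, the choice of a splitting time $t_0$ so that $\alpha_s\le\delta\beta_s$ beyond it, the bound of the forcing integral by $\delta$, and the divergence of $\int\beta$ to kill the initial term. The only cosmetic difference is that you integrate directly from $t_0$ whereas the paper integrates from $0$ and then splits the forcing integral at $t_0$; both are fine.
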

\begin{proof}
One has to adapt Gronwall's lemma. Let $g(t) =
f(t) \exp{\left(\int_0^t \beta_s \mathrm{d}s\right)}$, which is continuous. We get a.s. 
$$g'(t) \leq \alpha_t \exp{\left(\int_0^t \beta_s
\mathrm{d}s\right)}.$$ We therefore obtain that $$g(t) \leq g(0) +
\int_0^t \alpha_s \exp{\left(\int_0^s \beta_u \mathrm{d}u\right)}
\mathrm{d}s.$$ Consequently, for all $t_0\geq 0$, we have
\begin{eqnarray*}
f(t) \leq f(0) e^{-\int_0^t \beta_u \mathrm{d}u} + e^{-\int_0^t \beta_u \mathrm{d}u} \left\{\int_0^{t_0} \alpha_s e^{\int_0^s \beta_u \mathrm{d}u} \, \mathrm{d}s + \int_{t_0}^t \alpha_s e^{\int_0^s \beta_u \mathrm{d}u}\, \mathrm{d}s\right\}.
\end{eqnarray*}
Let $\eta>0$. We choose $t_0$ such that for all $t\geq t_0$, we have $\alpha_s \leq \eta \beta_s$. We thus find a upper bound, for any $t\geq t_0$, for the last term of the preceding inequality:
\begin{eqnarray*}
e^{-\int_0^t \beta_u \mathrm{d}u} \int_{t_0}^t \eta \beta_s e^{\int_0^s \beta_u \mathrm{d}u}\mathrm{d}s
\leq e^{-\int_0^t \beta_u \mathrm{d}u}\eta \left(e^{\int_0^t \beta_u \mathrm{d}u} - e^{\int_0^{t_0} \beta_u \mathrm{d}u}\right) \leq \eta.
\end{eqnarray*}
As the two first terms go to zero when $t$ goes to the infinity, we get $\underset{t\rightarrow \infty}{\limsup}f(t) \leq \eta$. 
\end{proof}

\begin{coro}\label{gronwall2}
If $\underset{t\to+\infty}{\lim} \frac{\alpha'_t}{\alpha_t\beta_t}-\frac{\beta'_t}{\beta_t^2} = 0$ and $\underset{t\to+\infty}{\lim} \frac{\alpha_t}{\beta_t}e^{\int_0^t \beta_s \mathrm{d}s} = + \infty,$ then we have asymptotically (when $t\rightarrow \infty$) $$\int_{0}^t \alpha_s e^{\int_0^s \beta_u\mathrm{d}u}\mathrm{d}s \sim \frac{\alpha_t}{\beta_t}e^{\int_0^t\beta_s\mathrm{d}s}.$$
\end{coro}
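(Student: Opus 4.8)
The plan is to read this as a continuous Stolz--Cesàro (L'Hôpital) statement and prove it by comparing the derivative of the integral $I(t):=\int_0^t\alpha_s e^{\int_0^s\beta_u\mathrm{d}u}\mathrm{d}s$ with that of the candidate equivalent $J(t):=\frac{\alpha_t}{\beta_t}e^{\int_0^t\beta_s\mathrm{d}s}$; the goal is to show $I(t)/J(t)\to 1$. Since the ratios $\alpha_t'/(\alpha_t\beta_t)$ and $\beta_t'/\beta_t^2$ enter the hypothesis, $\alpha_t$ and $\beta_t$ are nonzero for $t$ large and, being nonnegative as in Lemma \ref{gronwall}, strictly positive there; fix $t_1$ past which this holds.

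First I would differentiate. One has $I'(t)=\alpha_t e^{\int_0^t\beta_s\mathrm{d}s}$, while the product and chain rules give
\[
J'(t)=\Bigl(\tfrac{\alpha_t'\beta_t-\alpha_t\beta_t'}{\beta_t^2}+\alpha_t\Bigr)e^{\int_0^t\beta_s\mathrm{d}s}=\alpha_t e^{\int_0^t\beta_s\mathrm{d}s}\Bigl(1+\tfrac{\alpha_t'}{\alpha_t\beta_t}-\tfrac{\beta_t'}{\beta_t^2}\Bigr).
\]
Thus $J'(t)=I'(t)\bigl(1+h(t)\bigr)$ where $h(t):=\frac{\alpha_t'}{\alpha_t\beta_t}-\frac{\beta_t'}{\beta_t^2}\to 0$ by the first assumption. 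Hence there is $t_0\ge t_1$ with $|h(t)|\le 1/2$ for $t\ge t_0$, so $J'>0$ on $[t_0,\infty)$ and $I'(t)/J'(t)=1/(1+h(t))\to 1$.

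Second, I would integrate this derivative comparison so as to stay self-contained. Given $\varepsilon\in(0,1)$, pick $T\ge t_0$ so that $(1-\varepsilon)J'(s)\le I'(s)\le(1+\varepsilon)J'(s)$ for all $s\ge T$ (legitimate since $J'>0$ there). Integrating over $[T,t]$ yields
\[
(1-\varepsilon)\bigl(J(t)-J(T)\bigr)\le I(t)-I(T)\le(1+\varepsilon)\bigl(J(t)-J(T)\bigr).
\]
Dividing by $J(t)$ and letting $t\to\infty$, the second hypothesis gives $J(t)\to+\infty$, so $J(T)/J(t)\to 0$ and $I(T)/J(t)\to 0$, whence $1-\varepsilon\le\liminf_{t\to\infty}I(t)/J(t)\le\limsup_{t\to\infty}I(t)/J(t)\le 1+\varepsilon$. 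Letting $\varepsilon\downarrow 0$ gives $I(t)/J(t)\to 1$, i.e.\ the asserted asymptotic equivalence.

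The only genuine subtlety is the eventual strict monotonicity of $J$: without knowing that $J'$ keeps a constant (positive) sign for large $t$ one cannot run the integral comparison, and this is precisely what the first hypothesis provides through the factorization $J'=I'(1+h)$ with $h\to 0$ and $I'>0$. The second hypothesis is used only to guarantee that the normalizing denominator $J(t)$ genuinely diverges, so that the boundary terms at $T$ wash out; nothing beyond these two observations and an elementary integration is required.
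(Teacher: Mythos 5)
Your proof is correct and is essentially the paper's argument in differential form: the paper integrates $\int_0^t \frac{\alpha_s}{\beta_s}\,\mathrm{d}\bigl(e^{\int_0^s\beta_u\mathrm{d}u}\bigr)$ by parts, which yields precisely the integrated version of your identity $J'(t)=I'(t)\bigl(1+h(t)\bigr)$, and then lets the two hypotheses absorb the correction term and the boundary terms. Your write-up merely makes explicit the $\varepsilon$-comparison and the use of $J(t)\to+\infty$ that the paper leaves implicit, so there is no substantive difference in method.
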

\begin{proof}
Integrating by parts, we have $$\int_{0}^t \alpha_s e^{\int_0^s \beta_u\mathrm{d}u}\mathrm{d}s =  \frac{\alpha_t}{\beta_t}e^{\int_0^t\beta_s\mathrm{d}s} - \frac{\alpha_0}{\beta_0} - \int_0^t \left( \frac{\alpha'_s}{\alpha_s\beta_s} - \frac{\beta'_s}{\beta_s^2}\right) \alpha_s e^{\int_0^s\beta_u\mathrm{d}u}\mathrm{d}s.\qedhere$$
\end{proof}

\begin{theo}
Suppose that $\varepsilon^2(t) = k/\log(t)$, with $\infty>k>2 \osc(\chi)$. Then, for all initial $t_0, x_0$, the free energy
$H\left(p(t_0,x_0,t,\cdot)|\Pi_{t,\varepsilon(t)}\right)$
converges to 0 (as $t\rightarrow \infty$).
\end{theo}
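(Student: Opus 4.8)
The plan is to combine the differential inequality of Proposition~\ref{ineq} with the Gronwall-type Lemma~\ref{gronwall}. Put $H(t):=H\big(p(t_0,x_0,t,\cdot)\,|\,\Pi_{t,\varepsilon(t)}\big)$; this is a nonnegative and continuous function of $t$ (finiteness being guaranteed by the existence of the transition density via Girsanov), so the lemma applies once the right-hand side of Proposition~\ref{ineq} is rewritten as $\alpha_t-\beta_t H(t)$, where $\beta_t:=2\varepsilon(t)^2/C(t)$ is the coercive coefficient coming from the logarithmic Sobolev inequality and $\alpha_t$ is an upper bound for the absolute value of the two remaining ``source'' terms (those involving $\dot\varepsilon$ and $\dot V_t$). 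It then suffices to check the two hypotheses of Lemma~\ref{gronwall}: that $\int^\infty\beta_t\,\mathrm{d}t=\infty$ and that $\alpha_t/\beta_t\to0$.

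The first hypothesis is immediate from the choice of schedule. By Lemma~\ref{logSob}, $C(t)=\tfrac2c\,e^{2\osc(\chi)/\varepsilon(t)^2}$, and with $\varepsilon^2(t)=k/\log t$ this equals $\tfrac2c\,t^{2\osc(\chi)/k}$; hence $\beta_t=\dfrac{ck}{t^{2\osc(\chi)/k}\log t}$. The assumption $k>2\osc(\chi)$ makes the exponent $2\osc(\chi)/k$ strictly less than $1$, which is exactly what forces $\int^\infty\beta_t\,\mathrm{d}t=\infty$.

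The heart of the matter is the second hypothesis. I would bound $\alpha_t$ using the explicit form $V_t(x)=V(x)+|x|^2/a(t)$ with $a(t)=(r+G^{-1}(t))\varepsilon^{-2}(t)$ and the identity $\tfrac{\mathrm{d}}{\mathrm{d}t}G^{-1}(t)=\varepsilon^2(t)$: a short computation gives $4|\dot\varepsilon(t)|\varepsilon(t)^{-3}=2/(kt)$ and $\dot V_t(x)=-\dot a(t)\,a(t)^{-2}|x|^2$ with $\dot a(t)=1+(r+G^{-1}(t))/(kt)$, so the two source terms are controlled by the second moments $\mathbb E|Z_t|^2$, $\langle|x|^2\rangle_{\Pi_{t,\varepsilon(t)}}$ and by $\mathbb E V(Z_t)$, $\langle V\rangle_{\Pi_{t,\varepsilon(t)}}$. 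These are all bounded uniformly in $t$: $\mathbb E V(Z_t)$ by Corollary~\ref{tension}, hence $\mathbb E|Z_t|^2$ too thanks to the quadratic lower bound $V(x)\ge\tfrac c4|x|^2-\mathrm{const}$ inherited from the strict convexity of $W$ and the boundedness of $\chi$; and $\langle|x|^2\rangle_{\Pi_{t,\varepsilon(t)}}$, $\langle V\rangle_{\Pi_{t,\varepsilon(t)}}$ by the Laplace-type estimates used in Proposition~\ref{convrecuit} and Lemma~\ref{majoration} (splitting into a compact part, on which the integrand is bounded, and an exponentially small tail). Since $a(t)\to\infty$ these bounds pass to $V_t$ and $\dot V_t$, yielding $\alpha_t\le \dfrac{C_1}{t}+C_2\Big(\dfrac{1}{tG^{-1}(t)\log t}+\dfrac{1}{(G^{-1}(t))^2\log t}\Big)$. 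Finally, because the schedule corresponds to $g$ growing at most logarithmically, one has $G(u)\le Cu\log u$ for large $u$ and therefore $G^{-1}(t)\ge c' t/\log t$, so the bracketed terms are $o(1/t)$ and $\alpha_t=O(1/t)$. Hence $\alpha_t/\beta_t=O\big(t^{\,2\osc(\chi)/k-1}\log t\big)\to0$, again because $2\osc(\chi)/k<1$. Applying Lemma~\ref{gronwall} to $f=H$ gives $H(t)\to0$, and Corollary~\ref{gronwall2} would even upgrade this to a convergence rate.

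The main obstacle is precisely this last step: the source terms are genuinely nonzero because both the temperature $\varepsilon(t)$ and the quadratic confinement $|x|^2/a(t)$ (itself a trace of the $Y_t\,\mathrm{d}t/(r+t)$ drift) vary in time, and one must show their combined contribution is $O(1/t)$ while the coercivity $\beta_t$ decays only polynomially, at the rate dictated by the blow-up $C(t)\asymp t^{2\osc(\chi)/k}$ of the logarithmic Sobolev constant. The hypothesis $k>2\osc(\chi)$ is exactly the threshold that keeps $\int\beta_t=\infty$ together with $\alpha_t/\beta_t\to0$; everything else is bookkeeping inside Lemma~\ref{gronwall}.
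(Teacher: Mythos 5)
Your proposal is correct and follows essentially the same route as the paper: combine the differential inequality of Proposition \ref{ineq} with Lemma \ref{gronwall}, using $\beta_t=2\varepsilon(t)^2/C(t)\asymp t^{-2\osc(\chi)/k}/\log t$ (divergent integral since $k>2\osc(\chi)$) and showing the source terms are $O(1/t)$ via the bounds on $\mathbb{E}V(Z_t)$, $\langle |x|^2\rangle_{\Pi_{t,\varepsilon(t)}}$ and the estimate $G^{-1}(t)\gtrsim t/\log t$. The only (harmless) difference is that you bound the absolute values of all four source terms, which forces you to also control $\mathbb{E}|Z_t|^2$ and $\langle V_t\rangle_{\Pi_{t,\varepsilon(t)}}$ --- both correctly justified --- whereas the paper discards two of them by a sign argument ($\mathbb{E}\dot V_t(Z_t)\le 0$ and $-\langle V_t\rangle_{\Pi_{t,\varepsilon(t)}}\le 0$).
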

\begin{proof}
Let $t_0\geq 0$ and $x_0\in \mathbb{R}^d$. Consider the process $Z_t$, solution to the SDE
$$\mathrm{d}Z_t = \varepsilon(t) \mathrm{d}W_t -
\left(\nabla V(Z_t) +\frac{Z_t}{a(t)} \right) \mathrm{d}t, \,
Z_{t_0} = x_0.$$ The result of Proposition \ref{ineq} can be rewritten in the following way:
\begin{eqnarray*}
\frac{\mathrm{d}}{\mathrm{d}t} H(p_t|\Pi_{t,\varepsilon(t)}) &\leq
& -\frac{2}{C(t)} \varepsilon(t)^2 H(p_t|\Pi_{t,\varepsilon(t)}) +
\frac{2}{\varepsilon(t)^2} (\mathbb{E}\dot{V}_t(Z_t) -
<\dot{V}_t>_{\Pi_{t,\varepsilon(t)}})\\
&-& 4 \dot{\varepsilon}(t) \varepsilon(t)^{-3} \left(\mathbb{E}
V_t(Z_t) - <V_t>_{\Pi_{t,\varepsilon(t)}} \right).
\end{eqnarray*}
As $V(x) \geq c|x|^2$ out of a compact set and $Z$ is tight (by Corollary \ref{tension}), we have $\mathbb{E}V_t(Z_t) = O(1)$. Because $t\mapsto a(t)$ is nondecreasing while $t\mapsto \varepsilon(t)$ is nonincreasing, and as $\dot{V}_t (x) = -\frac{\dot{a}(t)}{a(t)^2} |x|^2$, the two terms $\mathbb{E}(\dot{V}_t(Z_t))$ and $<V_t>_{\Pi_{t,\varepsilon(t)}}$ are nonpositive. So, it only remains to find a upper bound for $<\dot{V}_t>_{\Pi_{t,\varepsilon(t)}}$. Indeed, by Lemma \ref{majoration}, there exist $M_1,M_2>0$ such that
$$\frac{\mathrm{d}}{\mathrm{d}t} H(p_t|\Pi_{t,\varepsilon(t)}) \leq - \frac{2}{C(t)} \varepsilon(t)^2 H(p_t|\Pi_{t,\varepsilon(t)}) + M_1 \frac{\dot{\varepsilon}(t)}{\varepsilon(t)^3} + M_2 \frac{\dot{a}(t)}{\varepsilon(t)^2 a(t)^2}.$$ 
We easily compute the time-derivative of $a(t)$:
\begin{eqnarray*}
\frac{\dot{a}(t)}{a(t)^2 \varepsilon^4(t)} &=& - \frac{2\dot{\varepsilon}(t) }{\varepsilon^3(t)(r+ G^{-1}(t))} +
\frac{1}{(g\circ G^{-1}(t))(r+G^{-1}(t))^2\varepsilon^2(t)}\\
&=& \frac{1}{kt(r+ G^{-1}(t))} +\frac{\log t}{k(g\circ G^{-1}(t))(r+G^{-1}(t))^2}.
\end{eqnarray*}
As $G^{-1}(t)$ is a nondecreasing function and because of the hypothesis on $k$, the term $\frac{C(t)}{kt(r+ G^{-1}(t))}$  converges to 0 when $t$ goes to the infinity. For the second term, we recall that $\log G(t) /g(t)$ is bounded by assumption: there exist two positive constants $m, M$ such that $mg(t) \leq \log G(t) \leq Mg(t)$. So, we get $mg(t) \leq \log (tg(t)) = \log t + \log g(t)$, what naturally implies that $g(t) = O(\log t)$ and then $G(t)\leq tg(t) = o(t^2)$. So, $$G(t)^{2\osc(\chi)/k} \log G(t) /(kg(t) (r+t)^2) \underset{t\rightarrow \infty}{\longrightarrow} 0.$$ Lemma \ref{gronwall} asserts that if $\varepsilon$ satisfies $$\int^\infty_1 \varepsilon(t)^2 \frac{\mathrm{d}t}{C(t)} = \infty \, \, \text{and} \, \, \frac{\dot{\varepsilon}(t)}{\varepsilon^5(t)} \underset{t\rightarrow \infty}{\longrightarrow} 0,$$
then $\underset{t\rightarrow \infty}{\lim} H(p_t|\Pi_{t,\varepsilon(t)}) =0$. We meet the required conditions with $\varepsilon^2(t) = k/\log t$.
\end{proof}
\begin{rk}
The constant $k$ is not optimal here, because we have used the perturbation lemma of Holley and Stroock for the estimation of the logarithmic Sobolev constant.
\end{rk}

\begin{lemma}
The speed of convergence of $H(p(t_0,x_0,t,\cdot)|\Pi_{t,\varepsilon(t)})$ toward 0 is $\frac{(\log t)^{3}}{t^{2 -2\osc(\chi) /k}}$.
\end{lemma}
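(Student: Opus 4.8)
The plan is to extract a quantitative rate from the differential inequality already established in the proof of the preceding theorem, namely
\[
\frac{\mathrm{d}}{\mathrm{d}t} H(p_t|\Pi_{t,\varepsilon(t)}) \leq - \frac{2}{C(t)} \varepsilon(t)^2 H(p_t|\Pi_{t,\varepsilon(t)}) + M_1 \frac{\dot{\varepsilon}(t)}{\varepsilon(t)^3} + M_2 \frac{\dot{a}(t)}{\varepsilon(t)^2 a(t)^2},
\]
and to read off the asymptotic size of each of the three ingredients under the choice $\varepsilon^2(t) = k/\log t$. First I would identify the damping rate: since $C(t) = 2e^{\frac{2}{\varepsilon(t)^2}\osc(\chi)}/c = \frac{2}{c} t^{2\osc(\chi)/k}$ and $\varepsilon(t)^2 = k/\log t$, the coefficient $\beta_t := 2\varepsilon(t)^2/C(t)$ behaves like a constant multiple of $1/(t^{2\osc(\chi)/k}\log t)$, so that $\int_0^t \beta_s\,\mathrm{d}s$ grows like $t^{1-2\osc(\chi)/k}/\log t$ (up to constants). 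Next I would estimate the forcing term $\alpha_t := M_1 \dot{\varepsilon}(t)/\varepsilon(t)^3 + M_2 \dot{a}(t)/(\varepsilon(t)^2 a(t)^2)$. Here $\dot{\varepsilon}(t)/\varepsilon(t)^3 = O(1/(t\log t))$, and using the computation of $\dot a(t)/(a(t)^2\varepsilon^4(t))$ from the previous proof together with $g(t)=O(\log t)$ and $G^{-1}(t)\sim$ (a constant times) $t/\log t$, the term $\dot{a}(t)/(\varepsilon(t)^2 a(t)^2)$ is also of order $1/(t(\log t)^{?})$; the dominant contribution gives $\alpha_t = O(1/(t\log t))$ or thereabouts. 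The candidate rate $(\log t)^3/t^{2-2\osc(\chi)/k}$ should then emerge as essentially $\alpha_t/\beta_t$.

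Concretely, I would invoke Corollary \ref{gronwall2}: it gives, under its two hypotheses, the asymptotic equivalence $\int_0^t \alpha_s e^{\int_0^s\beta_u\,\mathrm{d}u}\,\mathrm{d}s \sim \frac{\alpha_t}{\beta_t} e^{\int_0^t\beta_s\,\mathrm{d}s}$. Combined with the Gronwall bound from the proof of Lemma \ref{gronwall},
\[
f(t) \leq f(0)e^{-\int_0^t\beta_u\,\mathrm{d}u} + e^{-\int_0^t\beta_u\,\mathrm{d}u}\int_0^t \alpha_s e^{\int_0^s\beta_u\,\mathrm{d}u}\,\mathrm{d}s,
\]
this yields $H(p_t|\Pi_{t,\varepsilon(t)}) = O(\alpha_t/\beta_t)$, since the homogeneous term $f(0)e^{-\int_0^t\beta_u\,\mathrm{d}u}$ decays faster (exponentially in $t^{1-2\osc(\chi)/k}/\log t$) than any power of $1/t$. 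It then remains to plug in the explicit expressions: $\alpha_t/\beta_t = \alpha_t C(t)/(2\varepsilon(t)^2) \asymp \frac{1}{t\log t}\cdot t^{2\osc(\chi)/k}\cdot \log t = t^{2\osc(\chi)/k - 1}$ times lower-order logarithmic factors, and one tracks the powers of $\log t$ carefully to land on $(\log t)^3/t^{2-2\osc(\chi)/k}$. I would verify the two hypotheses of Corollary \ref{gronwall2}, namely $\frac{\alpha'_t}{\alpha_t\beta_t} - \frac{\beta'_t}{\beta_t^2} \to 0$ and $\frac{\alpha_t}{\beta_t}e^{\int_0^t\beta_s\,\mathrm{d}s} \to \infty$; the second holds because the exponential dominates, and the first reduces to checking that the logarithmic derivatives of $\alpha_t$ and $\beta_t$ are $o(\beta_t)$, which is immediate since $\alpha'_t/\alpha_t$ and $\beta'_t/\beta_t$ are $O(1/t)$ while $\beta_t^{-1}$ is a negative power of $t$ times logs.

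The main obstacle I anticipate is bookkeeping rather than conceptual: pinning down the exact power of $\log t$ in $\alpha_t$, which requires the precise asymptotics of $G^{-1}(t)$ and of $g\circ G^{-1}(t)$ under the standing assumption $\varepsilon^2(t) = k/\log t$ (equivalently $g\circ G^{-1}(t) \sim \frac1k \log t$, hence $G^{-1}(t)$ and $G$ must be inverted asymptotically). One must be careful that the two pieces of $\alpha_t$, coming from $\dot\varepsilon/\varepsilon^3$ and from $\dot a/(\varepsilon^2 a^2)$, may contribute different logarithmic powers, and the slower-decaying one dictates the final exponent of $\log t$. A secondary point is to confirm that the exponentially small homogeneous term and the negative (helpful) terms $\mathbb{E}\dot V_t(Z_t)$ and $<V_t>_{\Pi_{t,\varepsilon(t)}}$ do not affect the rate — they only improve the bound — so the upper bound $(\log t)^3/t^{2-2\osc(\chi)/k}$ is genuinely governed by $<\dot V_t>_{\Pi_{t,\varepsilon(t)}}$ via Lemma \ref{majoration}. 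Once the exponents are matched, the statement follows directly from Corollary \ref{gronwall2} and the Gronwall estimate, with no further input needed.
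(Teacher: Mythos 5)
Your overall strategy is the one the paper follows: start from the differential inequality $\frac{\mathrm{d}}{\mathrm{d}t}H \le -\beta_t H + \alpha_t$ with $\beta_t = 2\varepsilon(t)^2/C(t) \asymp t^{-2\osc(\chi)/k}/\log t$, apply the Gronwall bound from Lemma \ref{gronwall} together with Corollary \ref{gronwall2}, and read the rate off as $\alpha_t/\beta_t$. The gap is in your estimate of $\alpha_t$, and it is not a matter of logarithmic bookkeeping: your own displayed computation gives $\alpha_t/\beta_t \asymp t^{2\osc(\chi)/k-1}$, while the target is $(\log t)^3\, t^{2\osc(\chi)/k-2}$; these differ by a factor $t/(\log t)^3$, which no amount of ``tracking powers of $\log t$'' can absorb. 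Concretely, $\dot\varepsilon(t)/\varepsilon(t)^3 = -1/(2kt)$ is exactly of order $1/t$ (not $1/(t\log t)$), and if its absolute value is kept in $\alpha_t$ the method can only deliver $t^{2\osc(\chi)/k-1}$ up to logs. The paper obtains the faster rate precisely because it takes $\alpha_t = (r+G^{-1}(t))^{-2} + \bigl(t(r+G^{-1}(t))\bigr)^{-1}$, i.e.\ it retains only the forcing coming from $\dot a(t)/(\varepsilon(t)^2 a(t)^2)$ and discards the $\dot\varepsilon/\varepsilon^3$ contribution (nonpositive as written, since $\dot\varepsilon<0$). You gesture at this at the very end (``the negative terms only improve the bound\dots the rate is governed by $\langle\dot V_t\rangle$''), but that observation is the crux of the proof, not a secondary point, and it contradicts the quantitative computation earlier in your argument: as written, the two parts of your proposal do not produce the same exponent of $t$.

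The second ingredient you would still need is the correct size of the surviving term. From the expression for $\dot a/(a^2\varepsilon^4)$ in the proof of the preceding theorem, $\dot a(t)/(\varepsilon(t)^2a(t)^2)$ is of order $(r+G^{-1}(t))^{-2}$ up to logarithms, not $1/(t(\log t)^{?})$ as you write; since $g\circ G^{-1}(t)\asymp\log t$ forces $G^{-1}(t)\asymp t/\log t$, this is $\asymp (\log t)^2/t^2$, and dividing by $\beta_t$ produces exactly $(\log t)^3\, t^{2\osc(\chi)/k-2}$. Your verification of the hypotheses of Corollary \ref{gronwall2} is essentially right, though note that $\beta_t^{-1} = t^{2\osc(\chi)/k}\log t$ is a \emph{positive} power of $t$, so the condition $\frac{\alpha'_t}{\alpha_t\beta_t}-\frac{\beta'_t}{\beta_t^2}\to 0$ genuinely uses $k>2\osc(\chi)$ rather than being automatic.
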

\begin{proof}
As we satisfy the hypothesis of Corollary \ref{gronwall2}, the speed of convergence is $\frac{\alpha_t}{\beta_t}$, with $\alpha_t = (r+G^{-1}(t))^{-2} + (t(r+G^{-1}(t)))^{-1}$ and $\beta_t = t^{-2 \osc(\chi) /k} /\log t$. So, we get $$\frac{\alpha_t}{\beta_t} = \frac{t^{2 \osc(\chi) /k}\log t}{(r+G^{-1}(t))^2} + \frac{\log t}{t^{1-2 \osc(\chi)/k} (r+G^{-1}(t))}.$$
As, up to a multiplicative constant, $g\circ G^{-1}(t) = \log t$, we get that $g(t)=O(\log t)$ and $G^{-1}(t)$ is of the order of $t/ \log t$. So $\alpha_t /\beta_t$ is asymptotically of the order of $(\log t)^{3}t^{2 \osc(\chi) /k-2}$.
\end{proof}
\begin{rk}
It is known since the work of Freidlin and Wentzell \cite{FW} (see \cite{D}, Chap.5), that the Gibbs measure
$\Pi_{t,\varepsilon(t)}$ satisfies a large deviation principle. Therefore, the speed of convergence of $\Pi_{t,\varepsilon(t)}$ toward $\Pi_0$ is $e^{-\log t /2k}$.
\end{rk}

\begin{coro} 
Suppose that $\varepsilon^2(t) = k/\log(t)$, with $\infty>k > \max\{2\osc(\chi),d/4\}$. Then $Z$ and $Y$ converge in distribution to a random variable, which law is $\Pi_0$ \eqref{eq:pi0}.
\end{coro}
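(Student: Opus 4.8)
The plan is to combine the two main ingredients established in the preceding lemmas: the convergence to zero of the free energy $H(p(t_0,x_0,t,\cdot)\,|\,\Pi_{t,\varepsilon(t)})$ (valid under the hypothesis $k>2\osc(\chi)$), and the weak convergence of the ``instantaneous invariant measure'' $\Pi_{t,\varepsilon(t)}$ to $\Pi_0$ (valid under $k>d/4$, by Proposition \ref{convrecuit}). Taking $k>\max\{2\osc(\chi),d/4\}$ lets us use both at once. First I would recall the Csisz\'ar--Kullback--Pinsker inequality: for probability measures $P,\Pi$ with $P\ll\Pi$, one has $\|P-\Pi\|_{TV}^2 \le 2 H(P|\Pi)$. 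Applying this with $P = p(t_0,x_0,t,\cdot)\,\lambda$ (the law of $Z_t$) and $\Pi = \Pi_{t,\varepsilon(t)}$, the free-energy convergence gives $\|\mathrm{Law}(Z_t) - \Pi_{t,\varepsilon(t)}\|_{TV}\to 0$ as $t\to\infty$.

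Next I would pass from this to weak convergence of $\mathrm{Law}(Z_t)$. For any bounded continuous $f$, write
\begin{eqnarray*}
\left|\mathbb{E}f(Z_t) - \langle f\rangle_{\Pi_0}\right| \le \left|\mathbb{E}f(Z_t) - \langle f\rangle_{\Pi_{t,\varepsilon(t)}}\right| + \left|\langle f\rangle_{\Pi_{t,\varepsilon(t)}} - \langle f\rangle_{\Pi_0}\right|.
\end{eqnarray*}
The first term is bounded by $\|f\|_\infty \|\mathrm{Law}(Z_t)-\Pi_{t,\varepsilon(t)}\|_{TV}\to 0$ by the previous step; the second term tends to $0$ by Proposition \ref{convrecuit} (weak convergence $\Pi_{t,\varepsilon(t)}\Rightarrow\Pi_0$). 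Hence $Z_t$ converges in distribution to a random variable of law $\Pi_0$. Since $Z_t = Y_{G^{-1}(t)}$ and $G^{-1}$ is a continuous increasing bijection of $\mathbb{R}_+$ onto $\mathbb{R}_+$ (because $g>0$ and, under our hypotheses, $G^{-1}(t)\to\infty$), the convergence in distribution of $Z_t$ as $t\to\infty$ is equivalent to that of $Y_s$ as $s\to\infty$; so $Y_s\Rightarrow \Pi_0$ as well. One should also note the independence of the limit from the initial data $t_0,x_0$, which is already visible in the free-energy estimate and in the form of $\Pi_0$.

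The step I expect to require the most care is the justification that the free-energy bound is legitimately applicable, i.e. that $\mathrm{Law}(Z_t)$ is indeed absolutely continuous with respect to $\Pi_{t,\varepsilon(t)}$ with the regularity needed for the CKP inequality and that the constant in the free-energy theorem does not degenerate; but this is essentially contained in the proof of Proposition \ref{ineq} (existence of the density $p_t$ via Girsanov) together with the preceding theorem, so it reduces to invoking those results. A minor additional point is to make sure the hypothesis of the corollary — phrased as $\varepsilon^2(t)=k/\log t$ — is consistent with the running assumption $g\circ G^{-1}(t)\sim k\log(1+t)$ used throughout the subsection, which is immediate since $\varepsilon^2(t)=1/(g\circ G^{-1}(t))$. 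With these observations the corollary follows by simply stringing together the CKP inequality, the free-energy theorem, and Proposition \ref{convrecuit}, plus the deterministic time-change identity $Z_t=Y_{G^{-1}(t)}$.
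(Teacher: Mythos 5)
Your proposal is correct and follows essentially the same route as the paper: the Csisz\'ar--Kullback--Pinsker inequality $\|p_t-\Pi_{t,\varepsilon(t)}\|_{TV}^2\le 2H(p_t|\Pi_{t,\varepsilon(t)})$ combined with the free-energy theorem and the weak convergence $\Pi_{t,\varepsilon(t)}\Rightarrow\Pi_0$ from Proposition \ref{convrecuit}. Your explicit triangle-inequality splitting and the remark on the time change $Z_t=Y_{G^{-1}(t)}$ are in fact slightly more careful than the paper's own one-line justification, but the argument is the same.
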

\begin{proof}
The Kullback information $H(p_t|\Pi_{t,\varepsilon(t)})$ estimates the distance between $p_t$ and $\Pi_{t,\varepsilon(t)}$ in the following way: $||p_t - \Pi_{t,\varepsilon(t)}||_{TV}^2 \leq 2 H(p_t|\Pi_{t,\varepsilon(t)})$, where $||\cdot||_{TV}$ denotes the total variation norm (see \cite{hs_rec}). The result follows because $\Pi_{t,\varepsilon(t)}$ converges weakly to $\Pi_{0}$ and the total-variation norm charaterizes the weak convergence of measures.
\end{proof}
%
\begin{rk}
We emphasize that if $\underset{t\rightarrow\infty}{\lim}g(t)^{-1} \log G(t) =k$, with $k$ not large enough, then the previous result is false and $Y$ does not converge toward the global minima of $V$ (except if each local minimum of $V$ is a global minimum). Indeed, in the simulated annealing theory, if $\varepsilon$ decreases too fast to zero, then the process $Y$ freezes in a local minimum, the choose of the minimum depending on the initial value $Y_0 = x-\overline{\mu}$.
\end{rk}

\section{The process $X$: convergence in distribution}
We give here sufficient conditions for the convergence of $X$. As usual, we begin to work with the process
$Y_t = X_t - \overline{\mu}_t$. In order to link this section with the preceding one, we recall that $\varepsilon(t)^2 = (g\circ G^{-1}(t))^{-1} = k/\log t$. So, we consider only functions $g$ such that (asymptotically) $\log G(t) =
kg(t)$. By Theorem \ref{thm:cvge}, there exist $a_i\geq 0$ such that $\sum a_i =1$ and for all continuous bounded $f$: $\frac{1}{t}\int_0^t f(Y_s) \mathrm{d}s \rightarrow \sum a_if(m_i)$. 
\begin{theo}
Suppose that $\infty> \underset{t\rightarrow \infty}{\lim} g(t)^{-1} \log G(t)= k>\max\{2\osc(\chi),d/4\}$. Then one of the
following holds:
\begin{enumerate}
    \item If $V$ is a function such that $\underset{1\leq i\leq n}{\sum} a_i m_i =0$, then $X_t$ converges in distribution to $Y_\infty + \int_0^\infty Y_s \frac{\mathrm{d}s}{r+s}$;
    \item Else, $X_t$ diverges.
\end{enumerate}
\end{theo}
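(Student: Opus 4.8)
The plan is to analyse everything through $\overline{\mu}_t$, since $X_t = Y_t + \overline{\mu}_t$, the family $(Y_t)$ is tight by Corollary \ref{tension}, and $Y_t$ converges in distribution to a variable $Y_\infty$ of law $\Pi_0$ by the last Corollary of Section \ref{s:asympY}; hence the behaviour of $X_t$ is governed entirely by $\overline{\mu}_t = \overline{\mu} + \int_0^t Y_s\,\frac{\mathrm{d}s}{r+s}$. Put $N_t := \int_0^t Y_s\,\mathrm{d}s$; integrating by parts gives $\int_0^t Y_s\,\frac{\mathrm{d}s}{r+s} = \frac{N_t}{r+t} + \int_0^t \frac{N_s}{(r+s)^2}\,\mathrm{d}s$, while Theorem \ref{thm:cvge} yields $N_t/t \to \sum_i a_i m_i =: \bar m$ almost surely. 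Since $\bar m$ is zero or not, the two cases of the statement are exhaustive.

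If $\bar m \neq 0$, then $\frac{N_t}{r+t}\to\bar m$ and $\int_0^t\frac{N_s}{(r+s)^2}\,\mathrm{d}s\sim\bar m\log t$, so $|\overline{\mu}_t|\to\infty$ almost surely; in particular $(\overline{\mu}_t)_t$ is not tight, hence neither is $(X_t)_t=(Y_t+\overline{\mu}_t)_t$, so $X$ cannot converge in distribution — this is case (2). Assume now $\bar m = 0$. I would first note that combining the a.s. convergence of Theorem \ref{thm:cvge} with the weak convergence $\mathbb{E}f(Y_s)\to\int f\,\mathrm{d}\Pi_0$ (which follows from $H(p(t_0,x_0,s,\cdot)|\Pi_{s,\varepsilon(s)})\to0$ and Proposition \ref{convrecuit}), through a Cesàro average, forces $\Pi_0 = \sum_i a_i\delta_{m_i}$; thus the hypothesis $\bar m = 0$ is exactly $\int x\,\Pi_0(\mathrm{d}x)=0$, i.e. $\mathbb{E}Y_\infty=0$. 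It then remains to prove that $\overline{\mu}_t$ converges in probability to a finite $\overline{\mu}_\infty$, which we write $\overline{\mu}+\int_0^\infty Y_s\,\frac{\mathrm{d}s}{r+s}$, and that $(\overline{\mu}_t,Y_t)$ converges in law to $(\overline{\mu}_\infty,Y_\infty)$ with $Y_\infty$ independent of $\overline{\mu}_\infty$; then $X_t=\overline{\mu}_t+Y_t$ converges in distribution to $\overline{\mu}_\infty+Y_\infty$, which is the limit of the statement.

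To obtain the convergence of $\overline{\mu}_t$ I would show that $\int_u^t Y_s\,\frac{\mathrm{d}s}{r+s}$ is Cauchy in $L^2$ as $u,t\to\infty$. For the mean, the polynomial rates proved earlier — the speed of $H(p_s|\Pi_{s,\varepsilon(s)})\to0$ and of $\Pi_{s,\varepsilon(s)}\to\Pi_0$ — transferred to the unbounded function $x$ via the uniform bounds $\mathbb{E}V(Y_s)=O(1)$ (Corollary \ref{tension}) and $\langle|x|^2\rangle_{\Pi_{s,\varepsilon(s)}}=O(1)$ (Lemma \ref{majoration}), give $|\mathbb{E}Y_s| = O(s^{-\rho})$ for some $\rho>0$, so $\int_u^t\frac{|\mathbb{E}Y_s|}{r+s}\,\mathrm{d}s = O(u^{-\rho})$. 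For the variance $\int_u^t\!\int_u^t\frac{\mathrm{Cov}(Y_s,Y_{s'})}{(r+s)(r+s')}\,\mathrm{d}s\,\mathrm{d}s'$ the hypothesis $k>2\osc(\chi)$ is crucial: by Lemmas \ref{logSob} and \ref{spectralgap}, in the clock $v=G(s)$ the \emph{frozen} dynamics at clock-time $v$ has spectral gap $\ge 1/C(v)$ with $C(v)=\tfrac{2}{c}\,v^{2\osc(\chi)/k}$, so the physical mixing time of $Y$ near time $s$ is of order $s^{2\osc(\chi)/k}$ up to logarithmic factors, which is $o(s)$ exactly because $k>2\osc(\chi)$. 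Splitting the double integral at the diagonal, the near-diagonal band $\{|s-s'|\lesssim s^{2\osc(\chi)/k}\}$ is estimated by $|\mathrm{Cov}|\le\mathbb{E}|Y_s|^2<\infty$ and contributes $O\big(\int_u^\infty s^{2\osc(\chi)/k-2}\,\mathrm{d}s\big)=o(1)$, while for $s<s'$ far apart one writes $\mathrm{Cov}(Y_s,Y_{s'}) = \mathbb{E}\big[(Y_s-\mathbb{E}Y_s)(\mathbb{E}[Y_{s'}|\mathcal{F}_s]-\mathbb{E}Y_{s'})\big]$ and bounds the second factor using the spectral gap propagated along the non-homogeneous semigroup (plus the drift of the moving Gibbs measure), which again yields a vanishing contribution. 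Hence $\overline{\mu}_t$ converges in probability to a finite $\overline{\mu}_\infty$.

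Finally, for the asymptotic independence: for fixed $T$ one has $\sup_{t\ge T}\|\overline{\mu}_t-\overline{\mu}_T\|_{L^1}\to0$ as $T\to\infty$ by the same estimates, so for bounded Lipschitz $f$ and bounded continuous $h$, $\mathbb{E}[f(\overline{\mu}_t)h(Y_t)] = \mathbb{E}\big[f(\overline{\mu}_T)\,\mathbb{E}[h(Y_t)|\mathcal{F}_T]\big] + R(T)$ with $\sup_t|R(T)|\to0$; since the free-energy theorem applies from every starting point, $\mathbb{E}[h(Y_t)|\mathcal{F}_T] = (P^{T,t}h)(Y_T)\to\int h\,\mathrm{d}\Pi_0$ almost surely as $t\to\infty$, and letting then $T\to\infty$ (using $\overline{\mu}_T\to\overline{\mu}_\infty$ in probability) gives $\mathbb{E}[f(\overline{\mu}_t)h(Y_t)]\to\mathbb{E}[f(\overline{\mu}_\infty)]\int h\,\mathrm{d}\Pi_0$, that is $(\overline{\mu}_t,Y_t)\Rightarrow(\overline{\mu}_\infty,Y_\infty)$ with $Y_\infty\perp\overline{\mu}_\infty$; adding the coordinates proves (1). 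I expect the variance estimate above to be the main obstacle: the log-Sobolev and spectral-gap inequalities are at hand only for the \emph{frozen} measures $\Pi_{t,\varepsilon(t)}$, so one must upgrade them into an honest decay-of-correlations statement for the \emph{non-homogeneous} diffusion $Y$ (controlling the drift of its moving equilibrium) and then check that the resulting polynomial mixing rate $s^{2\osc(\chi)/k}$ really beats the logarithmic divergence of $\int^t\frac{\mathrm{d}s}{r+s}$ — the balance between these two rates being precisely what fixes the threshold on $k$.
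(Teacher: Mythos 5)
Your overall architecture (reduce everything to $\overline{\mu}_t=\overline{\mu}+\int_0^t Y_s\,\frac{\mathrm{d}s}{r+s}$, dispose of the case $\sum_i a_i m_i\neq 0$ by non-tightness of $\overline{\mu}_t$, and in the other case combine a limit for $\overline{\mu}_t$ with the distributional limit of $Y_t$) is the same as the paper's, and your treatment of the recombination step is in fact more careful than the paper's: the paper invokes Slutsky with $U_t=\overline{\mu}_t$ and $V_t=X_t$ even though $|X_t-\overline{\mu}_t|=|Y_t|$ does not tend to $0$ in probability, whereas you correctly see that one needs joint convergence of $(\overline{\mu}_t,Y_t)$ and sketch an asymptotic-independence argument for it. The decisive step, however --- convergence of $\int_0^t Y_s\,\frac{\mathrm{d}s}{r+s}$ --- is attacked by a genuinely different method. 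The paper argues pathwise: it uses the almost sure ergodic theorem (Theorem \ref{thm:cvge}) together with Bena\"im--Schreiber's result that the normalized occupation measure of an asymptotic pseudotrajectory converges at the speed $G^{-1}(t+1)-G^{-1}(t)$ of the pseudotrajectory, and then integrates by parts $\int_0^t\frac{Y_s}{r+s}\,\mathrm{d}s=\frac{N_t}{r+t}+\int_0^t\frac{N_s}{(r+s)^2}\,\mathrm{d}s$ with $N_t=\int_0^tY_s\,\mathrm{d}s$. You instead propose an $L^2$-Cauchy argument, splitting into a mean term and a covariance term; this buys a cleaner probabilistic statement (convergence in probability of $\overline{\mu}_t$ plus joint convergence) at the price of needing quantitative mixing for the non-homogeneous diffusion.

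Two genuine gaps remain in your route. First, the covariance bound rests on a decay-of-correlations estimate for the \emph{non-homogeneous} process which is established nowhere: Lemmas \ref{logSob} and \ref{spectralgap} concern the frozen measures $\Pi_{t,\varepsilon(t)}$ and their frozen semigroups, and the free-energy theorem controls only one-time marginals in total variation; upgrading this to a bound on $\mathrm{Cov}(Y_s,Y_{s'})$ for the unbounded observable $x$, uniformly over the random (entropy-infinite) initial condition $Y_s$, is a substantial piece of work that you acknowledge but do not supply. Second, and more seriously, the claimed bound $|\mathbb{E}Y_s|=O(s^{-\rho})$ is unjustified and most likely false: even granting that $p_s$ approaches $\Pi_{s,\varepsilon(s)}$ at a polynomial rate in total variation, the quantity $\langle x\rangle_{\Pi_{s,\varepsilon(s)}}$ itself converges to $\langle x\rangle_{\Pi_0}=0$ only at the Laplace-correction rate $O(\varepsilon^2(s))=O(1/\log s)$, and $\int^\infty\frac{\mathrm{d}s}{(r+s)\log s}$ diverges; so the mean part of your Cauchy estimate does not close without exhibiting cancellations beyond an absolute bound on $|\mathbb{E}Y_s|$. (The paper's own pathwise route faces the same logarithmic obstruction, since $G^{-1}(t+1)-G^{-1}(t)\sim 1/\log t$ is likewise not integrable against $\mathrm{d}t/t$, but that does not repair your argument.)
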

\begin{proof}
We wonder whether $\int_0^t Y_s \frac{\mathrm{d}s}{r+s}$ converges in distribution or not. 
Remark, that $$X_t = Y_t + \int_0^t \frac{\mathrm{d}s}{r+s} Y_s = Y_t + \overline{\mu}_t.$$  Lemma \ref{tight} ensures that the duality for the weak convergence is true for functions bounded by $V$, so in particular for $f(x)=x$. Suppose that $V$ is such that $\underset{1\leq i\leq n}{\sum} a_i m_i =0$. Then, we know that $\frac{1}{t}\int_0^t Y_s \mathrm{d}s \xrightarrow {a.s.} 0$ and it remains to find the rate of convergence in order to conclude the proof. But, Bena\"im and Schreiber \cite{besch} (Theorem 1) have proved that, for an asymptotic pseudotrajectory (in probability) $Y$, the speed of convergence of the mean of the normalized occupation measure of $Y$ is the same as the speed of the pseudotrajectory. This means that the speed of convergence of the normalized occupation measure of the time-changed process $Y_{G^{-1}(t)}$ is $G^{-1}(1+t) - G^{-1}(t)$. Integrating by parts, we obtain 
\begin{eqnarray*}
\frac{1}{t} \int_0^t Y_s \mathrm{d}s 
=\frac{1}{t g(t)} \int_0^{G(t)} Y_{G^{-1}(u)} \mathrm{d}u + \frac{1}{t} \int_0^{G(t)} \mathrm{du}\frac{g'\circ
G^{-1}(u)}{(g\circ G^{-1}(u))^3} \int_0^u Y_{G^{-1}(s)} \mathrm{d}s.
\end{eqnarray*}
The first right-hand term converges to 0 because $G(t) \leq tg(t)$. It remains to prove the convergence of the second term. 
We have that (because, up to a multiplicative constant, $g\circ G^{-1}(u) = \log (2+u)$) 
\begin{eqnarray*}
\frac{1}{t} \int_0^{G(t)} \mathrm{du}\frac{g'\circ
G^{-1}(u)}{(g\circ G^{-1}(u))^3} s[G^{-1}(s+T) -G^{-1}(s)] \mathrm{d}s\\
\le \frac{G(t) (t-G^{-1}(G(t)+T)}{tg(t)} + \frac{1}{t} \int_0^t \frac{G^{-1}(s+T) -G^{-1}(s)}{g\circ G^{-1}(s)} \mathrm{d}s <\infty.
\end{eqnarray*}
So, if $V$ is a function such that $\underset{1\leq i\leq n}{\sum} a_i m_i \neq 0$, then $\int_0^t Y_s \mathrm{d}s$ does not converge. Suppose that $\underset{1\leq i\leq n}{\sum} a_i m_i = 0$. Let $U_t := \overline{\mu}_t$ and $V_t := X_t$. We conclude, because the celebrated Slutsky theorem asserts that for two sequences of $\mathbb{R}^d$-valued random variables $(U_t)$ and $(V_t)$,  with $U_t \underset{t\rightarrow \infty}{\xrightarrow {(d)}} U$ and
$|U_t-V_t| \underset{t\rightarrow \infty}{\xrightarrow {\mathbb{P}}} 0$, then $V_t \underset{t\rightarrow
\infty}{\xrightarrow {(d)}} U$.
\end{proof}
\begin{rk}
The condition on $g$ means that, asymptotically, $g(t) = k \log t$ for $k$ large enough.
\end{rk}

\section{The case $g(t)=1$}
We give the proof for $\R$ but it is easily reproduced in $\R^d$. We suppose now that $g(t)=1$. 
In order to study the behavior of the process $X$ solution of
$$ \mathrm{d}X_t = \mathrm{d}B_t-V'(X_t-\bar{\mu}_t) \mathrm{d}t,$$
we introduce the process $Y_t:=X_t-\bar{\mu}_t$, solution of the SDE
$$\mathrm{d}Y_t = \mathrm{d}B_t - V'(Y_t)\mathrm{d}t - \frac{Y_t}{r+t}\mathrm{d}t.$$ We also introduce the
Kolmogorov process $Z$ solution to $$\mathrm{d}Z_t = \mathrm{d}B_t -V'(Z_t) \mathrm{d}t.$$
This process is a positive recurrent diffusion. 
Denote by $\gamma$ its invariant probability measure, $\gamma(\mathrm{d}x) = \frac{e^{-2V(x)}}{\int_{\R} e^{-2V(y)}\mathrm{d}y} \mathrm{d}x$, and $\overline{\gamma}$ is the mean of $\gamma$. For all $h \in L^1(\gamma)$ we have, with an exponential speed of convergence, $$ \lim_{t\rightarrow\infty} \frac{1}{t} \int_0^t h(Z_s)\mathrm{d}s = \int_{\R} h\, \mathrm{d}\gamma \; \; a.s.$$
\begin{lemma}\label{lem:Yproba}
The process $Y$ converges in probability to a random variable
$Y_\infty$ of density $\gamma$ when $t$ goes to $+\infty$.
\end{lemma}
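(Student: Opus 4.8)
The plan is to realise the process $Y$ and the Kolmogorov diffusion $Z$ on the \emph{same} probability space, driving both by the same Brownian motion $B$ and starting them from the same point $Z_0 = Y_0 = x-\overline{\mu}$, and then to prove that the two trajectories merge in probability. Since $Z$ is positive recurrent with invariant density $\gamma$, its time-marginal law converges to $\gamma$; the random variable $Y_\infty$ of the statement is then the $\gamma$-distributed limit that $Y$ shadows, and the whole content of the lemma is the synchronous-coupling estimate $Y_t - Z_t \xrightarrow{\mathbb{P}} 0$. This is precisely what upgrades the conclusion from convergence of laws to convergence in probability toward a genuine, full-support random variable, in contrast with the degenerate Dirac limit of the logarithmic regime.

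Set $D_t := Y_t - Z_t$. The Brownian terms cancel, so $D$ solves the pathwise (finite-variation) equation
\begin{equation*}
\frac{\mathrm{d}}{\mathrm{d}t} D_t = -\bigl(V'(Y_t)-V'(Z_t)\bigr) - \frac{Y_t}{r+t}, \qquad D_0 = 0,
\end{equation*}
whence
\begin{equation*}
\frac{\mathrm{d}}{\mathrm{d}t} D_t^2 = -2\bigl(V'(Y_t)-V'(Z_t)\bigr)D_t - \frac{2 Y_t}{r+t}\, D_t .
\end{equation*}
I would then insert the decomposition $V = W+\chi$. The strong convexity $W''\ge c$ gives $(W'(Y_t)-W'(Z_t))D_t \ge c\,D_t^2$, while the Lipschitz bound on $\chi'$ gives $(\chi'(Y_t)-\chi'(Z_t))D_t \ge -\widetilde{C}\,D_t^2$; crucially, since $\chi$ is compactly supported, this last term vanishes unless $Y_t$ or $Z_t$ lies in $\mathrm{supp}(\chi)$. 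Therefore
\begin{equation*}
\frac{\mathrm{d}}{\mathrm{d}t} D_t^2 \le -2c\, D_t^2 + 2\widetilde{C}\, D_t^2\, \1_{\{Y_t\in\mathrm{supp}\chi\}\cup\{Z_t\in\mathrm{supp}\chi\}} + \frac{2|Y_t|}{r+t}\,|D_t| .
\end{equation*}

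Taking expectations and applying Gronwall's lemma, the proof reduces to dominating the two error terms. For the perturbation, a Lyapunov estimate as in Lemma \ref{tight} (now with $\varepsilon\equiv 1$) yields $\sup_t \mathbb{E}|Y_t|^2 < \infty$, so the forcing $\tfrac{1}{r+t}\,\mathbb{E}(|Y_t|\,|D_t|)$ tends to $0$ and is integrated away against the contraction rate $c>0$. The genuinely delicate point, and the main obstacle, is the indicator term: on the region where $\chi$ destroys convexity there is no pathwise contraction. I would control it using the one-dimensional structure, namely that $Y$ and $Z$ are confined, $\gamma$-ergodic diffusions, so the (asymptotic) fraction of time each spends in the compact set $\mathrm{supp}(\chi)$ is bounded, and $D_t$ stays under control on the excursions into that set because of the confinement outside it. Combining the effective contraction away from $\mathrm{supp}(\chi)$ with the bounded occupation of $\mathrm{supp}(\chi)$ should give $\mathbb{E}\,D_t^2 \to 0$, hence $D_t \to 0$ in probability. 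Finally, positive recurrence of $Z$ gives $\mathcal{L}(Z_t)\to\gamma$; together with $Y_t - Z_t \xrightarrow{\mathbb{P}} 0$ this realises $Y_t$ asymptotically as the $\gamma$-distributed diffusion $Z_t$ and yields the stated convergence of $Y$ toward a random variable $Y_\infty$ of density $\gamma$.
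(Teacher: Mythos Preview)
Your synchronous-coupling strategy is exactly the paper's. The difference lies in how you treat the $\chi$ contribution, and here you have made things harder than necessary. The paper does not isolate an indicator at all: it simply uses the global estimate
\[
-\bigl(V'(y)-V'(z)\bigr)(y-z)\;\le\;-(c-\widetilde{C})\,(y-z)^2,
\]
valid for every $y,z$ since $W'$ is $c$-strongly monotone and $\chi'$ is $\widetilde{C}$-Lipschitz, and it relies on the standing assumption $c>\widetilde{C}$ (made explicit a few lines later in Lemma~\ref{lemmeamontrer2}, where $C:=2(c-\widetilde{C})>0$). With this one obtains directly
\[
\frac{1}{2}\,\frac{\mathrm{d}}{\mathrm{d}t}\,\mathbb{E}(Y_t-Z_t)^2\;\le\;-(c-\widetilde{C})\,\mathbb{E}(Y_t-Z_t)^2+\frac{1}{r+t}\,\mathbb{E}Z_t^2,
\]
and Gronwall finishes. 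Your occupation-time idea is therefore not needed; and note that, as you sketched it, it would not close without some version of the same hypothesis anyway: during visits to $\mathrm{supp}(\chi)$ the quantity $D_t^2$ may grow at exponential rate $2(\widetilde{C}-c)$, and a bounded \emph{fraction} of time spent there does not by itself prevent net growth.

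Two smaller points where the paper is slightly slicker. First, it initialises $Z_0\sim\gamma$, so $Z$ is stationary and $\mathbb{E}Z_t^2$ is bounded for free; you take $Z_0=Y_0$ and invoke ergodicity of $Z$ at the end, which is fine but adds a step. Second, for the perturbation term the paper writes $-\tfrac{1}{r+t}\,Y_t(Y_t-Z_t)\le \tfrac{1}{2(r+t)}\,Z_t^2$ via the elementary inequality $-y(y-z)\le z^2/2$, so only the second moment of $Z$ (not of $Y$) enters the Gronwall bound.
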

\begin{proof}
Using that $-y(y-z) \leq \frac{z^2}{2}$ for all $y,z \in \R$, we get
\begin{eqnarray*}
\frac{1}{2} \frac{\mathrm{d}}{\mathrm{d}t} \mathbb{E}(Y_t-Z_t)^2 &=& - \mathbb{E}(V'(Y_t)-V'(Z_t),Y_t-Z_t) - \mathbb{E}\left(\frac{Y_t}{r+t},Y_t-Z_t \right)\\
&\le & -(c+\tilde{C})\mathbb{E}(Y_t-Z_t)^2 +\frac{1}{r+t}\mathbb{E}(Z_t^2).
\end{eqnarray*}
Applying It\^o's formula, it is easy to prove the existence of $M>0$ such that for all $t\ge 0$, $\mathbb{E}(Z_t^2)\le M$. So, $\mathbb{E}(Y_t-Z_t)^2$ goes to zero. We choose the random variable $Z_0$, which distribution function is $\gamma$, so tha the law of $Z_t$ is $\gamma$ for all $t$. So, $Y$ converges in $L^2$ to a random variable of law $\gamma$.
\end{proof}

\begin{lemma}\label{lemmeamontrer2}
For all $A>0$, we have $\underset{t\rightarrow\infty}{\lim} \sup\limits_{0\leq s\leq A} \left|\int_0^s \left(Y_{e^{t+u}} - \overline{\gamma} \right) \mathrm{d}u\right| = 0$ a.s.
\end{lemma}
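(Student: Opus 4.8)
The plan is to pass to the variable $v=e^{t+u}$ and then to compare $Y$ with the stationary Kolmogorov diffusion $Z$. Since $\mathrm{d}v=v\,\mathrm{d}u$ along $v=e^{t+u}$,
\[
\int_0^s\bigl(Y_{e^{t+u}}-\overline\gamma\bigr)\,\mathrm{d}u=\int_{e^t}^{e^{t+s}}\bigl(Y_v-\overline\gamma\bigr)\,\frac{\mathrm{d}v}{v},\qquad 0\le s\le A .
\]
I would run $Z$ with the same Brownian motion as $Y$ and start it from $Z_0\sim\gamma$, so that $Z$ is strictly stationary, and then split $Y_v-\overline\gamma=(Y_v-Z_v)+(Z_v-\overline\gamma)$, treating the two contributions separately and aiming for bounds uniform in $s\in[0,A]$.

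For the difference $Y-Z$, the estimate already used in the proof of Lemma~\ref{lem:Yproba}, made quantitative and combined with $\mathbb{E}Z_s^2\le M$, gives
\[
\mathbb{E}|Y_t-Z_t|^2\le C\,e^{-2\kappa t}+M\int_0^t e^{-2\kappa(t-s)}\,\frac{\mathrm{d}s}{r+s}=O(1/t)
\]
for a suitable $\kappa>0$ (split the last integral at $t/2$). Hence $\int_1^\infty\mathbb{E}|Y_t-Z_t|^2\,\frac{\mathrm{d}t}{t}<\infty$, so by Tonelli $\int_1^\infty|Y_v-Z_v|^2\,\frac{\mathrm{d}v}{v}<\infty$ almost surely. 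Cauchy--Schwarz then yields, uniformly for $s\in[0,A]$,
\[
\Bigl|\int_{e^t}^{e^{t+s}}(Y_v-Z_v)\,\frac{\mathrm{d}v}{v}\Bigr|\le A^{1/2}\Bigl(\int_{e^t}^{\infty}|Y_v-Z_v|^2\,\frac{\mathrm{d}v}{v}\Bigr)^{1/2}\xrightarrow[t\to\infty]{}0\quad\text{a.s.}
\]

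For the term $Z-\overline\gamma$, set $N(R):=\int_0^R(Z_v-\overline\gamma)\,\mathrm{d}v$; the ergodic theorem recalled just before the statement, applied to $h(x)=x$, gives $N(R)/R\to0$ a.s., and I put $\eta_t:=\sup_{v\ge e^t}|N(v)|/v\to0$ a.s. An integration by parts (integrating $\mathrm{d}N$ against $1/v$) gives
\[
\int_{e^t}^{e^{t+s}}(Z_v-\overline\gamma)\,\frac{\mathrm{d}v}{v}=\frac{N(e^{t+s})}{e^{t+s}}-\frac{N(e^t)}{e^t}+\int_{e^t}^{e^{t+s}}\frac{N(v)}{v^2}\,\mathrm{d}v,
\]
and since $e^{t+s}\le e^{t+A}$ the two boundary terms are each at most $\eta_t$ while the integral is at most $\eta_t\int_{e^t}^{e^{t+A}}\mathrm{d}v/v=A\,\eta_t$; thus this contribution is $\le(2+A)\eta_t\to0$ a.s., again uniformly in $s$. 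Adding the two estimates proves the lemma.

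The only genuinely delicate point is upgrading the $L^2$-convergence of $Y_t-Z_t$ furnished by Lemma~\ref{lem:Yproba} into an almost sure, uniform-in-$s$ bound; this is achieved through the explicit $O(1/t)$ rate (which the computation in that lemma readily gives once one keeps $\mathbb{E}Z_s^2\le M$, the cancellation of the Brownian parts in $Y_t-Z_t$ making the estimate clean) together with the elementary integrability and Cauchy--Schwarz device above. The remaining $Z$-term is then a routine consequence of the ergodic theorem and one integration by parts, all bounds being uniform over $s\in[0,A]$.
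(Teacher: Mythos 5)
Your proof is correct, but it organizes the comparison with the Kolmogorov process differently from the paper. The paper re-initializes the auxiliary diffusion at each time $T=e^t$, setting $Z_0^T=Y_T$ and working on the window $[T,Te^A]$; because $Y^T$ and $Z^T$ are driven by the same Brownian increments, the difference satisfies a pathwise Gronwall inequality, and combining it with the (exponential-rate) ergodic theorem for $\int_0^v (Z_u^T)^2\,\mathrm{d}u$ yields the uniform almost sure bound $(Y_v^T-Z_v^T)^2\le (\sigma_\infty+1)/(aT)$, hence a pointwise rate $O(T^{-1/2})$ for the term $I$; the term $J$ is then handled by integration by parts, the limit-quotient theorem and Ces\`aro. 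You instead keep a single stationary coupling $Z_0\sim\gamma$ from time $0$, prove only the $L^2$ rate $\mathbb{E}|Y_t-Z_t|^2=O(1/t)$ (which is exactly what the Gronwall computation of Lemma \ref{lem:Yproba} gives, under the same implicit hypothesis $c>\tilde C$ that the paper uses when it sets $C=2(c-\tilde C)>0$), and convert it into an almost sure statement via Tonelli applied to $\int_1^\infty|Y_v-Z_v|^2\,\mathrm{d}v/v$ followed by Cauchy--Schwarz on the tail; your treatment of the $Z$-term is the same integration by parts, but against the fixed stationary process. Your route buys something real: it avoids both the $T$-indexed family of restarted processes (for which the paper must apply the ergodic theorem to $Z^T$ as $T\to\infty$, a point it treats somewhat quickly) and the need for an almost sure exponential ergodic rate, replacing them with an elementary integrability device; the price is that you only control $|Y_v-Z_v|$ in an averaged sense rather than uniformly, which is nevertheless exactly what the lemma requires since the bound $\sqrt{A}\,\bigl(\int_{e^t}^{\infty}|Y_v-Z_v|^2\,\mathrm{d}v/v\bigr)^{1/2}$ is uniform in $s\in[0,A]$. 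The paper's pathwise estimate is stronger and would also give a rate of convergence, which your argument does not directly provide.
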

\begin{proof}
Fix $T>0$. We note $Y^T$ the process defined by $Y_s^T :=Y_{s+T}$. It is solution to 
$$\mathrm{d}Y_s^T = \mathrm{d}B_s^T - V'(Y_s^T) \mathrm{d}s - \frac{Y_s^T}{s+T} \mathrm{d}s.$$ Recall, that $\bar{\gamma} =\int_{\R} x\gamma(x) \mathrm{d}x$. Let $A>0$, we have for all $s\leq A$ and $T=e^t$:
\begin{eqnarray*}
\int_0^s \left(Y_{e^{t+u}} - \bar{\gamma} \right) \mathrm{d}u 
= \int_0^{T(e^s-1)} \frac{Y_{v+T}-Z_v^T}{v+T} \mathrm{d}v  + \int_0^{T(e^s-1)} \frac{Z_{v}^T-\bar{\gamma}}{v+T} =: I+J,
\end{eqnarray*}
where the process $Z^T$ is the solution to the SDE
\begin{equation*}
    \mathrm{d}Z_s^T = \mathrm{d}B_s^T - V'(Z_s^T) \mathrm{d}s, \, Z_0^T=Y_T.
\end{equation*}

1) Study of J. Integrating by part, we have with $S=e^s-1$,
\begin{eqnarray*}
J = \frac{TS}{TS+T} \left(\frac{1}{TS} \int_0^{TS} Z_u^T \mathrm{d}u - \bar{\gamma} \right) + \int_0^{TS} \frac{v}{(v+T)^2}\left(\frac{1}{v} \int_0^v Z_u^T \mathrm{d}u - \bar{\gamma} \right) \mathrm{d}v.
\end{eqnarray*}
The process $Z$ satisfies the limit-quotient theorem, so the first right-hand term converges a.s. to $0$ when $T$ goes to the infinity. For the second right-hand term we have
$$ \left| \int_0^{TS} \frac{v}{(v+T)^2} \left(\frac{1}{v} \int_0^v Z_u^T \mathrm{d}u - \bar{\gamma}\right) \mathrm{d}v\right|
\leq \frac{1}{T^2} \int_0^{TS} \left|\frac{1}{v} \int_0^v Z_u^T \mathrm{d}u - \bar{\gamma} \right| \mathrm{d}v.$$ By Ces\`aro, J converges to $0$ as $T$ goes to the infinity.

2) Study of I. We use the estimate of the distance between the processes $Y_v^T$ and $Z_v^T$ for $T$ large enough, as in Lemma \ref{lem:Yproba}. 
So, letting $C:=2(c-\tilde{C})>0$, we find the following upper bound
$$(Y_v^T-Z_v^T)^2 \leq e^{-Cv} \int_0^v \frac{e^{Cu}(Z_u^T)^2}{u+T} \mathrm{d}u.$$
Let $\sigma_\infty := \lim \frac{1}{t} \int_0^t (Z_u^T)^2 \mathrm{d}u$. Remind, that there exists $a>0$ such that the speed of convergence is less than $e^{-at}$, so that we have $$ \int_0^v
\frac{e^{Cu}(Z_u^T)^2}{u+T} \mathrm{d}u= \int_0^v
\frac{e^{Cu}\sigma_\infty}{u+T} \mathrm{d}u + \int_0^v \frac{e^{Cu}((Z_u^T)^2-\sigma_\infty)}{u+T} \mathrm{d}u =: K+L.$$ With probability 1, we obtain the upper bounds $|K| \leq \frac{e^{Cv}\sigma_\infty}{aT}$ and $|L| \leq \frac{e^{Cv}}{aT}$, implying $(Y_s^T - Z_s^T)^2 \leq \frac{\sigma_\infty +1}{aT}$ a.s. So, $|I|\le \frac{1}{\sqrt{T}}$ and $I$ converges to zero as $T$ goes to the infinity.
\end{proof}

We have know to study the asymptotic behavior of $\bar{\mu}_t$. Integrating by parts, we get
\begin{eqnarray}\label{formuleipp}
    \bar{\mu}_t = \bar{\mu}_0 + \int_0^t \frac{Y_s}{r+s}\mathrm{d}s =
    \bar{\mu}_0 + \frac{t}{r+t}m_t + \int_0^t \frac{s}{(r+s)^2}m_s \mathrm{d}s,
\end{eqnarray}
where $m_t:= \frac{1}{t}\int_0^t Y_s \mathrm{d}s$.

The proof of the main result of this section is based on the following:
\begin{propo}\label{lemmeamontrer}
The process $m_t$ converges in probability to $\overline{\gamma}$.
Moreover the speed of convergence of $m_t$ toward $\overline{\gamma}$ is less than $\frac{1}{t}$.
\end{propo}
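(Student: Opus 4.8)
The plan is to pass to the logarithmic time scale, on which Lemma~\ref{lemmeamontrer2} applies directly. Write $m_t=\frac1t\int_0^tY_s\,\mathrm{d}s$, put aside the harmless term $\frac1t\int_0^1(Y_s-\overline{\gamma})\,\mathrm{d}s=O(1/t)$, and substitute $s=e^u$ in the rest, so that with $\tau:=\log t$
\[
m_t-\overline{\gamma}=\frac1t\int_0^1(Y_s-\overline{\gamma})\,\mathrm{d}s+e^{-\tau}\int_0^\tau(Y_{e^u}-\overline{\gamma})e^u\,\mathrm{d}u .
\]
Fix $A>0$ and split the last integral at $\tau-A$. The point to watch is that Lemma~\ref{lemmeamontrer2} controls the \emph{unweighted} increments $\int_0^s(Y_{e^{t+u}}-\overline{\gamma})\,\mathrm{d}u$, while here the integrand carries the exponential weight $e^u$; this is reconciled by an integration by parts. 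Setting $G(u):=\int_{\tau-A}^u(Y_{e^v}-\overline{\gamma})\,\mathrm{d}v$ (so $G(\tau-A)=0$), one gets
\[
\int_{\tau-A}^\tau(Y_{e^u}-\overline{\gamma})e^u\,\mathrm{d}u=G(\tau)e^\tau-\int_{\tau-A}^\tau G(u)e^u\,\mathrm{d}u ,
\]
hence $e^{-\tau}\bigl|\int_{\tau-A}^\tau(Y_{e^u}-\overline{\gamma})e^u\,\mathrm{d}u\bigr|\le 2\sup_{[\tau-A,\tau]}|G|$. After the substitution $v=(\tau-A)+w$ the quantity $\sup_{[\tau-A,\tau]}|G|$ is exactly $\sup_{0\le s\le A}\bigl|\int_0^s(Y_{e^{(\tau-A)+u}}-\overline{\gamma})\,\mathrm{d}u\bigr|$, which by Lemma~\ref{lemmeamontrer2} tends to $0$ almost surely as $\tau\to\infty$ with $A$ fixed.

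For the remaining ``old'' part, undo the change of variables: $e^{-\tau}\bigl|\int_0^{\tau-A}(Y_{e^u}-\overline{\gamma})e^u\,\mathrm{d}u\bigr|\le|\overline{\gamma}|e^{-A}+e^{-A}\cdot\frac1{T'}\int_1^{T'}|Y_s|\,\mathrm{d}s$ with $T':=te^{-A}$. By Lemma~\ref{lem:Yproba} and its proof the second moments $\mathbb{E}(Y_s^2)$ are bounded uniformly in $s$, so $\mathbb{E}\bigl(\frac1{T'}\int_1^{T'}|Y_s|\,\mathrm{d}s\bigr)$ is bounded uniformly in $T'$; Markov's inequality then makes this term negligible in probability as soon as $A$ is large. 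Letting first $t\to\infty$ (the recent part and the $O(1/t)$ piece going to $0$) and then $A\to\infty$, we conclude that $m_t\to\overline{\gamma}$ in probability; in fact the same estimates give $\limsup_{t\to\infty}|m_t-\overline{\gamma}|\le C\,e^{-A}$ a.s. for every $A$, i.e. almost sure convergence.

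For the quantitative rate, couple $Y$ with a \emph{stationary} version $Z$ of the Kolmogorov diffusion of Lemma~\ref{lem:Yproba} (take $Z_0$ of law $\gamma$, so $Z_s\sim\gamma$ for all $s$), and split
\[
m_t-\overline{\gamma}=\frac1t\int_0^t(Y_s-Z_s)\,\mathrm{d}s+\frac1t\int_0^t(Z_s-\overline{\gamma})\,\mathrm{d}s .
\]
For the second term, solve the Poisson equation $\mathcal{L}^Z\phi=\mathrm{id}-\overline{\gamma}$ for $\mathcal{L}^Z=\frac12\partial_{xx}-V'\partial_x$; the exponential ergodicity of $Z$ (the limit--quotient theorem recalled above) provides such a $\phi$ with at most polynomial growth, and It\^o's formula gives $\int_0^t(Z_s-\overline{\gamma})\,\mathrm{d}s=\phi(Z_0)-\phi(Z_t)+\int_0^t\phi'(Z_s)\,\mathrm{d}B_s$, whose boundary part is $O(1)$ and whose martingale part has zero mean, so on average this term is $O(1/t)$. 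For the first term, note that $Y-Z$ solves a \emph{pathwise} ODE (no Brownian term): $\frac{\mathrm{d}}{\mathrm{d}s}(Y_s-Z_s)=-(V'(Y_s)-V'(Z_s))-\frac{Y_s}{r+s}$, so that $|Y_s-Z_s|\le|Y_0-Z_0|e^{-\kappa s}+\kappa^{-1}\int_0^s e^{-\kappa(s-u)}\frac{|Y_u|}{r+u}\,\mathrm{d}u$ for the contraction constant $\kappa>0$ furnished by the differential inequality in the proof of Lemma~\ref{lem:Yproba}; integrating and using $\mathbb{E}|Y_u|=O(1)$ bounds $\mathbb{E}\int_0^t|Y_s-Z_s|\,\mathrm{d}s$ by a constant times $\int_0^t\frac{\mathrm{d}u}{r+u}$, hence $\frac1t\int_0^t(Y_s-Z_s)\,\mathrm{d}s$ by essentially $1/t$. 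Putting the two pieces together yields the announced $1/t$-type control on $m_t-\overline{\gamma}$. This last step — carrying the sharp ergodic rate of the reference diffusion $Z$ through the coupling — is the main obstacle; and it is precisely this bound, rather than the bare convergence of $m_t$, that will be needed in the next section to make $\int_0^t\frac{s}{(r+s)^2}m_s\,\mathrm{d}s$ converge in \eqref{formuleipp}, and thereby settle the convergence of $\overline{\mu}_t$ when $\int x\,\mathrm{d}\gamma=0$.
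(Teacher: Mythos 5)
Your first part (convergence in probability) is correct and, despite the different packaging, rests on exactly the same ingredients as the paper: Lemma \ref{lemmeamontrer2} controls the ``recent'' increments on the logarithmic time scale, and the exponential weight $e^{u-\tau}$ kills the ``old'' part. The paper phrases this by observing that $n_t=m_{e^t}$ solves $\dot n_t=Y_{e^t}-n_t$ and is therefore an almost sure asymptotic pseudotrajectory of the flow $\dot\psi=\overline{\gamma}-\psi$, whose unique fixed point is exponentially attracting; your recent/old splitting with the factor $e^{-A}$ is precisely that exponential attraction, carried out by hand. Your version is more self-contained (no appeal to the pseudotrajectory machinery) at the cost of the explicit integration by parts; either way the key input is Lemma \ref{lemmeamontrer2}. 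One small over-claim: your parenthetical upgrade to almost sure convergence needs $\limsup_{T}\frac{1}{T}\int_1^{T}|Y_s|\,\mathrm{d}s<\infty$ almost surely, which your Markov-inequality bound does not give; but in-probability convergence is all that is asserted.

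The rate argument is where there is a genuine gap. In your Poisson-equation decomposition $\int_0^t(Z_s-\overline{\gamma})\,\mathrm{d}s=\phi(Z_0)-\phi(Z_t)+\int_0^t\phi'(Z_s)\,\mathrm{d}B_s$, the statement ``the martingale part has zero mean, so on average this term is $O(1/t)$'' is not a bound on the speed of convergence: having zero expectation says nothing about the size of the fluctuation, and here $\mathbb{E}\bigl(\frac{1}{t}\int_0^t\phi'(Z_s)\,\mathrm{d}B_s\bigr)^2=\frac{1}{t^2}\int_0^t\mathbb{E}\,\phi'(Z_s)^2\,\mathrm{d}s$ is of order $1/t$, so this term is $O(t^{-1/2})$ in $L^2$ and in probability --- indeed the CLT for additive functionals of $Z$ shows $t^{-1/2}$ is its exact order. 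Your coupling term is likewise only $O(\log t/t)$ (since $\int_0^t\frac{\mathrm{d}u}{r+u}\sim\log t$), not $O(1/t)$. So what your argument actually delivers is a rate $O(t^{-1/2})$, not the announced $1/t$. This is still amply sufficient for the intended application, namely the absolute convergence of $\int_0^\infty\frac{s}{(r+s)^2}\,|m_s-\overline{\gamma}|\,\mathrm{d}s$ in \eqref{formuleipp}, which only requires $|m_s-\overline{\gamma}|=O(s^{-\epsilon})$ for some $\epsilon>0$; but the step as written does not prove the $1/t$ claim. (For what it is worth, the paper's own printed proof establishes only the almost sure convergence of $n_t$ via the pseudotrajectory argument and does not actually justify the stated speed either, so your attempt to supply a quantitative bound goes beyond it --- it just lands at $t^{-1/2}$ rather than $t^{-1}$.)
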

\begin{proof}
As $\mathrm{d}m_t=\frac{1}{t}(Y_t-m_t)\mathrm{d}t$, letting $n_t:= m_{e^t}$, we get $\frac{\mathrm{d}}{\mathrm{d}t} n_t =Y_{e^t} - n_t$. Consequently, we find
\begin{eqnarray*}
n_{t+s}-n_t = \int_t^{t+s} (Y_{e^u} - n_u)\, \mathrm{d}u = \int_0^s (-n_{t+u} + \bar{\gamma})\, \mathrm{d}u + \int_0^s (Y_{e^{t+u}} - \bar{\gamma})\, \mathrm{d}u.
\end{eqnarray*}
Let $\varepsilon_t(s) := \int_0^s (Y_{e^{t+u}} - \bar{\gamma})\, \mathrm{d}u$. 
By Lemma \ref{lemmeamontrer2}, for all $A>0$ we have $\lim\limits_{t\to\infty}\sup\limits_{0\leq s\leq A} |\varepsilon_t(s)| = 0$ a.s., so $n_t$ is an asymptotic pseudotrajectory (a.s.) for the flow generated by $$\frac{\mathrm{d}\psi_t(x)}{\mathrm{d}t}= \bar{\gamma} - \psi_t(x), \; \; \psi_0(x)=x.$$ As this flow admits only one limit point which is exponentially attracted, the $\omega$-limit set of $n_t$ is reduced to $\{\bar{\gamma}\}$. So, $m_t=n_{\log t}$ converges a.s. to $m_\infty := \bar{\gamma}$.
\end{proof}

\begin{coro}
\begin{enumerate}
    \item If $\overline{\gamma}=0$, then $\bar{\mu}_t$ converges in probability as $t$ goes to infinity.
    \item If $\overline{\gamma}\neq 0$, then $\bar{\mu}_t$ diverges and $\lim \frac{\bar{\mu}_t}{\log t} = \overline{\gamma}$ (in probability).
\end{enumerate}
\end{coro}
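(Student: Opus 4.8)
The plan is to extract both statements from the integration-by-parts identity \eqref{formuleipp},
\[
\bar{\mu}_t = \bar{\mu}_0 + \frac{t}{r+t}\,m_t + \int_0^t \frac{s}{(r+s)^2}\,m_s\,\mathrm{d}s ,
\]
together with Proposition \ref{lemmeamontrer}. Write $m_s = \overline{\gamma} + \rho_s$; by Proposition \ref{lemmeamontrer} we have $\rho_s \to 0$ a.s.\ and, for $s$ large, $|\rho_s| \le C/s$ with $C$ an a.s.\ finite random constant. Two elementary facts about the weight $w(s) := s/(r+s)^2$ will be used: first, $\int_0^t w(s)\,\mathrm{d}s = \log(r+t) - \log r - 1 + \tfrac{r}{r+t} = \log t + O(1)$ as $t\to\infty$; second, since $w(s) \le 1/(r+s) \le 1/s$, the function $s \mapsto w(s)\,|\rho_s|$ is integrable on $(0,\infty)$ (it is bounded, hence integrable, on any $[0,s_0]$ because $m$ is continuous, and is $\le C/s^2$ for $s$ large). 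Consequently $\int_0^t w(s)\,\rho_s\,\mathrm{d}s$ converges as $t\to\infty$ to a finite random limit and is in particular bounded in $t$.

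Case $\overline{\gamma}\neq 0$. In \eqref{formuleipp} the term $\bar{\mu}_0$ is constant, $\frac{t}{r+t}m_t \to \overline{\gamma}$ is bounded, and $\int_0^t w(s)\,m_s\,\mathrm{d}s = \overline{\gamma}\int_0^t w(s)\,\mathrm{d}s + \int_0^t w(s)\,\rho_s\,\mathrm{d}s = \overline{\gamma}\log t + O(1)$. Dividing by $\log t$ and letting $t\to\infty$ gives $\bar{\mu}_t/\log t \to \overline{\gamma}$ a.s.\ (hence in probability), and since $\overline{\gamma}\neq 0$ this forces $|\bar{\mu}_t|\to\infty$, i.e.\ $\bar{\mu}_t$ diverges. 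For this case the quantitative bound on $\rho_s$ is not even needed: $m_s\to\overline{\gamma}$ alone, via the Ces\`aro/Toeplitz lemma applied to $w$ with $\int_0^t w \to \infty$, already yields $\int_0^t w(s)\,m_s\,\mathrm{d}s \sim \overline{\gamma}\log t$.

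Case $\overline{\gamma}= 0$. Now $m_s = \rho_s$, the logarithmic term disappears, $\frac{t}{r+t}m_t \to 0$, and $\int_0^t w(s)\,m_s\,\mathrm{d}s$ converges by the integrability noted above. Hence $\bar{\mu}_t \to \bar{\mu}_0 + \int_0^\infty \frac{s}{(r+s)^2}\,m_s\,\mathrm{d}s$ a.s., hence in probability. The one substantive ingredient is thus the decay rate of $m_s - \overline{\gamma}$: any polynomial rate $s^{-\alpha}$ with $\alpha > 0$ makes $w(s)|m_s-\overline{\gamma}|$ integrable, and this is precisely what the ``speed less than $1/t$'' part of Proposition \ref{lemmeamontrer} provides --- ultimately coming, through Lemma \ref{lemmeamontrer2} and the exponential contraction of the flow $\dot\psi_t = \overline{\gamma} - \psi_t$, from a deterministic exponential rate for $n_t = m_{e^t}$. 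Everything else is bookkeeping with the asymptotics of $w$, so I expect no real obstacle beyond keeping track of which contributions are $O(1)$ and which are of order $\log t$.
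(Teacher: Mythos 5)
Your proof is correct and follows essentially the same route as the paper: the authors simply display the recentred identity $\bar{\mu}_t = \bar{\mu}_0 + \frac{t}{r+t}(m_t-\overline{\gamma}) + \int_0^t \frac{s}{(r+s)^2}(m_s-\overline{\gamma})\,\mathrm{d}s + \overline{\gamma}\log(1+t/r)+O(1)$ and let Proposition \ref{lemmeamontrer} do the rest, which is exactly your decomposition $m_s=\overline{\gamma}+\rho_s$ with the $1/s$ rate guaranteeing integrability of the error term against $s/(r+s)^2$. You have merely written out the bookkeeping the paper leaves implicit.
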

\begin{proof}
By \eqref{formuleipp}, we get $$\bar{\mu}_t = \bar{\mu}_0 + \frac{t}{r+t}(m_t-\overline{\gamma}) + \int_0^t \frac{s}{(r+s)^2}(m_s-\overline{\gamma}) \mathrm{d}s + \overline{\gamma}\left(\log(1+t/r) -r\right).\qedhere$$
\end{proof}

Summarizing, we have now proved the following.
\begin{theo}
One of the following holds:
\begin{enumerate}
    \item If $\overline{\gamma} =0$, then $\bar{\mu}_t$ converges in probability to $\bar{\mu}_\infty$ and $X_t$ converges in probability to $Y_\infty +\bar{\mu}_\infty$, where the law of $Y_\infty$ has the density $\gamma$ ;
    \item Else, $X_t$ diverges.
\end{enumerate}
\end{theo}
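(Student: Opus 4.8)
The plan is simply to assemble the results of this section. Everything rests on the decomposition $X_t = Y_t + \overline{\mu}_t$ together with the separate analyses of the two summands: Lemma~\ref{lem:Yproba} controls $Y_t$, while Proposition~\ref{lemmeamontrer} and the corollary immediately following it control $\overline{\mu}_t$. I would feed these into an elementary convergence argument, noting that no independence between $Y_t$ and $\overline{\mu}_t$ is needed: convergence in probability of each of two sequences of $\mathbb{R}^d$-valued random variables forces convergence in probability of their sum (and more generally one may invoke Slutsky, exactly as in Section~4).

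First I would treat the case $\overline{\gamma}=0$. By Lemma~\ref{lem:Yproba}, $Y_t \xrightarrow{\mathbb{P}} Y_\infty$ with $Y_\infty$ of density $\gamma$. By the corollary above --- which follows from \eqref{formuleipp} and Proposition~\ref{lemmeamontrer}, since for $\overline{\gamma}=0$ the term $\overline{\gamma}(\log(1+t/r)-r)$ disappears, $\tfrac{t}{r+t}(m_t-\overline{\gamma})\xrightarrow{\mathbb{P}}0$, and $\int_0^t \tfrac{s}{(r+s)^2}(m_s-\overline{\gamma})\,\mathrm{d}s$ converges because the rate $m_s-\overline{\gamma}=O(1/s)$ makes the integrand $O(1/(r+s)^2)$ --- there is a random variable $\overline{\mu}_\infty$ with $\overline{\mu}_t \xrightarrow{\mathbb{P}} \overline{\mu}_\infty$. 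Adding the two limits gives $X_t = Y_t + \overline{\mu}_t \xrightarrow{\mathbb{P}} Y_\infty + \overline{\mu}_\infty$, which is the first alternative.

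Next I would treat the case $\overline{\gamma}\neq 0$. The same corollary gives $\overline{\mu}_t/\log t \xrightarrow{\mathbb{P}} \overline{\gamma}\neq 0$, hence $|\overline{\mu}_t|\xrightarrow{\mathbb{P}}+\infty$. Since $Y_t$ converges in probability (Lemma~\ref{lem:Yproba}) the family $(Y_t)$ is tight, so $|X_t|\ge |\overline{\mu}_t|-|Y_t|\xrightarrow{\mathbb{P}}+\infty$ and $X_t$ cannot converge in distribution to any $\mathbb{R}^d$-valued limit; that is, $X_t$ diverges. The only genuinely delicate ingredient is already hidden in Proposition~\ref{lemmeamontrer} and Lemma~\ref{lemmeamontrer2}: one needs $m_t$ to approach $\overline{\gamma}$ at rate $O(1/t)$, fast enough for the integral term in \eqref{formuleipp} to converge in the symmetric case; without that rate one would only obtain $\overline{\mu}_t/\log t\to\overline{\gamma}$ and not the convergence of $\overline{\mu}_t$ itself. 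Given those inputs, the theorem is pure bookkeeping.
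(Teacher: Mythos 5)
Your proposal is correct and follows exactly the route the paper intends: the paper offers no separate argument for this theorem beyond the word ``Summarizing,'' since it is precisely the combination of Lemma~\ref{lem:Yproba}, Proposition~\ref{lemmeamontrer}, the corollary on $\bar{\mu}_t$, and the decomposition $X_t = Y_t + \overline{\mu}_t$. Your added observations --- that sums of sequences converging in probability converge in probability without any independence, and that the $O(1/t)$ rate for $m_t - \overline{\gamma}$ is what makes the integral term in \eqref{formuleipp} converge --- are exactly the right points to make explicit.
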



\section{Appendix}
In Section \ref{s:asympY}, the optimal annealing schedule is not obtained. Actually, $k$ should be directly related to $m$, as Holley, Kusuoka and Stroock \cite{HKS} proved it for $a(t) \equiv \infty$. Let $z_0\in
\mathbb{R}^d$ such that $V_t(z_0) =0$. Let $K$ be the compact support of $\chi$.
\begin{defn}
The maximal height of the function $V_t$ is the non-negative
function $m(t)$ defined by
\begin{equation}
m(t) := \sup\{H_t(x,z_0) - V_t(x);\quad x\in K\},
\end{equation}
where $H_t(x,z) := \inf\{E_t(\gamma);\, \gamma \in
\mathcal{C}^1([0,1],K), \gamma(0)=x, \gamma(1)=z\}$ and

\noindent $E_t(\gamma) := \sup\{V_t(\gamma(u));\, u\in [0,1]\}.$
\end{defn}
Remark, that $m(t)$ does not depend on $z_0$ and so, we choose $z_0 =0$. The function $m(t)$ corresponds to the maximum of all the minimal energies one needs to go from each point of $\mathbb{R}^d$ to $z_0$. It is positive if and only if there exists several local minima.

\begin{lemma}
We have that $\underset{t\rightarrow \infty}{\lim} m(t) = m$, where $m$ is the maximal height function corresponding to $V$.
\end{lemma}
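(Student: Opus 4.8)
The plan is to show $m(t)\to m$ by directly comparing, for each fixed pair of points, the energy functionals $E_t$ and the limiting functional $E_\infty$ built from $V$, and then passing to the supremum. First I would fix the support $K$ of $\chi$ and recall that
\[
V_t(x) = V(x) + \frac{|x|^2}{a(t)},
\]
where $a(t) = (r+G^{-1}(t))\varepsilon^{-2}(t) = (r+G^{-1}(t))\,g\circ G^{-1}(t)$. Under the running hypothesis that $g\circ G^{-1}(t)$ is (asymptotically) of order $\log(1+t)$, one has $G^{-1}(t)\to\infty$, hence $a(t)\to\infty$, and therefore the perturbation $|x|^2/a(t)$ converges to $0$ \emph{uniformly on the compact set $K$}: there is $\delta(t) := \sup_{x\in K}|x|^2/a(t)\to 0$. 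Since every curve $\gamma\in\mathcal{C}^1([0,1],K)$ takes values in $K$, we get the two-sided bound
\[
E_\infty(\gamma) \;\le\; E_t(\gamma) \;\le\; E_\infty(\gamma) + \delta(t),
\]
where $E_\infty(\gamma) := \sup_{u\in[0,1]} V(\gamma(u))$.

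Next I would take the infimum over admissible curves joining $x$ to $z_0=0$: the bound above being uniform in $\gamma$, it is preserved by the infimum, so
\[
H_\infty(x,0) \;\le\; H_t(x,0) \;\le\; H_\infty(x,0) + \delta(t),
\]
with $H_\infty$ the quasi-potential associated with $V$. Then I subtract $V_t(x) = V(x) + |x|^2/a(t)$, which also differs from $V(x)$ by at most $\delta(t)$ uniformly on $K$, obtaining
\[
\bigl| \bigl(H_t(x,0) - V_t(x)\bigr) - \bigl(H_\infty(x,0) - V(x)\bigr) \bigr| \;\le\; 2\delta(t)
\]
for every $x\in K$. Taking the supremum over $x\in K$ on both sides and using $\bigl|\sup f - \sup g\bigr| \le \sup|f-g|$ gives $|m(t) - m| \le 2\delta(t) \to 0$, which is the claim. (One should note that $z_0$ was chosen so that $V_t(z_0)=0$, i.e. $z_0$ is a global minimum of $V$ with $V(z_0)=0$, which is consistent in the limit since $\min V = 0$ by the positivity hypothesis and the remark that $m(t)$ does not depend on the choice of $z_0$.)

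The routine parts here are the manipulations with suprema and infima; the only genuine point requiring care — and the step I expect to be the main obstacle — is justifying that the quadratic confinement term is negligible \emph{uniformly}, i.e. that all the relevant infima and suprema can be restricted to the fixed compact $K$ (the support of $\chi$) so that $\delta(t)$ is well-defined and tends to $0$. This is exactly why the definition of $m(t)$ restricts the curves and the outer supremum to $K$: outside $K$ the potential $V=W$ is strictly convex with no spurious critical points, so no curve needs to leave $K$ to realize the relevant minimal-energy passages, and the same holds in the limit; one should spell out this reduction, invoking the convexity hypothesis on $W$ and the finiteness of the critical set, before the uniform estimate above can be applied. Once that reduction is in place, the convergence $a(t)\to\infty$ (guaranteed by the standing assumption $g\circ G^{-1}(t)\sim k\log(1+t)$) closes the argument.
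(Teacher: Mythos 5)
Your proposal is correct and follows essentially the same route as the paper: bound the perturbation $|x|^2/a(t)$ uniformly on the compact $K$ by $M/a(t)$, propagate this bound through $E_t$, then $H_t$, and finally the supremum defining $m(t)$, using $a(t)\to\infty$. The paper's proof is just a terser version of the same argument, concluding $|m(t)-m(\infty)|\le C/a(t)$.
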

\begin{proof}
Let $M := \sup \{|x|^2; x\in K\}$. For all path $\gamma$, we have clearly $E_t(\gamma) \leq E(\gamma) +
\frac{M}{a(t)}$. Then, by definition we get $|H_t(x,0)-H_\infty(x,0)| \leq \frac{M}{a(t)}.$ Consequently, there exists $C>0$ such that $|m(t) - m(\infty)| \leq \frac{C}{a(t)}.$ 
\end{proof}
Jacquot relates in \cite{J} the height function to the second eigenvalue of the infinitesimal generator of
$Y^\varepsilon$ (that is the constant involved in the spectral gap inequality). He proves that $\underset{\varepsilon \rightarrow 0}{\lim} \varepsilon^2 \log \lambda_2(\infty,\varepsilon)= -2m(\infty)$. So, the ``critical'' value of $k$ should be $m$ instead of $\osc \chi$.

\bigskip

\noindent \textbf{S\'ebastien Chambeu}: Laboratoire de mod\'elisation
stochastique et statistique, Universit\'e Paris Sud, B\^atiment 425,
F-91405 Orsay Cedex, France.  Sebastien.Chambeu@math.u-psud.fr

\noindent \textbf{Aline Kurtzmann}: Institut Elie Cartan,
Universit\'e Henri Poincar\'e Nancy 1, B.P.239,  
F-54506 Vand\oe uvre-l\`es-Nancy Cedex, France. Aline.Kurtzmann@iecn.u-nancy.fr
\end{document}